\numberwithin{equation}{section}
\theoremstyle{plain}
\newtheorem{thm}{Theorem}[section]
\newtheorem{lem}[thm]{Lemma}
\newtheorem{pps}[thm]{Proposition}
\theoremstyle{definition}
\newenvironment{rmk}
  {\pushQED{\qed}\renewcommand{\qedsymbol}{$\triangle$}\rmkx}
  {\popQED\endrmkx}
\newtheorem{prob}[thm]{Problem}
\declaretheoremstyle[
  spaceabove=-6pt,
  spacebelow=6pt,
  headfont=\normalfont\bfseries,
  postheadspace=1em,
  qed=\qedsymbol,
  headpunct={}
]{mystyle} 
\declaretheorem[name={proofof},style=mystyle,unnumbered,
]{proofof}  
\renewenvironment{proofof}[1] {\par\pushQED{\qed}\normalfont\topsep6\p@\@plus6\p@\relax\trivlist  \item[\hskip\labelsep
        \bfseries
    Proof of #1.]\ignorespaces}{\popQED\endtrivlist\@endpefalse}
\newcommand{\thickhline}{
    \noalign {\ifnum 0=`}\fi \hrule height 1pt
    \futurelet \reserved@a \@xhline
}
\newcolumntype{"}{@{\hskip\tabcolsep\vrule width 1pt\hskip\tabcolsep}}
\renewenvironment{proof}[1][\proofname] {\par\pushQED{\qed}\normalfont\topsep6\p@\@plus6\p@\relax\trivlist\item[\hskip\labelsep\bfseries#1\@addpunct{.}]\ignorespaces}{\popQED\endtrivlist\@endpefalse}
\renewcommand{\@secnumfont}{\bfseries}
\patchcmd{\section}{\scshape}{\bf}{}{}
\patchcmd{\subsection}{\scshape}{\bf}{}{}
\patchcmd{\subsubsection}{\scshape}{\bf}{}{}
\newcommand{\N}{\mathbb{N}} 
\newcommand{\Z}{\mathbb{Z}}
\newcommand{\Q}{\mathbb{Q}} 
\newcommand{\K}{\mathbb{K}} 
\newcommand{\F}{\mathbb{F}}
\newcommand{\laurent}[1]{#1[t,t^{-1}]} 
\newcommand{\OS}{\mathcal{O}_S} 
\newcommand{\anel}[1]{#1_{\mathrm{ring}}} 
\newcommand{\addi}[1]{#1_{\mathrm{add}}} 
\newcommand{\ri}{\mathcal{O}} 
\DeclareMathOperator{\GL}{GL} 
\DeclareMathOperator{\SL}{SL} 
\newcommand{\eij}{e_{i,j}} 
\newcommand{\di}{d_i} 
\newcommand{\dk}[1]{d_{#1}} 
\newcommand{\ekl}[1]{e_{#1}} 
\newcommand{\sbgpeij}{\mathcal{E}_{i,j}} 
\newcommand{\mbU}{\mathbf{U}} 
\newcommand{\mbD}{\mathbf{D}} 
\newcommand{\PGL}{\mathbb{P}\mathrm{GL}}
\newcommand{\Mult}{\mathbb{G}_{m}} 
\newcommand{\Addi}{\mathbb{G}_{a}} 
\newcommand{\Aff}{\mathbb{A}\mathrm{ff}} 
\newcommand{\mbB}{\mathbf{B}} 
\newcommand{\mbG}{\mathbf{G}} 
\newcommand{\PB}{\mathbb{P}\mathbf{B}} 
\newcommand{\PBplus}{\mathbb{P}B^{+}} 
\newcommand{\phee}{\varphi} 
\newcommand{\veps}{\varepsilon} 
\newcommand{\barra}[1]{\overline{#1}}
\newcommand{\til}[1]{\widetilde{#1}}
\newcommand{\gera}[1]{\langle {#1} \rangle}
\newcommand{\set}[1]{\{ #1 \}}
\newcommand{\mb}{\mathbf}
\newcommand{\mc}{\mathcal}
\newcommand{\into}{\hookrightarrow}
\newcommand{\onto}{\twoheadrightarrow}
\newcommand{\nsgp}{\trianglelefteq}
\newcommand{\Ri}{R_{\infty}} 
\newcommand{\RSpec}{\mathrm{RSpec}}
\newcommand{\FPn}[1]{\mathrm{FP}_{#1}}
\newcommand{\bref}[1]{{\bf(\ref{#1})}} 
\newcommand{\dref}[2]{\ref{#1}{\bf(\ref{#2})}} 
\DeclareMathOperator{\End}{End}
\DeclareMathOperator{\Aut}{Aut}
\DeclareMathOperator{\Inn}{Inn}
\DeclareMathOperator{\im}{Im}
\DeclareMathOperator{\carac}{char}
\DeclareMathOperator{\id}{id}
\title[Reidemeister numbers for soluble arithmetic groups in type A]{Reidemeister numbers for arithmetic Borel subgroups in type A}
\author{Paula Macedo Lins de Araujo and Yuri Santos Rego}
\address{Katholieke Universiteit Leuven, \newline 
Wiskunde, Campus Kulak, \newline 
Etienne Sabbelaan 53, bus 7657, \newline 
8500 Kortrijk, Belgi\"e}
\curraddr{University of Lincoln, 
Charlotte Scott Centre for Algebra, \newline 
Isaac Newton Building, Brayford Pool, LN6 7TS, Lincoln, United Kingdom}
\email{pmacedolinsdearaujo@lincoln.ac.uk}
\address{Otto-von-Guericke-Universit\"at Magdeburg, \newline 
Fakult\"at f\"ur Mathematik -- Institut f\"ur Algebra und Geometrie, \newline 
Postfach 4120, 39016 Magdeburg, Deutschland}
\email{ysantosrego@lincoln.ac.uk}
\curraddr{University of Lincoln, 
Charlotte Scott Centre for Algebra, \newline 
Isaac Newton Building, Brayford Pool, LN6 7TS, Lincoln, United Kingdom}
\subjclass[2020]{20E36, 20F16, 20G30}
\keywords{Reidemeister numbers, soluble groups, $S$-arithmetic groups, property $R_\infty$, upper triangular matrices.}
\begin{document}
\begin{abstract}
The Reidemeister number $R(\varphi)$ of a group automorphism $\varphi \in \mathrm{Aut}(G)$ encodes the number of orbits of the $\varphi$-twisted conjugation action of $G$ on itself, and the Reidemeister spectrum of $G$ is defined as the set of Reidemeister numbers of all of its automorphisms. We obtain a sufficient criterion for some groups of triangular matrices over integral domains to have property~$R_\infty$, which means that their Reidemeister spectrum equals $\{\infty\}$. Using this criterion, we show that Reidemeister numbers for certain soluble $S$-arithmetic groups behave differently from their linear algebraic counterparts --- contrasting with results of Steinberg, Bhunia, and Bose.  
\end{abstract}
\maketitle \vspace{-1.0cm}
\thispagestyle{empty}

\section{Introduction} 
Given a group~$G$ and an automorphism $\phee\in\Aut(G)$, the $\phee$\emph{-(twisted) conjugacy class} of $g\in G$ is the set of elements that are $\phee$-conjugate to $g$, 
\[ [g]_\phee = \set{h g \phee(h)^{-1} \mid h \in G}. \]
The \emph{Reidemeister number} of $\phee$, denoted by $R(\phee)$, is the total number of $\phee$-conjugacy classes (also called {Reidemeister classes} of $\phee$). In particular, the \emph{class number} of a group is just $R(\mathrm{id})$. 
The \emph{Reidemeister spectrum} $\RSpec(G)$ of the group $G$ is defined as the set 
\[\RSpec(G)= \{R(\phee) \mid \phee \in \Aut(G) \} \subseteq \Z_{\geq 1} \cup \{\infty\}\] 
of all possible Reidemeister numbers for $G$. Following Taback and Wong~\cite{TabackWong0}, we say that $G$ has \emph{property}~$\Ri$ when $\RSpec(G) = \{\infty\}$.

A main reason for interest in the question whether a group does or does not have $\Ri$ is the connection with fixed point statements in different areas; we refer the reader, e.g., to \cite[Theorem~6.1]{DacibergWongCrelle}, \cite[Theorem~10.1]{SteinbergEndo}, and \cite[Theorem~5.4]{Brasuca0} for examples in algebraic topology, Lie theory, and cohomology, respectively. 

Our first main theorem concerns the behavior of property~$\Ri$ within the family of $S$-arithmetic groups, so we now recall their definition. Let $\mbB$ be a linear algebraic group defined over a global field $\K$. A group $\Gamma$ is called an $S$-arithmetic subgroup of $\mbB(\K)$ if $\Gamma$ is commensurable with the subgroup $\rho^{-1}(\GL_n(\OS)) \leq \mbB(\K)$ for a ring of $S$-integers $\OS \subset \K$ and some faithful $\K$-representation $\rho : \mbB \into \GL_n$. 
The reader is referred to the classics \cite{Margulis,PlatonovRapinchuk} for major results and literature on such groups. We briefly mention that the family of $S$-arithmetic groups includes important familiar objects, such as finitely generated free groups, crystallographic groups, and many lattices in products of (real, complex, and $p$-adic) semisimple Lie groups.

Given $n\geq 2$, let $\PB_n$ denote the Borel subgroup of (upper) triangular matrices in the split semisimple group $\PGL_n$ of type $\mathtt{A}_{n-1}$. 
Our first theorem below answers negatively the arithmetic analoga of two questions arising from the study of Reidemeister spectra of connected soluble linear algebraic groups, initiated by Steinberg~\cite{SteinbergEndo} and continued recently by Bhunia and Bose \cite{BhuniaBose1,BhuniaBose2}; see below for the questions and a more thorough discussion.
%
%

\begin{thm}\label{thm:Lieapplication}
For each characteristic $p$ (zero or prime), there is a global field $\K_p$ of characteristic $\carac(\K_p)=p$ and $S$-arithmetic subgroups $\Gamma_{n,p} \leq \PB_n(\K_p)$ that satisfy the following: 
\begin{enumerate}
		\item $\RSpec(\Gamma_{2,p})$ contains some $m \notin \{1,\infty\}$. In particular, $\Gamma_{2,p}$ 
		does not have property~$R_\infty$, but 
		\item $\RSpec(\Gamma_{n,p}) = \{\infty\}$ for all $n \geq 4$.
\end{enumerate}
If $p \geq 5$, the groups $\Gamma_{n,p}$ can be chosen to be finitely generated. (If $\mathrm{char}(\K_{p}) = p = 0$, the groups $\Gamma_{n,p}$ are always finitely presented.) 

In case $p \in \{2,3\}$, one may take global fields $\K_{2,p}  \neq \K_{p}$ of characteristic $p$ and finitely generated $S$-arithmetic subgroups $\til{\Gamma}_{2,p} \leq \PB_2(\K_{2,p})$ and $\til{\Gamma}_{n,p} \leq \PB_n(\K_p)$, $n \geq 3$, satisfying~(i) and~(ii) above.
\end{thm}

Two points in the statement stand out. Firstly, the slight distinction in characteristics $p=2$ or $3$ to obtain finitely generated groups with the prescribed properties. Here, the original finitely generated candidates $\Gamma_{2,2} \leq \PB_2(\K_2)$ and $\Gamma_{2,3} \leq \PB_2(\K_3)$ actually have property~$R_\infty$, which we overcome by slightly changing the field; see the discussion at the end of Section~\ref{sec:backtoarithmetic}. 
Secondly, the omission of the case $n=3$ in Theorem~\ref{thm:Lieapplication} is an artifact of its proof, which relies on our other main theorem. Namely, we establish a criterion for detecting property $R_\infty$ in linear groups of upper triangular matrices of dimension at least four by reducing complexity of which automorphisms to look at. 
We now describe the matrix groups to which this tool applies.

%

%

For $n \geq 2$, let $\mbB_n \leq \GL_n$ denote the $\Z$-subscheme of invertible upper triangular matrices. For a (commutative) ring $R$ (with unity $1 \neq 0$), we work concretely with the groups of $R$-points 
\[ 
\mbB_n(R) = \left( \begin{smallmatrix} * & * & \cdots & * \\  & * & & \\ & & \ddots & \vdots \\ & & & * \\  \end{smallmatrix} \right) \leq \GL_n(R).
\]
We also consider their projective variants $\PB_n(R)$ obtained by modding out scalar matrices. Our main technical result is as follows.

\begin{thm} \label{thm:Additive}
Suppose $R$ is an integral domain with finitely generated group of units. Given a ring automorphism $\alpha \in \anel{\Aut}(R)$, write 
\begin{itemize}
	\item $\addi{\alpha}$ for the same map seen as an automorphism of $(R,+)$, and 
	\item $\tau_\alpha \in \Aut( R\times R)$ for the induced `flip' automorphism $\tau_\alpha((r,s)) = (\addi{\alpha}(s),\addi{\alpha}(r))$.
\end{itemize} If $R(\addi{\alpha}) = \infty = R(\tau_\alpha)$ for every $\alpha \in \anel{\Aut}(R)$, then $G(R)$ has property~$\Ri$ for all $G=\mbB_n(R)$ and all $G=\PB_n(R)$ with $n \geq 4$.
\end{thm}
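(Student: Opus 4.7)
The plan is to combine a classification of $\Aut(G(R))$ with the standard quotient inequality $R(\varphi) \geq R(\bar\varphi)$ (which holds whenever $\bar\varphi$ is induced on a $\varphi$-invariant quotient), and to exhibit a characteristic quotient of $G(R)$ isomorphic to $(R,+)^{n-1}$ on which $\varphi$ acts, up to inner automorphism, through either $\addi{\alpha}$ or $\tau_\alpha$ for some $\alpha \in \anel{\Aut}(R)$. The hypothesis then forces $R(\varphi) = \infty$.

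The first reduction rests on an automorphism classification for Borel-type matrix groups in the spirit of Dieudonn\'e, Cao, and Hahn: for $n \geq 4$ and $R$ an integral domain, every $\varphi \in \Aut(G(R))$ is a composition of an inner automorphism with a product $\delta \circ \sigma \circ \alpha_{*}$, where $\sigma$ is either the identity or the graph involution on $G$ (transpose-inverse followed by conjugation by the longest Weyl element), $\alpha_{*}$ is the entry-wise action of some $\alpha \in \anel{\Aut}(R)$, and $\delta$ is conjugation by a diagonal matrix. Because $\delta$ must preserve $\mbU_n(R)$, the induced scaling on the simple-root slots is a tuple $(u_1,\ldots,u_{n-1}) \in (R^\times)^{n-1}$. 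The homomorphism $\mbD_n(R) \to (R^\times)^{n-1}$ sending $\mathrm{diag}(d_1,\ldots,d_n)$ to $(d_i d_{i+1}^{-1})_i$ is surjective, so, after composing $\varphi$ with a suitable inner automorphism $\iota_{d'}$ (which preserves Reidemeister numbers), we may assume $\delta$ is trivial, i.e.\ $\varphi = \sigma \circ \alpha_{*}$ on the nose.

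For the second step, consider the characteristic quotient
\[
A := \mbU_n(R) / [\mbU_n(R), \mbU_n(R)] \cong R^{n-1},
\]
with coordinates indexed by the simple roots. Here $\mbU_n(R) = [G(R), G(R)]$ is characteristic in $\mbB_n(R)$, and in the projective case the scalar matrices meet $\mbU_n(R)$ trivially, so $A$ persists as a $\varphi$-invariant quotient of $\PB_n(R)$ as well. The induced map $\bar\varphi$ on $A$ is $\addi{\alpha}$ applied coordinate-wise, optionally pre-composed with the reversal $(r_1,\ldots,r_{n-1}) \mapsto (r_{n-1},\ldots,r_1)$ coming from $\sigma$. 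If $\sigma$ is trivial, projecting $A$ onto any single coordinate gives $\bar\varphi = \addi{\alpha}$. If $\sigma$ is the graph involution, the hypothesis $n \geq 4$ guarantees $1 \neq n-1$, so $\{r_1, r_{n-1}\}$ is a $2$-element $\sigma$-invariant set of indices, and projecting $A$ onto the corresponding sub-quotient yields $\bar\varphi = \tau_\alpha$. In both cases the hypothesis gives $R(\bar\varphi) = \infty$, whence $R(\varphi) = \infty$.

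The hardest step is anchoring the automorphism classification in the required generality, namely for $\mbB_n(R)$ and $\PB_n(R)$ with $R$ an arbitrary integral domain with finitely generated group of units: most existing classifications assume $R$ is a field or a local or Euclidean ring, and the projective case $\PB_n(R)$ requires additional bookkeeping to track how the centre-killing interacts with the graph involution and with the diagonal part $\delta$. A minor but necessary technical verification is that the scaling tuple $(u_1,\ldots,u_{n-1})$ lies in $(R^\times)^{n-1}$ rather than in $\Frac(R)^\times$, which is enforced precisely by the condition that $\delta$ preserves the integral unipotent subgroup $\mbU_n(R)$.
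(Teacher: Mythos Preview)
Your overall strategy---reduce $R(\varphi)$ to the induced action on $\mbU_n(R)^{\mathrm{ab}}\cong R^{n-1}$ and project onto a $\tau$-invariant pair of coordinates to recover either $\addi{\alpha}$ or $\tau_\alpha$---matches the paper's. But two load-bearing steps in your write-up are not justified, and the paper's proof shows why they cannot be taken for granted.

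\textbf{The automorphism classification.} You invoke a Dieudonn\'e--Cao--Hahn style description of $\Aut(G(R))$ with $G\in\{\mbB_n,\PB_n\}$ and then concede that the existing literature does not cover arbitrary integral domains. This is precisely the difficulty the paper works around: instead of classifying $\Aut(G(R))$, it uses Lev\v{c}uk's theorem for $\Aut(\mbU_n(R))$, which \emph{is} available for integral domains and $n\geq 4$, and then transfers information to $\PB_n(R)$ via the fact that $\mbU_n(R)$ is characteristic there (as the Hirsch--Plotkin radical). Note that your assertion $\mbU_n(R)=[G(R),G(R)]$ is false in general; the paper explicitly warns that $\mbU_n(R)$ need not even be characteristic in $\mbB_n(R)$, which is why the argument passes through $\PB_n(R)$ first. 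Moreover, Lev\v{c}uk's list includes two families---central automorphisms and automorphisms of type~$\Sigma$---absent from your decomposition; the paper has to check separately that these act trivially on $\mbU_n(R)^{\mathrm{ab}}$.

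\textbf{The sub-quotient issue and the unused hypothesis.} Your $A=\mbU_n(R)^{\mathrm{ab}}$ is not a quotient of $G(R)$ but a sub-quotient, so the inequality $R(\varphi)\geq R(\bar\varphi)$ does not apply directly. The paper's route is: restrict $\psi$ to $\mbU_n(R)$, prove $R(\psi')=\infty$ via the projection you describe, and then lift to $R(\psi)=\infty$ using Lemma~2.2(iii), which requires the induced map on the diagonal quotient $\mbD_n(R)/Z_n(R)$ to have finitely many fixed points whenever its Reidemeister number is finite. That last step is exactly where the hypothesis that $R^\times$ is finitely generated enters (via the abelian Lemma~2.3). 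Your argument never uses this hypothesis, which is a signal that the passage from $R(\psi')=\infty$ to $R(\varphi)=\infty$ has been skipped.
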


The strength of this criterion lies in the achieved complexity reduction and on its reach. For a large class of rings, once 
a comparatively small collection of maps arising from 
ring automorphisms 
are shown to have infinitely many Reidemeister classes, every group $\mbB_n(R)$ and $\PB_n(R)$ with $n\geq 4$ automatically has property $R_\infty$. 
We provide in Section~\ref{applications} applications of Theorem~\ref{thm:Additive} in both non-arithmetic and arithmetic settings. That is, we use our criterion on upper triangular groups over the domains $\Z[t]$ and $\Z[t,t^{-1}]$ --- 
which are not rings of $S$-integers of any global field --- 
but also over the $S$-arithmetic rings 
$\F_p[t]$, $\F_p[t,t^{-1}]$, and rings of integers $\mathcal{O}_{\K}$ of algebraic number fields.

Let us elaborate a bit more on the motivation behind Theorem~\ref{thm:Lieapplication}. 
For a connected linear algebraic group $\mbB$ defined over an algebraically closed field, there is a strong relation between its structure and its Reidemeister spectrum, as investigated by Steinberg~\cite{SteinbergEndo} and more recently by Bhunia and Bose~\cite{BhuniaBose1,BhuniaBose2}. Indeed, if there is some \emph{algebraic} automorphism $\phee$ of $\mbB$ with $R(\phee) < \infty$, then $\mbB$ is necessarily soluble~\cite[Theorem~17]{BhuniaBose1}. As a partial converse, if $\mbB$ is soluble then $R(\phee)$ is either $1$ or $\infty$~\cite[Theorem~2.4]{BhuniaBose2}. 
And, in particular, when $\mbB \leq \mbG$ is a Borel subgroup of a semisimple group $\mbG$, one always has $R(\phee)=\infty$; cf. \cite[Theorem~2.10]{BhuniaBose2}.

It is therefore natural to investigate whether an analogous phenomenon appears in the discrete ($S$-arithemtic) setting. In particular, the following questions arise:
\begin{enumerate}
	\item \label{item:qi} How does the Reidemeister spectrum of an $S$-arithmetic group affect its (algebraic) structure? 
	\item \label{item:qii} If $\Gamma$ is an $S$-arithmetic subgroup of $\mbB(\K)$, with $\mbB$ a Borel subgroup of a semisimple group over a global field $\K$, does $\Gamma$ necessarily have property~$\Ri$? 
	\item \label{item:qiii} In case such a $\Gamma$ does not have~$\Ri$, is $\RSpec(\Gamma) = \{1, \infty\}$? 
\end{enumerate}

A partial answer to the first question follows from work of Jabara. Namely, if a finitely generated linear group $\Gamma$ has an automorphism $\phee$ of finite order with $R(\phee) < \infty$, then $\Gamma$ is virtually soluble; see \cite[Theorem~C]{Jabara}. 

Theorem~\ref{thm:Lieapplication} is our contribution to the theory, giving negative answers to the remaining two questions --- though only narrowly, since~$\Ri$ is in fact attained at higher derived length, in analogy with known nilpotent cases~\cite{DacibergWongCrelle,Romankov,KarelDacibergNil,TimurUniTri}. 

It should be stressed that Theorem~\ref{thm:Lieapplication} and the previously mentioned results are `uniform' in the sense that they hold in every characteristic. This is in strong contrast with other results for algebraic or $S$-arithmetic groups; cf.~\cite{LitterickThomasCRgood,Tiemeyer,Bux04}, for example. 

Our work extends, or improves on, some established results. 
In characteristic zero, a weaker form of Theorem~\ref{thm:Lieapplication} can be deduced from Nasybullov's work~\cite{TimurUniTri} by taking a family of virtually nilpotent arithmetic groups; cf. Remark~\ref{teoremadoTimur} for more details. However, requiring non-virtually-nilpotent examples and achieving~$\Ri$ in dimension $n=4$ onwards, as in our theorem, improves on Nasybullov's findings. In positive characteristic, examples of metabelian $S$-arithmetic groups as in part~(i) of Theorem~\ref{thm:Lieapplication} were known by work of Gon\c{c}alves--Wong~\cite{DacibergWongWreath}. The cases of arbitrarily high derived length in positive characteristic and further examples shown in Section~\ref{applications} are, to our knowledge, new. 

Putting our results further into perspective, Theorem~\ref{thm:Additive} is a contribution towards the wide problem of classifying which amenable groups have property~$\Ri$. There has been substantial progress in the (virtually) nilpotent \cite{DacibergWongCrelle,Romankov,KarelDacibergNil,TimurUniTri}, polycyclic \cite{KarelPenni,KarelSam,KarelSamIrisSolv}, and metabelian cases \cite{TabackWong0,DacibergWongWreath,SteinTabackWong,FelshtynDacibergMetabelian}. Moreover, non-polycyclic nilpotent-by-abelian groups of type~$\FPn{\infty}$ always have~$\Ri$; see \cite[Theorem~4.3]{DesiDaciberg}. 

While Theorem~\ref{thm:Additive} has intersections with some of the above mentioned results, it contributes towards a systematic investigation of twisted conjugacy of many soluble linear groups, elucidating how the structure of the underlying base ring leads to the given group having~$\Ri$. In a companion paper~\cite{Bn1}, whose first draft contained most of the present article, we further highlight how the structure of the base ring --- in that case, as a module over its units --- can detect~$\Ri$ for $\mbB_n(R)$ and its variants, starting with derived length $2$. In particular, it is possible to construct (in any characteristic $p \geq 0$) a family $\{\Gamma_{n,p}\}_{n \geq 2}$ of finitely {presented}, soluble, non-nilpotent, $S$-arithmetic groups $\Gamma_{n,p}$ with $\RSpec(\Gamma_{n,p}) = \{\infty\}$ whose derived lengths grow on $n$. 
Addressing aspects of twisted conjugacy arising from Nielsen fixed point theory, we also show in the paper~\cite{Bn1} how to construct --- using some of those arithmetic groups --- solvmanifolds of arbitrarily high dimensions all of whose self-homeomorphisms can be homotoped to become free of fixed points. 

The present paper also leaves open some questions. 
For instance, an expected strengthening of Theorem~\ref{thm:Lieapplication}, still in type $\mathtt{A}$, would be to drop the workarounds for characteristic two or three and also clear the case of dimension $n=3$. 
More precisely, 

\begin{prob}
Let an arbitrary characteristic $p \geq 0$ be given and let $\mbG_n$ denoted a Borel subgroup of $\GL_n$, $\PGL_n$ or $\SL_n$. Is there a single global field $\K$ of characteristic $\carac(\K)=p$ together with $S$-arithmetic subgroups $\Gamma_{n,p} \leq \mbG_n(\K)$ 
(of same derived length as $\mbG_n(\K)$) 
such that 
\[ \{\infty\} \neq \RSpec(\Gamma_{2,p}) \neq \{1,\infty\} \quad \text{ and } \quad \{\infty\} \neq \RSpec(\Gamma_{3,p}) \neq \{1,\infty\} \]
\[\text{ but } \quad \RSpec(\Gamma_{n,p}) = \{\infty\} \text{ for } n \geq 4? \]
\end{prob}

We remark that, although we explicitly compute Reidemeister numbers in some cases (see Section~\ref{sec:newexampleswithoutRinfty}) to establish part~(i) of Theorem~\ref{thm:Lieapplication}, we do not go so far as to determine the full Reidemeister spectrum of the groups without~$\Ri$ that we investigate here; see \cite{KarelPenni,FelshtynDacibergMetabelian,KarelKaiserSam,KarelSamIrisSolv} for examples in this direction. Similarly, we do not have descriptions for Reidemeister classes or asymptotic properties of Reidemeister numbers. These are interesting (and challenging) problems on their own. Related topics include problems around Higman's conjecture~\cite{HalasiPalfy,Higman,VLA}, dynamical zeta functions attached to Reidemeister numbers~\cite{DekTerVandeBus,FelshtynHill,FelshtynZietek}, and zeta functions of groups counting Reidemeister classes~\cite{PaulaII,ask,RoVo19,duSau05}. Lastly, we do not cover the other classical Lie types $\mathtt{B}_n$ to $\mathtt{G}_2$. Our results and the indicated literature thus motivate us to pose the following.

\begin{prob}
Let $\mc{G}$ be a split reductive linear algebraic group and fix a Borel subgroup $\mc{B} \leq \mc{G}$, all defined over some global field $\K$. Given an $S$-arithmetic subgroup $\Gamma$ of $\mc{B}(\K)$, describe the Reidemeister spectrum $\RSpec(\Gamma)$. And, in case $\RSpec(\Gamma) \neq \{\infty\}$, how do the Reidemeister numbers of automorphisms of $\Gamma$ grow?
\end{prob}

This paper is structured as follows. 
We start with Section~\ref{sec:LemmataRinfty} collecting well-known facts about Reidemeister numbers to be used throughout. In the same section we introduce our groups of interest in more detail. 
An important ingredient in the proof of Theorem~\ref{thm:Additive} is a description of automorphisms of the group of unitriangular matrices, completed by 
V.~Levchuk in~\cite{LevchukOriginal} 
--- we present his theorem in our terminology in Section~\ref{sec:Levchuk}, and sketch how to deduce it from~\cite{LevchukOriginal}. 

We restate and prove Theorem~\ref{thm:Additive} in Section~\ref{sec:provadothmB}. In Section~\ref{applications}, we illustrate how to apply it via non-trivial examples; see Proposition~\ref{pps:FptFptt-1OK}. 
Following up, we complement the previous findings with examples of groups \emph{without} property~$\Ri$, computing explicit Reidemeister numbers; cf. Proposition~\ref{nori}. Theorem~\ref{thm:Lieapplication} summarizes part of these findings, and the final Section~\ref{sec:backtoarithmetic} contains a proof of it for completeness.

To keep the article as self-contained as possible 
we do not assume familiarity with 
algebraic or $S$-arithmetic groups and $S$-arithmetic rings.  
We work concretely with the given matrix groups and base rings, so the reader familiar with standard results might want to skip directly to the the proof of the main theorem in Section~\ref{sec:provadothmB}, and the applications in Section~\ref{applications}.

\section{Auxiliary results, and structure of the groups}
\label{sec:LemmataRinfty}
We first recall known results on Reidemeister numbers, particularly for group extensions. Then, we collect some facts and notation concerning the groups considered in this paper.

\begin{lem}[{See \cite[Cor.~2.5]{FelshtynTroitskyCrelle}}] \label{lem:ignoreinner}
Let $G$ be a group and $\phee \in \Aut(G)$. Let also $\iota_g \in \mathrm{Inn}(G) \leq \Aut(G)$ denote the inner automorphism $\iota_g(h) = ghg^{-1}$. Then $R(\iota_g \circ \phee) = R(\phee)$.
\end{lem}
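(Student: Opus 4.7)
The plan is to exhibit an explicit bijection between the sets of Reidemeister classes of $\phee$ and of $\iota_g \circ \phee$, induced by right-multiplication by $g$.

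First I would unpack the definition. From $[h]_\phee = \{k h \phee(k)^{-1} \mid k \in G\}$ and the identity $(\iota_g \circ \phee)(k)^{-1} = g \phee(k)^{-1} g^{-1}$, one computes
\[
[h]_{\iota_g \circ \phee} = \{ k h g \phee(k)^{-1} g^{-1} \mid k \in G \}.
\]
Right-multiplying by $g$ turns this set into $\{ k (hg) \phee(k)^{-1} \mid k \in G \} = [hg]_\phee$. Thus the assignment $h \mapsto hg$ carries the $\iota_g\circ\phee$-class of $h$ (setwise, up to right translation by $g$) onto the $\phee$-class of $hg$.

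From here the argument is formal: define $\Phi\colon G/{\sim_{\iota_g \circ \phee}} \to G/{\sim_\phee}$ by $\Phi([h]_{\iota_g \circ \phee}) = [hg]_\phee$. The computation above shows $\Phi$ is well-defined; the inverse is given by $[h']_\phee \mapsto [h' g^{-1}]_{\iota_g \circ \phee}$, which is well-defined by the same calculation applied with $g^{-1}$ in place of $g$ (or, equivalently, by reversing the chain of equalities). Hence $\Phi$ is a bijection and $R(\iota_g \circ \phee) = R(\phee)$.

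There is no real obstacle here; the only thing to be careful with is tracking the side on which $g$ acts when passing from $(\iota_g\circ\phee)(k)^{-1}$ to the description of the twisted class, so that the correct translation ($h \mapsto hg$ rather than $h \mapsto gh$) is chosen. Since the statement appears in the cited reference \cite{FelshtynTroitskyCrelle}, I would keep the write-up to a couple of lines and simply present the bijection explicitly.
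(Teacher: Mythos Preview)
Your argument is correct: the right-translation $h \mapsto hg$ indeed carries $(\iota_g\circ\phee)$-classes bijectively onto $\phee$-classes, exactly as you computed. Note that the paper does not give its own proof of this lemma---it merely cites \cite[Cor.~2.5]{FelshtynTroitskyCrelle}---so there is nothing to compare against; your short direct verification is the standard one and is entirely appropriate here.
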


\begin{lem}[{See \cite[Prop.~1.2]{Daciberg}, \cite[Thm.~1]{WongCrelle}, \cite[Lem.~1.1(2)]{DacibergWongCrelle}}] \label{lem:desempre}
Suppose there is a short exact sequence of groups $N \into G \onto Q$ where $N$ is invariant under $\phee\in\Aut(G)$. Denote by $\phee' \in \Aut(N)$ and $\overline{\phee} \in \Aut(Q)$ the automorphisms induced by $\phee$, i.e., $\phee' = \phee|_N$ and, after fixing an isomorphism $Q \cong G/N$ coming from the exact sequence, $\barra{\phee}(gN) = \phee(g)N$. Then the following hold.

\begin{enumerate}
\item \label{PHEEvsPHEEbarra} $R(\phee) \geq R(\overline{\phee})$.
\item \label{PHEEbarra_and_innerPHEE'} 
If $R(\overline{\phee}) < \infty$ and if $R(\iota_g \circ \phee') < \infty$ for all inner automorphisms $\iota_g \in \Aut(G)$, 
then $R(\phee) < \infty$. 
\item \label{PHEEbarrafinite_and_PHEE'infinite} Suppose $\overline{\phee}$ has {finitely many} fixed points. If $R(\overline{\phee}) < \infty$ and $R(\phee') = \infty$, then $R(\phee) = \infty$.
\end{enumerate}
\end{lem}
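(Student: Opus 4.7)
The overall plan is to study the map on Reidemeister class sets induced by the projection $\pi : G \onto Q$, since $\pi$ intertwines $\phee$ and $\overline\phee$. Concretely, if $g_1 = h g_2 \phee(h)^{-1}$ in $G$, then applying $\pi$ yields $\pi(g_1) = \pi(h)\pi(g_2)\overline\phee(\pi(h))^{-1}$, so $\pi$ descends to a well-defined surjection $\bar\pi : \mathcal{R}(\phee) \onto \mathcal{R}(\overline\phee)$ between the sets of twisted conjugacy classes. This surjection already yields part~(i).

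For parts~(ii) and~(iii), the plan is to fix lifts $g_1, \ldots, g_k \in G$ of representatives of the $\overline\phee$-classes (choosing $g_1 = 1$ whenever relevant) and analyze the fibers of $\bar\pi$. Setting $C_i := \{q \in Q : q\pi(g_i)\overline\phee(q)^{-1} = \pi(g_i)\}$ and $H_i := \pi^{-1}(C_i)$, one checks that an element $h \in G$ sends $g_iN$ back into $g_iN$ under $\phee$-conjugation precisely when $h \in H_i$; hence the fiber of $\bar\pi$ over $[\pi(g_i)]_{\overline\phee}$ is the orbit set of $H_i$ acting on $g_iN$ by $\phee$-conjugation.

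The crucial step is to translate this action into a standard twisted conjugation on $N$ via the bijection $g_in \leftrightarrow n$. A direct computation using $\phee$-invariance of $N$ shows that $n' \in N \trianglelefteq H_i$ acts by
\[ n \longmapsto (g_i^{-1}n'g_i)\cdot n \cdot \phee'(n')^{-1}, \]
and after the substitution $n'' := g_i^{-1}n'g_i$ this becomes $(\phee' \circ \iota_{g_i}|_N)$-twisted conjugation by $n''$ on $N$. Invoking the standard identity $R(\alpha\beta) = R(\beta\alpha)$, the number of $N$-orbits on $g_iN$ equals $R(\iota_{g_i}|_N \circ \phee')$, which in the setting of part~(ii) is finite by hypothesis. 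Since passing from $N$-orbits to $H_i$-orbits can only merge classes, each fiber of $\bar\pi$ is finite, and summing over the $k$ representatives yields $R(\phee) < \infty$.

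For part~(iii), I would specialize $g_1 = 1$, so that $C_1 = \mathrm{Fix}(\overline\phee)$ and the $N$-action on $g_1N = N$ reduces to ordinary $\phee'$-twisted conjugation, with infinitely many orbits by hypothesis. Since $H_1/N \cong \mathrm{Fix}(\overline\phee)$ is finite, each $H_1$-orbit in $N$ is a union of at most $|\mathrm{Fix}(\overline\phee)|$ many $\phee'$-classes, so the $H_1$-orbit count remains infinite; already the single fiber of $\bar\pi$ over $[1]_{\overline\phee}$ forces $R(\phee) = \infty$. The main obstacle throughout is the translation in the third paragraph: it requires careful use of $\phee$-invariance of $N$ and of the fact that an inner automorphism $\iota_g \in \Aut(G)$ restricts to a well-defined (generally outer) element of $\Aut(N)$. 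Once this identification is in place, parts~(ii) and~(iii) follow by standard orbit counting modulo the finite groups $H_i/N$.
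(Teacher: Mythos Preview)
Your argument is correct and follows the standard route. Note, however, that the paper does not give its own proof of this lemma: it is stated as a known result with citations to \cite{Daciberg}, \cite{WongCrelle}, and \cite{DacibergWongCrelle}, so there is no in-paper proof to compare against. Your approach---showing that $\pi$ induces a surjection $\mathcal{R}(\phee)\twoheadrightarrow\mathcal{R}(\overline\phee)$ and then analyzing each fiber as an $H_i$-orbit space on $g_iN$, with the $N$-action identified with $(\iota_{g_i}|_N\circ\phee')$-twisted conjugation---is precisely the method underlying those references, and your use of $R(\alpha\beta)=R(\beta\alpha)$ and the finite-index bound $[H_1:N]=|\mathrm{Fix}(\overline\phee)|$ are the standard ingredients. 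Nothing is missing.
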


\begin{lem}[{See \cite[Section~2 and Lemma~4.1]{Romankov}, \cite{KarelDacibergAbelian}}] \label{lem:ReidemeisterAbelian}
Let $A$ be a {finitely generated} abelian group and $\phee \in \Aut(A)$. 
Then $R(\phee) = \infty$ if and only if $\phee$ has {infinitely many} fixed points.
\end{lem}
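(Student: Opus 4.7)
The plan is to translate both quantities---the number of Reidemeister classes and the number of fixed points---into invariants of the endomorphism $\psi := \phee - \id$ of $A$, and then exploit that $A$ has finite rank.

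First I would write $A$ additively. For $g,h \in A$ the twisted conjugation becomes $h + g - \phee(h) = g - \psi(h)$, so the $\phee$-conjugacy class of $g$ is the coset $g + \mathrm{Im}(\psi)$. Hence $R(\phee) = [A : \mathrm{Im}(\psi)] = |\mathrm{coker}(\psi)|$. On the other hand, the set of fixed points of $\phee$ is exactly $\ker(\psi)$. The lemma therefore reduces to the equivalence
\[
|\mathrm{coker}(\psi)| = \infty \iff |\ker(\psi)| = \infty
\]
for an arbitrary endomorphism $\psi$ of a finitely generated abelian group $A$.

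The key step is a rank count. Since $A$ is finitely generated, both $\ker(\psi)$ and $\mathrm{Im}(\psi)$ are finitely generated, and a finitely generated abelian group is infinite precisely when its torsion-free rank is positive. From the short exact sequences
\[
0 \to \ker(\psi) \to A \to \mathrm{Im}(\psi) \to 0, \qquad 0 \to \mathrm{Im}(\psi) \to A \to \mathrm{coker}(\psi) \to 0,
\]
and the additivity of rank on such sequences, one obtains
\[
\mathrm{rank}(\ker \psi) = \mathrm{rank}(A) - \mathrm{rank}(\mathrm{Im}\,\psi) = \mathrm{rank}(\mathrm{coker}\,\psi).
\]
So $\ker(\psi)$ has positive rank if and only if $\mathrm{coker}(\psi)$ does, which---combined with the previous paragraph---yields the desired equivalence and hence the lemma.

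The only possible obstacle is the bookkeeping around the torsion subgroup: one has to be careful that passing between $\ker$ and $\mathrm{coker}$ does not hide extra finiteness issues. But since we only need to detect \emph{infiniteness} (equivalently, positivity of the rank) and rank is additive on short exact sequences of finitely generated abelian groups, the torsion part is harmless and the argument above suffices.
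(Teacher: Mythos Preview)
Your argument is correct. The reduction to $\psi=\phee-\id$, the identification $R(\phee)=|\mathrm{coker}(\psi)|$ and $\mathrm{Fix}(\phee)=\ker(\psi)$, and the rank comparison via the two short exact sequences are all sound; the final step uses only that a finitely generated abelian group is infinite precisely when its torsion-free rank is positive, which is exactly what the structure theorem gives.

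As for comparison: the paper does not give its own proof of this lemma---it is simply quoted from \cite{Romankov} and \cite{KarelDacibergAbelian}. Your argument is in fact the standard one appearing (in some form) in those references, so there is no alternative route in the paper to contrast with. One minor stylistic point: you might make explicit that $\mathrm{coker}(\psi)$ is finitely generated (as a quotient of $A$) before invoking the ``infinite $\Leftrightarrow$ positive rank'' criterion for it, though this is clear enough from context.
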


We now turn to 
our groups of interest. 
For us, $R$ denotes a \emph{commutative} ring \emph{with unity} $1\neq 0$ unless stated otherwise. 

The group $\mbB_n(R)$, also called the (standard) \emph{Borel subgroup} of $\GL_n(R)$, is the soluble subgroup of $\GL_n(R)$ of upper triangular matrices. 

Let $\mbU_n(R) \leq \mbB_n(R)$ be the subgroup of unitriangular matrices, and $\mbD_n(R) \leq \mbB_n(R)$ 
the subgroup of diagonal matrices. As an abstract group, $\mbB_n(R)$ decomposes as the semi-direct product $\mbB_n(R) = \mbU_n(R) \rtimes \mbD_n(R)$. 
Since $R$ is commutative, the group $\mbU_n(R)$ is nilpotent (of nilpotency class $n-1$) and {$\mbD_n(R)$} 
is abelian, hence $\mbB_n(R)$ is soluble of derived length at most $n$.

Modding out scalar matrices leads to an important quotient. Specifically, 
consider the central subgroup 
\[Z_n(R) = \set{u\cdot \mb{1}_n \in \GL_n(R) \mid u \in R^\times},\] 
where $\mb{1}_n$ denotes the $n \times n$ identity matrix.  
The \emph{projective upper triangular group} $\PB_n(R)$ 
is given by 
\[ \PB_n(R) = \frac{\mbB_n(R)}{Z_n(R)}.\] 

\begin{rmk} \label{obs:centerBn} 
{If $R$ is an integral domain, straightforward computations show that $Z_n(R)$ coincides with the center of $\mbB_n(R)$ whenever it is non-trivial.} 
Thus $\PB_n(R)$ 
is a quotient of $\mbB_n(R)$ 
modulo a {characteristic} subgroup whenever $R$ is an integral domain. 
\end{rmk}

\begin{rmk} \label{obs:referee} Note that $Z_n(R) \subseteq \mbD_n(R)$ and that the restriction of the canonical quotient map 
\[
\pi : \mathbf{B}_n(R) \longrightarrow \PB_n(R) = \mathbf{B}_n(R)/Z_n(R)
\]
to $\mathbf{U}_n(R)$ is injective, which yields an obvious isomorphism
\begin{equation}
\label{eq:referee} \PB_n(R) \;\cong\; \mathbf{U}_n(R) \rtimes 
\bigl(\mathbf{D}_n(R)/Z_n(R)\bigr) 
\end{equation}
inherited from the semidirect product 
$\mathbf{B}_n(R)=\mathbf{U}_n(R)\rtimes \mathbf{D}_n(R)$.

By abuse of notation, we identify $\mbU_n(R)$ with its canonical copy in $\PB_n(R)$ under the map $\pi$, so that any element $g \in \PB_n(R)$ 
can be uniquely written as $g = u[d]$ where $u \in \mbU_n(R)$ and $[d]$ is the class of $d \in \mbD_n(R)$ in $\mbD_n(R)/Z_n(R)$. 
\end{rmk}

We remark that $\PB_2(R)$ is isomorphic to the group $\Aff(R) \cong \left( \begin{smallmatrix} * & * \\ 0 & 1 \end{smallmatrix} \right) \leq \GL_2(R)$ of affine transformations of the base ring $R$; cf. \cite[Lemma~3.6]{Bn1}.


We introduce some more notation that will be used throughout. We let $\Addi(R) = (R,+)$ and $\Mult(R) = (R^\times, \cdot)$ denote the underlying additive group and the group of units of the ring $R$, respectively.

Following common notation for $\GL_n(R)$, we write $\eij(r)$ to denote the usual elementary matrix with off-diagonal entry $r\in R$ in position $(i,j)$, all its diagonal entries equal to~$1$, and having zeroes elsewhere. As we work with upper triangular groups throughout, only such matrices with $i<j$ will be used. 
For instance, in $\GL_2(\Z)$ one has $\ekl{1,2}(2) = \left( \begin{smallmatrix} 1 & 2 \\ 0 & 1 \end{smallmatrix} \right)$. For a fixed $(i,j)$ we also denote $\mc{E}_{i,j}(R) := \langle \{ \ekl{i,j}(r) \in \mbU_n(R) \mid r \in R \}\rangle \cong \Addi(R)$.

Given $i \in \set{1,\ldots,n}$ and $u \in R^\times$ we let $\di(u) \in \GL_n(R)$ denote the diagonal matrix whose $i$-th entry is $u$ and all other (diagonal) entries are equal to~$1$. For example, $\dk{2}(-1) = \left( \begin{smallmatrix} 1 & 0 \\ 0 & -1 \end{smallmatrix} \right)$ in $\GL_2(\Z)$. 
The elements 
$\eij(r)$ are called \emph{elementary} matrices and the $\di(u)$ are 
\emph{elementary diagonal} matrices.

We recall that elements of $\mbU_n(R)$ can be uniquely written as a product of elementary matrices ordered according to the superdiagonals of $\mbU_n(R)$. More precisely, given $x \in \mbU_n(R)$, there exist (uniquely determined) $r_{i,j} \in R$ such that 
\begin{align} \label{rel:elementsofUn} 
\begin{split}
x = & \phantom{.} e_{1,2}(r_{1,2}) e_{2,3}(r_{2,3}) \cdot \ldots \cdot e_{n-1,n}(r_{n-1,n}) \cdot \\
 & \cdot  e_{1,3}(r_{1,3}) \cdot \ldots \cdot e_{n-2,n}(r_{n-2,n}) \cdot \ldots \ldots \cdot e_{1,n}(r_{1,n}). 
\end{split}
\end{align}

All of the group theoretical properties above for $\mbB_n(R)$, $\mbD_n(R)$ and $\mbU_n(R)$ are deduced from classic observations using the following sets of relations; see, e.g., \cite{HahnO'Meara, Silvester}. 
(Group commutators in this paper are written as $[g,h] = ghg^{-1}h^{-1}$.) 
\begin{align}
\label{rel:commutators}
\begin{split}
[\eij(r),\ekl{k,l}(s)] & =
\begin{cases}
\ekl{i,l}(rs) & \mbox{if } j=k,\\
1 & \mbox{if } i \neq l \text{ and } k \neq j,
\end{cases} \\
\di(u) \ekl{k,l}(r) \di(u)^{-1} & = 
\begin{cases}
\ekl{k,l}(ur) & \mbox{if } i=k,\\
\ekl{k,l}(u^{-1}r) & \mbox{if } i=l, \\
e_{k,l}(r) & \mbox{otherwise}.
\end{cases}
\end{split}
\end{align}
The equations~\bref{rel:commutators} are also referred to as \emph{elementary} or \emph{commutator relations}.
%
They also yield the following relations in the diagonal of $\PB_n(R)$. 
\begin{align} \label{rel:projectiveconjugation}
[d] \ekl{k,l}(r) [d]^{-1} = \ekl{k,l}(u_k u_l^{-1} r) & \phantom{a} & \text{ if } \quad d = d_1(u_1) \cdots d_n(u_n).
\end{align}

In the beginning of Section~\ref{sec:LemmataRinfty} we recalled useful results on the relationship between the (non-)finiteness of Reidemeister numbers and characteristic subgroups. 
In the category of linear algebraic groups (over a fixed field $\K$), one has that $\mbU_n$ is the commutator subgroup scheme of $\mbB_n$, hence (algebraically) characteristic in $\mbB_n$. In the general abstract case one has the following.
\begin{rmk} \label{rmk:Uncharacteristic}
The subgroup $\mbU_n(R)$ is not necessarily characteristic in $\mbB_n(R)$, even over integral domains; cf.~\cite[Section~3.2]{Bn1}. Regardless, if $R$ is an integral domain, $\mbU_n(R)$ is the \emph{Hirsch--Plotkin radical} (hence a characteristic subgroup) of $\PB_n(R)$; see, e.g., \cite[Proposition~3.9]{Bn1} for a proof.
\end{rmk} 

\section{The theorem of Levchuk} \label{sec:Levchuk}

The main ingredient in the proof of Theorem~\ref{thm:Additive} is a remarkable result due to Vladimir 
Levchuk 
describing the automorphisms of the group of unitriangular matrices over arbitrary (associative, unital) rings; see~\cite{LevchukOriginal}. 
We begin by recasting his theorem in the terminology and form needed for the present work --- in particular, we only work with integral domains and restate the results directly for the {upper} unitriangular matrix group $\mbU_n(R)$ with the usual matrix multiplication, instead of the group of {lower} unitriangular matrices $\mbU^{-}_n(R)$ as in the original paper~\cite{LevchukOriginal}. 

\begin{thm}[{Levchuk~\cite{LevchukOriginal}}] \label{thm:Levchuk} 
Let $R$ be an integral domain such that $R \neq \F_2$ and let $n \geq 4$. Then any automorphism $\psi \in \Aut(\mbU_n(R))$ can be written as a product 
\[\psi = \iota_u \circ \iota_{d} \circ \mc{z} \circ \sigma \circ \tau^\veps \circ \alpha_\ast, \]
 where 
 \begin{itemize}
     \item $\iota_u \in \Inn(\mbU_n(R))$ is conjugation by an element of $\mbU_n(R)$, and $\iota_d$ is conjugation by a diagonal matrix $d \in \mbD_n(R)$; see Section~\ref{Levchuk:inner},
     \item $\mc{z} \in \mc{Z}$ is a central automorphism; see Section~\ref{Levchuk:central}, 
     \item $\sigma \in \mc{U}^{(c)}$ is an extremal automorphism; 
		see Section~\ref{Levchuk:sigma}, 
     \item $\tau$ is the flip automorphism and $\veps \in \set{0,1}$; see Section~\ref{Levchuk:flip}, and 
     \item $\alpha_\ast$ is induced by a ring automorphism $\alpha \in \anel{\Aut}(R)$; see Section~\ref{Levchuk:ring}.
 \end{itemize}
\end{thm}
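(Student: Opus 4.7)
The plan is to reduce Theorem~\ref{thm:Levchuk} to its original formulation in~\cite{LevchukOriginal}, which treats the \emph{lower} unitriangular case, by transport of structure through the isomorphism
\[ \Psi : \mbU_n(R) \xrightarrow{\sim} \mbU_n^{-}(R), \quad u \mapsto w_0\, u\, w_0^{-1}, \]
induced by conjugation by the antidiagonal permutation matrix $w_0 \in \GL_n(R)$. This conjugation sends $\eij(r)$ to $e_{n+1-i,\,n+1-j}(r)$ and hence carries $\mbU_n(R)$ bijectively onto its lower-triangular counterpart $\mbU_n^{-}(R)$ while preserving the elementary relations~\bref{rel:commutators}. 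Consequently, every $\psi \in \Aut(\mbU_n(R))$ corresponds to a unique $\psi^{-} = \Psi \circ \psi \circ \Psi^{-1} \in \Aut(\mbU_n^{-}(R))$, and applying Lev\v{c}uk's theorem to $\psi^{-}$ yields a factorization into his five distinguished classes of automorphisms.

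The second step is to identify each of those classes with the corresponding classes appearing in our Sections~\ref{Levchuk:inner}--\ref{Levchuk:ring}. Inner automorphisms of $\mbU_n^{-}(R)$ by elements of the lower Borel correspond, under $\Psi$, to inner automorphisms coming from $\mbB_n(R)$, and hence (after passing modulo scalar matrices, cf.\ Remark~\ref{obs:centerBn}) to inner automorphisms coming from $\PB_n(R)$. Central and ring automorphisms transfer verbatim, as they are defined intrinsically through the center of $\mbU_n(R)$ and through functoriality in the base ring $R$. The reflection-across-the-antidiagonal automorphism in~\cite{LevchukOriginal} corresponds under $\Psi$ to our flip $\tau$ of Section~\ref{Levchuk:flip}, and the $\Sigma$-type automorphisms match those described in Section~\ref{Levchuk:sigma}.

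Finally, one must verify that the precise factorization order $\iota \circ \mc{z} \circ \sigma \circ \tau^{\veps} \circ \alpha_\ast$ asserted in the theorem can actually be achieved. I expect this to be the main technical obstacle: Lev\v{c}uk's original factorization may present the five factors in a different order, and swapping them typically produces discrepancies that must be absorbed into the inner and central components. Moving a ring or $\Sigma$-type automorphism past an inner one requires computing the conjugated automorphism on elementary matrices via~\bref{rel:commutators} and checking that the result lies in the intended class, possibly after modification by a central automorphism. The integral-domain hypothesis on $R$ and the constraint $n \geq 4$ are preserved under the identification $\Psi$, so the hypotheses of Lev\v{c}uk's original theorem are fully inherited, and the pulled-back decomposition of $\psi$ is the one claimed.
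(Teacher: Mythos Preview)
Your proposal has a genuine gap: you treat Lev\v{c}uk's original result as though it already produces exactly the five classes listed in the statement, so that only a reordering and a transport-of-structure argument remain. That is not the case. Lev\v{c}uk's general theorems (for arbitrary commutative rings) involve \emph{additional} subgroups of $\Aut(\mbU_n^{-}(R))$ beyond the five you name. For $n \geq 5$, his Corollary~3 yields
\[
\Aut(\mbU_n(R)) = (((\mc{Z} \cdot \Inn(\mbU_n(R)) \cdot \mc{U}^{(c)} \cdot V) \ltimes W) \ltimes D) \ltimes \anel{\Aut}(R),
\]
with extra subgroups $V$ (built from certain annihilator elements) and $W$ (built from idempotents of $R$, of which $\tau$ is only one generator). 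For $n = 4$ the decomposition is different again, involving a subgroup $\widetilde{S}$ arising from certain $\SL_2(R)$-matrices. The substantive content of the proof in this paper is precisely to exploit the integral-domain hypothesis to show that $V = \{\id\}$, that $W = \langle \tau \rangle$, and (separately, via explicit matrix computations) that $\widetilde{S} \cdot D = \langle \tau \rangle \ltimes D$ when $n=4$. Your sentence ``the hypotheses of Lev\v{c}uk's original theorem are fully inherited'' has it backwards: Lev\v{c}uk does not assume $R$ is an integral domain, and that hypothesis is doing real work here that your outline never performs.

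A second, smaller issue: the reordering you flag as the ``main technical obstacle'' is actually the easier part. Once one has the equality above (with $V$ and $W$ collapsed), the paper obtains the stated order essentially by using that $\mc{Z}$ commutes with $\Inn(\mbU_n(R))$ and rewriting the iterated semidirect products as set-products; no delicate commutator computations of the kind you anticipate are needed. So your emphasis is inverted: the transport via $w_0$ and the reordering are routine, while the collapsing of $V$, $W$, and $\widetilde{S}$ under the integral-domain assumption---which you omit entirely---is where the argument lives.
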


In what follows we elucidate the automorphisms appearing in the statement and, since the theorem does not appear in the original paper~\cite{LevchukOriginal} in the form stated above, we explain in Section~\ref{Levchuk:proof} how it follows from 
Levchuk's 
work. 

\subsection{Conjugation by elements of $\mbU_n(R)$ and $\mbD_n(R)$} \label{Levchuk:inner}
The first elements of $\Aut(\mbU_n(R))$ in 
Levchuk's 
list are akin to inner automorphisms, i.e., conjugation. In an arbitrary group $G$, given $g\in G$ we usually denote by $\iota_g$ or by $\kappa_g$ the inner automorphism $\iota_g(h) = ghg^{-1}$ (resp. $\kappa_g(h) = ghg^{-1}$). (We remind the reader that the group of inner automorphisms $\Inn(G)$ is isomorphic to $G/Z(G)$.) 
We then have the obvious subgroup $\Inn(\mbU_n(R)) \leq \Aut(\mbU_n(R))$ of inner automorphisms of $\mbU_n(R)$. However, since $\mbU_n(R)$ is also invariant under conjugation by {diagonal} matrices $d \in \mbD_n(R)$ due to the relations~\bref{rel:commutators}, 
it follows that the assignments $\iota_d(x) = dxd^{-1}$ for $d \in \mbD_n(R)$ and $x \in \mbU_n(R)$ may still be canonically viewed as automorphisms of $\mbU_n(R)$. (We are henceforth abusing notation using $\iota$ for such maps even though they are not inner automorphisms of $\mbU_n(R)$.) Levchuk denotes by $D$ the subgroup of $\Aut(\mbU_n(R))$ generated by all such $\iota_d$. In symbols,
\[D = \{ \iota_d \in \Aut(\mbU_n(R)) \mid d \in \mbD_n(R) \} \quad \text{ where } \quad \iota_d(x) = dxd^{-1}.\]
We stress, however, that every scalar matrix $d \in Z_n(R) = \{u \cdot \mb{1}_n \mid u \in R^\times\}$ yields $\iota_d = \id\vert_{\mbU_n(R)}$. 


\begin{rmk} \label{obs:innerautomorphisms}
Since scalar matrices act trivially on $\mbU_n(R)$ by conjugation, every class $[d] \in \mbD_n(R) / Z_n(R)$ of diagonal matrix modulo scalar matrices gives rise to a well-defined automorphism
\[\iota_{[d]}(x) = [d]x[d]^{-1}, \quad x \in \mbU_n(R),\]
of $\mbU_n(R)$; cf. relations~\bref{rel:projectiveconjugation}. Moreover, relations~\bref{rel:commutators} and~\bref{rel:projectiveconjugation} imply that $\iota_d = \iota_{[d]}$ for any $d \in \mbD_n(R)$. In particular, the subgroup $D \leq \Aut(\mbU_n(R))$ may be rewritten as being generated by those $\iota_{[d]}$ instead. 

Recalling that 
$\PB_n(R)$ is identified with $\mbU_n(R) \rtimes (\mbD_n(R) / Z_n(R))$ and given an element $g \in \PB_n(R)$ written uniquely as $g = u[d]$ with $u \in \mbU_n(R)$ and 
$[d] \in \mbD_n(R) / Z_n(R)$, it is clear that the subset of automorphisms $\Inn(\mbU_n(R)) \cdot D \subseteq \Aut(\mbU_n(R))$ coincides with the obvious image of $\Inn(\PB_n(R))$ in $\Aut(\mbU_n(R))$ obtained by restricting inner automorphisms of $\PB_n(R)$ to its normal subgroup $\mbU_n(R)$. But note that this natural map $\Inn(\PB_n(R)) \to \Aut(\mbU_n(R))$ is not injective in general since its kernel contains an isomorphic copy of $Z(\mbU_n(R))$. 
\end{rmk}

\subsection{Central automophisms} \label{Levchuk:central}
Next we consider automorphisms which only modify the center. 
Recall that the lower central series of a group $G$ is given by $\gamma_1(G) = G$ and $\gamma_{i+1}(G) = [\gamma_i(G),G]$. And if $G$ is non-trivial and nilpotent, its center $Z(G)$ always contains the last non-trivial term of the lower central series. 

Straightforward calculations (e.g., using relations~\bref{rel:commutators}) yield 
\[Z(\mbU_n(R)) = \gamma_{n-1}(\mbU_n(R)) = \mc{E}_{1,n}(R):=\langle \{ \ekl{1,n}(r) \in \mbU_n(R) \mid r \in R \}\rangle.\] 
Given an index $k \in \set{1,\ldots,n-1}$, any endomorphism $\lambda \in \End(\Addi(R))$ of the underlying additive group of $R$ gives rise to an element $\zeta_k(\lambda) \in \Aut(\mbU_n(R))$ as in~\cite[65]{LevchukOriginal}, defined by
\[ \zeta_k(\lambda)((a_{i,j})) = (a_{i,j}) \cdot \ekl{1,n}(\lambda(a_{i,i+1})) \, \text{ for every matrix } \, (a_{i,j}) \in \mbU_n(R). \]
The subgroup
\[ \mc{Z} = \gera{ \, \set{ \, \zeta_k(\lambda) \in \Aut(\mbU_n(R)) \, \mid \, k \in \set{1,\ldots,n-1}, \lambda \in \End(\Addi(R)) \, } \, } \]
is called the \emph{group of central automorphisms}.

\begin{rmk} \label{obs:centralautomorphisms}
Since the elementary matrices $\ekl{1,n}(r)$ are central in $\mbU_n(R)$, central automorphisms commute with inner automorphisms of $\mbU_n(R)$. That is, the subgroups $\mc{Z} \leq \Aut(\mbU_n(R))$ and $\Inn(\mbU_n(R)) \leq \Aut(\mbU_n(R))$ commute elementwise. (Though $\mc{Z}$ need not commute elementwise with $D$.)
\end{rmk}

\subsection{Extremal automorphisms}\label{Levchuk:sigma}
We now describe 
\emph{extremal automorphisms} of $\mbU_n(R)$. Such maps have been studied at least since the 1950s (see, e.g., Pavlov's work \cite{PavlovSylow}). For those familiar with linear algebraic groups, extremal automorphisms are so called because they are defined as maps acting on some simple roots located in the extrema of the underlying root system; cf. \cite{GibbsAut}. In our case, these are the first and last simple roots in type $\mathtt{A}_{n-1}$.

Let a function $\lambda: R \to R$ and an element $a \in R$ satisfy 
\begin{equation} \label{defsigmas} \lambda(r+s) = ars + \lambda(r) + \lambda(s) \, \text{ for all } \, r,s \in R. \end{equation} 
For example, when $R$ is a field with $\mathrm{char}(R)\neq 2$, a common choice for $\lambda$ and $a$ fulfilling Condition~\bref{defsigmas} is $\lambda(x) = -\frac{1}{2}x^2$ and $a=-1$.

Then, as seen in~\cite[66]{LevchukOriginal}, the maps $\sigma_{\lambda,a}$ and $\sigma_{\lambda,a}'$, defined on the generators of $\mbU_n(R)$ via
\[ \sigma_{\lambda,a} : \begin{cases} e_{1,2}(r) & \mapsto e_{1,2}(r) \cdot e_{2,n}(ar) \cdot e_{1,n}(\lambda(r) - ar^2) \\ e_{i,i+1}(r) & \mapsto e_{i,i+1}(r) \, \, \text{ if } i > 1, \end{cases} \]
\[ \sigma_{\lambda,a}' : \begin{cases} e_{n-1,n}(r) & \mapsto e_{n-1,n}(r) \cdot e_{1,n-1}(ar) \cdot e_{1,n}(\lambda(r)) \\ e_{i,i+1}(r) & \mapsto e_{i,i+1}(r) \, \, \text{ if } i < n-1, \end{cases} \]
induce automorphisms of $\mbU_n(R)$. 
For instance, if $n=3$ and $R = \Q$, the choices $\lambda(x) = -\frac{1}{2}x^2$ and $a=-1$ yield
\[\sigma_{\lambda,a}(e_{1,2}(r)) = e_{1,2}(r) e_{2,3}(-r) e_{1,3}\left(\frac{1}{2}r^2\right).\]
One quickly checks with the commutator relations~\bref{rel:commutators} that the above $\sigma_{\lambda,a}$ is an honest automorphism of $\mbU_3(\Q)$.

Abusing notation, we shall also denote all such automorphisms as above by $\sigma_{\lambda,a}$ and $\sigma_{\lambda,a}'$, and we call them \emph{extremal automorphisms} of $\mbU_n(R)$, borrowing terminology used by Pavlov \cite{PavlovSylow} and Gibbs \cite{GibbsAut}. 
Levchuk 
denotes by $\mc{U}^{(c)}$ the group generated by all extremal automorphisms, i.e., 
\[ \mc{U}^{(c)} = \gera{ \, \set{ \,\sigma_{\lambda,a}, \sigma_{\lambda,a}' \in \Aut(\mbU_n(R)) \, \mid \, \lambda \text{ and } a \text{ satisfy Condition~\bref{defsigmas}} \, } \, }. \]
For simplicity, we also call $\mc{U}^{(c)} \leq \Aut(\mbU_n(R))$ itself the group of extremal automorphisms, and its 
elements 
will typically be denoted by $\sigma$.

\subsection{Flip automorphism}\label{Levchuk:flip}
Next in our list is the `flip' automorphism, to be denoted by $\tau \in \Aut(\mbU_n(R))$. 
(This was termed `mirror' automorphism by Pavlov \cite{PavlovSylow}.) 
The reader familiar with Chevalley--Demazure groups might know $\tau$ as the restriction to $\mbU_n(R)$ of the automorphism of $\SL_n(R)$ induced by the unique graph automorphism of the Dynkin diagram of type $\mathtt{A}_{n-1}$; cf. \cite[206]{GibbsAut}, \cite[65]{LevchukOriginal}, and \cite[91]{Steinberg}. In our case, $\tau$ is most conveniently described by the rule
\[ \tau(\eij(r)) = e_{n-j+1,n-i+1}((-1)^{j-i-1}r) \, \text{ for } \eij(r) \in \mbU_n(R) \]
on the generators of $\mbU_n(R)$. 
Pictorially, $\tau$ is obtained by flipping the entries of the matrices in $\mbU_n(R)$ along the anti-diagonal and adjusting the signs of the matrix entries in accordance with the commutator relations~\bref{rel:commutators}. For instance, for $n=3$,
the flip is defined 
by
\begin{align*}
\left(\begin{smallmatrix}
    1 & r & 0 \\ 0 & 1 & 0 \\ 0 & 0 & 1
\end{smallmatrix}\right)&\xmapsto{\tau}\left(\begin{smallmatrix}
    1 & 0 & 0 \\ 0 & 1 & r \\ 0 & 0 & 1
\end{smallmatrix}\right), \quad 
\left(\begin{smallmatrix}
    1 & 0 & 0 \\ 0 & 1 & r \\ 0 & 0 & 1
\end{smallmatrix}\right)
\xmapsto{\tau}\left(\begin{smallmatrix}
    1 & r & 0 \\ 0 & 1 & 0 \\ 0 & 0 & 1
\end{smallmatrix}\right), \quad 
\left(\begin{smallmatrix}
    1 & 0 & r \\ 0 & 1 & 0 \\ 0 & 0 & 1
\end{smallmatrix}\right)
\xmapsto{\tau}\left(\begin{smallmatrix}
    1 & 0 & -r \\ 0 & 1 & 0 \\ 0 & 0 & 1
\end{smallmatrix}\right),
%
\end{align*}
whereas for $n=4$ one has 
\begin{align*}
\left(\begin{smallmatrix}
    1 & r & 0 & 0 \\ 0 & 1 & 0 & 0 \\ 0 & 0 & 1 & 0 \\0 & 0 & 0 & 1 
\end{smallmatrix}\right)&
\xmapsto{\tau}\left(\begin{smallmatrix}
    1 & 0 & 0 & 0 \\ 0 & 1 & 0 & 0 \\ 0 & 0 & 1 & r \\ 0 & 0 & 0 & 1
\end{smallmatrix}\right), \, \,
\left(\begin{smallmatrix}
    1 & 0 & 0 & 0 \\ 0 & 1 & r & 0 \\ 0 & 0 & 1 & 0 \\0 & 0 & 0 & 1 
\end{smallmatrix}\right)
\xmapsto{\tau}\left(\begin{smallmatrix}
    1 & 0 & 0 & 0 \\ 0 & 1 & r & 0 \\ 0 & 0 & 1 & 0 \\ 0 & 0 & 0 & 1
\end{smallmatrix}\right), \, \,
\left(\begin{smallmatrix}
    1 & 0 & 0 & 0 \\ 0 & 1 & 0 & 0 \\ 0 & 0 & 1 & r \\0 & 0 & 0 & 1 
\end{smallmatrix}\right)
\xmapsto{\tau}\left(\begin{smallmatrix}
    1 & r & 0 & 0 \\ 0 & 1 & 0 & 0 \\ 0 & 0 & 1 & 0 \\ 0 & 0 & 0 & 1
\end{smallmatrix}\right), 
\end{align*}
\begin{align*}
\left(\begin{smallmatrix}
    1 & 0 & r & 0 \\ 0 & 1 & 0 & 0 \\ 0 & 0 & 1 & 0 \\0 & 0 & 0 & 1 
\end{smallmatrix}\right)&
\xmapsto{\tau}\left(\begin{smallmatrix}
    1 & 0 & 0 & 0 \\ 0 & 1 & 0 & -r \\ 0 & 0 & 1 & 0 \\ 0 & 0 & 0 & 1
\end{smallmatrix}\right), \, \,
\left(\begin{smallmatrix}
    1 & 0 & 0 & 0 \\ 0 & 1 & 0 & r \\ 0 & 0 & 1 & 0 \\0 & 0 & 0 & 1 
\end{smallmatrix}\right)
\xmapsto{\tau}\left(\begin{smallmatrix}
    1 & 0 & -r & 0 \\ 0 & 1 & 0 & 0 \\ 0 & 0 & 1 & 0 \\ 0 & 0 & 0 & 1
\end{smallmatrix}\right), \, \,
\left(\begin{smallmatrix}
    1 & 0 & 0 & r \\ 0 & 1 & 0 & 0 \\ 0 & 0 & 1 & 0 \\0 & 0 & 0 & 1 
\end{smallmatrix}\right)
\xmapsto{\tau}\left(\begin{smallmatrix}
    1 & 0 & 0 & r \\ 0 & 1 & 0 & 0 \\ 0 & 0 & 1 & 0 \\ 0 & 0 & 0 & 1
\end{smallmatrix}\right). 
\end{align*}
It is immediate that $\tau^2 = \id$.

\subsection{Group automorphisms induced by ring automorphisms}\label{Levchuk:ring}
To finish our list of automorphisms of $\mbU_n(R)$ recall that, since $\mbU_n$ is an affine $\Z$-subscheme of $\GL_n$, any ring automorphism $\alpha \in \anel{\Aut}(R)$ induces by functoriality a group automorphism $\alpha_\ast : \mbU_n(R) \to \mbU_n(R)$. More explicitly, given $\alpha \in \anel{\Aut}(R)$ and a matrix $(a_{ij}) \in \mbU_n(R)$, one has $\alpha_\ast((a_{ij})) := (\alpha(a_{ij}))$. With this in mind we consider $\anel{\Aut}(R)$ as a subgroup of $\Aut(\mbU_n(R))$ in the obvious way and call it the \emph{subgroup of ring automorphisms} of $\mbU_n(R)$.

\begin{rmk} \label{obs:ringautomorphisms}
All subgroups $\sbgpeij(R) \leq \mbU_n(R)$ are $\alpha_\ast$-invariant for any ring automorphism $\alpha \in \anel{\Aut}(R)$. This is because ring automorphisms preserve~$1$ and so $\alpha_\ast(\eij(r)) = \eij(\alpha(r))$. In particular, $\alpha_\ast$ induces an automorphism $\barra{\alpha}_\ast$ on the abelianization $\mbU_n(R)^{\mathrm{ab}}$ such that the image of every $\mc{E}_{i,i+1}(R)$ in $\mbU_n(R)^{\mathrm{ab}}$ is also $\barra{\alpha}_\ast$-invariant. Moreover, since $\sbgpeij(R) \cong \Addi(R)$ we may write $\alpha_\ast(\eij(r)) = \eij(\addi{\alpha}(r))$, where $\addi{\alpha}$ is the same map $\alpha$ but viewed as an automorphism of the underlying additive group $\Addi(R) = (R,+)$. In particular, the restriction $\alpha_\ast|_{\sbgpeij(R)}$ is also interpreted as an automorphism of $\Addi(R)$.
\end{rmk}

\subsection{Proof of Theorem~\ref{thm:Levchuk}}\label{Levchuk:proof}

Here we spell out how Theorem~\ref{thm:Levchuk} follows from 
Levchuk's 
work. We want to show that $\Aut(\mbU_n(R))$ can be written as a product
\[\Aut(\mbU_n(R)) = \Inn(\mbU_n(R)) \cdot D \cdot \mc{Z} \cdot \mc{U}^{(c)} \cdot \gera{\tau} \cdot \anel{\Aut}(R)\]
whenever $n\geq 4$, which implies that any $\psi \in \Aut(\mbU_n(R))$ can be written as a product 
\[\psi = \iota_u \circ \iota_d \circ \mc{z} \circ \sigma \circ \tau^\veps \circ \alpha_\ast,\] 
with $\iota_u \in \Inn(\mbU_n(R))$, $\iota_d \in D$ being conjugation by a diagonal matrix, $\mc{z} \in \mc{Z}$ a central automorphism, $\sigma \in \mc{U}^{(c)}$ an extremal automorphism, $\varepsilon \in \{0,1\}$, $\tau$ the flip automorphism, and $\alpha_\ast$ is induced by a ring automorphism. We need to address the cases $n\geq 5$ and $n=4$ separately.

Recall that $D \leq \Aut(\mbU_n(R))$ is the subgroup of automorphisms which are given by conjugation by a diagonal matrix. By~\cite[Corollary~3]{LevchukOriginal}, if $n \geq 5$ then there exist two further subgroups $W$ and $V$ of $\Aut(\mbU_n(R))$ such that $\Aut(\mbU_n(R))$ decomposes as 
\[ \Aut(\mbU_n(R)) = (((\mc{Z} \cdot \Inn(\mbU_n(R)) \cdot \mc{U}^{(c)} \cdot V) \rtimes W) \rtimes D) \rtimes \anel{\Aut}(R). \]
(In \cite{LevchukOriginal}, the group $\Inn(\mbU_n(R))$ is denoted by a calligraphic J.)

The generators of the subgroup $V \leq \Aut(\mbU_n(R))$ above are induced by the assignments $\eta_a$, $\eta_b$ defined in~\cite[66]{LevchukOriginal}. The former are defined by choosing elements $a \in R$ belonging ([\emph{loc. cit.}]) to the left-annihilator of the set 
\[\set{rs-sr \mid r,s \in R} \cup \set{2} \cup \{ (x^2-x)(y^2-y) \mid x,y \in R\} \subseteq R.\]
(We remind the reader that Levchuk works with arbitrary associative unital rings.) 
Since our ring $R$ is commutative, the first set above is just $\{0\}$. Moreover, the last set must contain some non-zero element unless $R = \F_2$, which is excluded from our hypotheses. Thus $a$ would be a zero divisor, which is also not allowed as $R$ is an integral domain for us. Hence the only possible choice is $a=0$, in which case one readily checks that Levchuk's $\eta_0$ induces the identity map on $\mbU_n(R)$. The latter maps $\eta_b$ are defined via similar conditions by symmetry with right multiplication (\cite[66]{LevchukOriginal}), and the same reasoning with our hypotheses implies that the maps $\eta_b$ can be ignored as well, hence $V = \set{\id}$. 

The generators of the subgroup $W \leq \Aut(\mbU_n(R))$ are the so-called idempotent automorphisms $\tau_e$, described in~\cite[65]{LevchukOriginal}. The definition of such a generator $\tau_e \in W$ depends on the choice of an idempotent element $e$ lying in the center $Z(R)$ of the ring $R$ [\emph{loc. cit.}]. Here we stress that the flip automorphism $\tau$ coincides with the generator $\tau_0 \in W$ associated to the trivial idempotent $e = 0$. Since $R$ is an integral domain we have $Z(R) = R$ and it furthermore only admits the trivial idempotents $0$ and $1$. However, the description in~[\emph{loc. cit.}] also shows that $\tau_1$ induces the identity map. Thus in our case $W = \gera{\tau}$, which is a group of order $2$.

Therefore \cite[Corollary~3]{LevchukOriginal} applied to integral domains distinct from $\F_2$ actually yields
\begin{equation} \label{eq:Levchuk} \Aut(\mbU_n(R)) = (((\mc{Z} \cdot \Inn(\mbU_n(R)) \cdot \mc{U}^{(c)}) \rtimes \gera{\tau}) \rtimes D) \rtimes \anel{\Aut}(R) \end{equation}
in the case $n \geq 5$.

Now, recalling that a semi-direct product $G = N \rtimes Q$ can be equivalently written as $G = Q \ltimes N$ and moreover that $G = N \cdot Q = Q \cdot N$, we may rewrite equality~\bref{eq:Levchuk} in the desired way. Indeed, since $\mc{Z}$ and $\Inn(\mbU_n(R))$ commute by Remark~\ref{obs:centralautomorphisms}, equality~\bref{eq:Levchuk} gives 
\begin{align*} \Aut(\mbU_n(R)) & = ( D \ltimes ((\Inn(\mbU_n(R)) \cdot \mc{Z} \cdot \mc{U}^{(c)}) \rtimes \gera{\tau}) ) \rtimes \anel{\Aut}(R) \\
& = D \cdot \Inn(\mbU_n(R)) \cdot \mc{Z} \cdot \mc{U}^{(c)} \cdot \gera{\tau} \cdot \anel{\Aut}(R).
\end{align*}
Lastly, since $\mbU_n(R)$ is invariant under the conjugation action of $\mbD_n(R)$, it is clear that that a composition of conjugations $\iota_d \circ \iota_x \in D \cdot \Inn(\mbU_n(R)) \subseteq \Aut(\mbU_n(R))$ --- where $d \in \mbD_n(R)$ is a diagonal matrix and $x \in \mbU_n(R)$ is a unipotent matrix --- may be rewritten as $\iota_y \circ \iota_{d} \in \Inn(\mbU_n(R)) \cdot D$. Simply set $y = dxd^{-1}$, so that 
\begin{align*}
\iota_d \circ \iota_x (a) & = \iota_d (xax^{-1}) = dxax^{-1}d^{-1} \\
& = (d x d^{-1}) d a d^{-1} (d x d^{-1})^{-1} = \iota_{dxd^{-1}} \circ \iota_y (a)
\end{align*}
for all $a \in \mbU_n(R)$. Therefore we may swap the factors $D$ and $\Inn(\mbU_n(R))$ in the previous description of $\Aut(\mbU_n(R))$ and obtain 
\begin{align*} \Aut(\mbU_n(R)) & = \Inn(\mbU_n(R)) \cdot D \cdot \mc{Z} \cdot \mc{U}^{(c)} \cdot \gera{\tau} \cdot \anel{\Aut}(R),
\end{align*}
as desired.

We now address the case $n = 4$. The starting point is a description of $\Aut(\mbU_4(R))$ which is slightly similar to the one from the case $n \geq 5$. 
More precisely, by \cite[Theorem~2]{LevchukOriginal} there exists a subgroup $\widetilde{S} \leq \Aut(\mbU_4(R))$ such that $\Aut(\mbU_4(R))$ decomposes as 
\[ \Aut(\mbU_4(R)) = ((\mc{Z} \cdot \Inn(\mbU_4(R)) \cdot \mc{U}^{(c)}) \rtimes (\widetilde{S} \cdot D)) \rtimes \anel{\Aut}(R),\]
which we may rewrite as 
\[ \Aut(\mbU_4(R)) = ((\widetilde{S} \cdot D) \ltimes (\mc{Z} \cdot \Inn(\mbU_4(R)) \cdot \mc{U}^{(c)})) \rtimes \anel{\Aut}(R).\]
We take a closer look at how the subgroup $\widetilde{S} \leq \Aut(\mbU_4(R))$ is defined. 
In~\cite[pp.~73 and~74]{LevchukOriginal}, 
Levchuk 
observes that a matrix $s = \left( \begin{smallmatrix} a_{1,1} & a_{1,2} \\ a_{2,1} & a_{2,2} \end{smallmatrix} \right) \in \SL_2(R)$ gives rise to an automorphism $\widetilde{s} \in \Aut(\mbU_4(R))$ as long as it satisfies the conditions 
\[ 2a_{1,1}a_{1,2} = 2a_{2,1}a_{2,2} = 0 \, \, \text{ and } \, \, a_{i,1}a_{i,2}(x^2-x)(y^2-y)=0 \]
for $i\in\{1,2\}$ and $x,y\in R$. 
More precisely, the map $\widetilde{s} \in \Aut(\mbU_4(R))$ induced by such an $s = \left( \begin{smallmatrix} a_{1,1} & a_{1,2} \\ a_{2,1} & a_{2,2} \end{smallmatrix} \right)$ is defined via the rules 
\[\widetilde{s}(e_{1,2}(r)) = e_{1,2}(a_{1,1}r) e_{3,4}(a_{1,2}r), \quad \widetilde{s}(e_{3,4}(r)) = e_{1,2}(a_{2,1}r) e_{3,4}(a_{2,2}r),\]
\[\widetilde{s}(e_{2,3}(r)) = e_{2,3}(r) e_{1,3}(a_{1,1}a_{2,1}(r^2-r))e_{2,4}(a_{1,2}a_{2,2}(r^2-r)), \] 
\[\widetilde{s}(e_{1,3}(r)) = e_{1,3}(a_{1,1}r) e_{2,4}(-a_{1,2}r)e_{1,4}(a_{1,1}a_{1,2}r^2),\]
\[\widetilde{s}(e_{2,4}(r)) = e_{1,3}(-a_{2,1}r) e_{2,4}(a_{2,2}r)e_{1,4}(a_{2,1}a_{2,2}r^2), \quad \text{ and}\] 
\[\widetilde{s}(e_{1,4}(r)) = e_{1,4}((a_{1,1}a_{2,2}+a_{1,2}a_{2,1})r). \]
Recalling that $R$ is an integral domain, note that if $\mathrm{char}(R) \neq 2$ the first requirement on such an $s = (a_{i,j}) \in \SL_2(R)$ implies that it must have the form 
\begin{equation} \label{eq:s-Automorphisms}
s = \left( \begin{smallmatrix} a_{1,1} & 0 \\ 0 & a_{2,2} \end{smallmatrix} \right) = \left( \begin{smallmatrix} a_{1,1} & 0 \\ 0 & a_{1,1}^{-1} \end{smallmatrix} \right)  \, \text{ or } \, s = \left( \begin{smallmatrix} 0 & a_{1,2} \\ -a_{2,1} & 0 \end{smallmatrix} \right) = \left( \begin{smallmatrix} 0 & a_{1,2} \\ -a_{1,2}^{-1} & 0 \end{smallmatrix} \right).
\end{equation}
If the integral domain $R$ has $\mathrm{char}(R)=2$ and $R \neq \F_2$, the second requirement on the entries of $s = (a_{i,j}) \in \SL_2(R)$ also implies that $a_{1,1}a_{1,2} = 0$ and $a_{2,1}a_{2,2}=0$. 

Thus, in our case, only matrices as in equation~\bref{eq:s-Automorphisms} produce such automorphisms $\widetilde{s} \in \Aut(\mbU_4(R))$. Going further, the automorphism $\widetilde{s}$ induced by $s = (a_{i,j})$ acts on the generators $e_{k,\ell}(r) \in \mbU_4(R)$ by the following rules. 
\[
\widetilde{s} : 
\begin{cases} e_{1,2}(r) & \mapsto e_{1,2}(a_{1,1}r) \cdot e_{3,4}(a_{1,2}r), \\ 
e_{2,3}(r) & \mapsto e_{2,3}(r), \\ 
e_{3,4}(r) & \mapsto e_{1,2}(a_{2,1}r) \cdot e_{2,2}(a_{2,2}r), \\
e_{1,3}(r) & \mapsto e_{1,3}(a_{1,1}r) \cdot e_{2,4}(-a_{1,2}r), \\
e_{2,4}(r) & \mapsto e_{1,3}(-a_{2,1}r) \cdot e_{2,4}(a_{2,2}r), \\
e_{1,4}(r) & \mapsto e_{1,4}((a_{1,1}a_{2,2}+a_{1,2}a_{2,1})r).
\end{cases}
\]
The subgroup $\widetilde{S} \leq \Aut(\mbU_4(R))$ is precisely the group generated by all automorphisms $\widetilde{s}$ as above~[\emph{loc. cit.}]. 

The point now is that the product $\widetilde{S} \cdot D$ is isomorphic to $\gera{\tau} \ltimes D$, where $\tau$ is the flip automorphism. In effect, since a matrix $s$ giving rise to $\widetilde{s} \in \widetilde{S}$ must be as in condition~\bref{eq:s-Automorphisms}, the action of $\widetilde{s}$ given previously reduces as follows.
\[
\text{For } s=\left( \begin{smallmatrix} a_{1,1} & 0 \\ 0 & a_{1,1}^{-1} \end{smallmatrix} \right) \text{ one has } \, \widetilde{s}: \begin{cases} e_{1,j}(r) & \mapsto e_{1,j}(a_{1,1}r) \, \text{ if } j \in \{ 2,3 \}, \\ 
e_{k,\ell}(r) & \mapsto e_{k,\ell}(r) \, \text{ if } (k,\ell) \in \{ (2,3), (1,4)\}, \\ 
e_{i,4}(r) & \mapsto e_{i,4}(a_{1,1}^{-1}r) \, \text{ if } i \in \{ 2,3 \},
\end{cases}
\]
\[
\text{whereas for } \, s=\left( \begin{smallmatrix} 0 & a_{1,2} \\ -a_{1,2}^{-1} & 0 \end{smallmatrix} \right) \, \text{ one has } \, \widetilde{s} : 
\begin{cases} e_{1,2}(r) & \mapsto e_{3,4}(a_{1,2}r), \\ 
e_{2,3}(r) & \mapsto e_{2,3}(r), \\ 
e_{3,4}(r) & \mapsto e_{1,2}(-a_{1,2}^{-1}r), \\
e_{1,3}(r) & \mapsto e_{2,4}(-a_{1,2}r), \\
e_{2,4}(r) & \mapsto e_{1,3}(a_{1,2}^{-1}r), \\
e_{1,4}(r) & \mapsto e_{1,4}(-r).
\end{cases}
\]
Using the above descriptions, direct matrix computations show that the map $\widetilde{s}$ is in fact a composition of the flip automorphism and conjugation by a diagonal matrix. More precisely,  
\[\widetilde{s} = \tau \circ \iota_d,\]
where $\iota_d \in D$ is conjugation by the diagonal matrix 
\[ d = 
\begin{cases} \left( \begin{smallmatrix} a_{1,1} & & & \\ & 1 & & \\ & & 1 & \\ & & & a_{1,1} \end{smallmatrix} \right), & \text{ in case } s = \left( \begin{smallmatrix} a_{1,1} & 0 \\ 0 & a_{1,1}^{-1} \end{smallmatrix} \right), \text{ or } \\
\left( \begin{smallmatrix} a_{1,2} & & & \\ & 1 & & \\ & & 1 & \\ & & & -a_{1,2} \end{smallmatrix} \right) & \text{ in case } s = \left( \begin{smallmatrix} 0 & a_{1,2} \\ -a_{1,2}^{-1} & 0 \end{smallmatrix} \right).
\end{cases}
\]
Thus the only generator of $\widetilde{S} \cdot D$ not lying in $D$ is the flip automorphism. One readily checks that $D$ is normal in $\widetilde{S} \cdot D$, and that the product splits as $\gera{\tau} \ltimes D$. 

Hence, one may proceed similarly to the last steps of the case $n \geq 5$ to obtain the same equality~\bref{eq:Levchuk}. Indeed,  
\begin{align*}
    \Aut(\mbU_4(R)) & = ((\widetilde{S} \cdot D) \ltimes (\mc{Z} \cdot \Inn(\mbU_4(R)) \cdot \mc{U}^{(c)})) \rtimes \anel{\Aut}(R) \\
    & = ((\gera{\tau} \ltimes D) \ltimes (\mc{Z} \cdot \Inn(\mbU_4(R)) \cdot \mc{U}^{(c)})) \rtimes \anel{\Aut}(R) \\
    & = (\gera{\tau} \ltimes (D \ltimes (\mc{Z} \cdot \Inn(\mbU_4(R)) \cdot \mc{U}^{(c)}))) \rtimes \anel{\Aut}(R) \\
    & = ((D \ltimes (\Inn(\mbU_4(R)) \cdot \mc{Z} \cdot \mc{U}^{(c)})) \rtimes \gera{\tau}) \rtimes \anel{\Aut}(R) \\
    & = D \cdot \Inn(\mbU_4(R)) \cdot \mc{Z} \cdot \mc{U}^{(c)} \cdot \gera{\tau} \cdot \anel{\Aut}(R) \\
    & = \Inn(\mbU_4(R)) \cdot D \cdot \mc{Z} \cdot \mc{U}^{(c)} \cdot \gera{\tau} \cdot \anel{\Aut}(R).
\end{align*}
This finishes the proof. \qed

\section{Restatement and proof of Theorem~\ref{thm:Additive}}\label{sec:provadothmB} 
Before recalling our main technical theorem we need a bit more notation. Given a {ring} automorphism $\alpha \in \anel{\Aut}(R)$ we consider the following automorphisms on the underlying additive group $\Addi(R) = (R,+)$. 
Firstly, we let $\addi{\alpha} \in \Aut(\Addi(R))$ denote the same automorphism $\alpha$ now viewed as a {group} automorphism of $\Addi(R)$, i.e., $\addi{\alpha}(r)$ is just $\alpha(r)$ for any $r \in R$. Secondly, in the direct product $\Addi(R) \times \Addi(R)$ of two copies of $(R,+)$ we let $\tau_\alpha \in \Aut(\Addi(R) \times \Addi(R))$ denote the `flip' automorphism induced by~$\alpha$, that is, $\tau_\alpha((r,s)) = (\addi{\alpha}(s),\addi{\alpha}(r))$.

\theoremstyle{plain}
\newtheorem*{thmB}{Theorem~\ref{thm:Additive}}
\begin{thmB} Let $R$ be an integral domain with a finitely generated group of units $\Mult(R)$. Assume that both $R(\addi{\alpha})$ and $R(\tau_\alpha)$ are infinite for all $\alpha \in \anel{\Aut}(R)$, where $\addi{\alpha}$ and $\tau_\alpha$ are as above. Then the groups $\mbB_n(R)$ and $\PB_n(R)$ have property $\Ri$ for all $n \geq 4$.
\end{thmB}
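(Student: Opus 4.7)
The plan is to reduce the $\mbB_n(R)$ case to that of $\PB_n(R)$ and then attack $\PB_n(R)$ via Lev\v{c}uk's theorem. Because $Z_n(R)$ is characteristic in $\mbB_n(R)$ (Remark~\ref{obs:centerBn}), any automorphism $\phee$ of $\mbB_n(R)$ descends to $\bar\phee \in \Aut(\PB_n(R))$, and Lemma~\ref{lem:desempre}\bref{PHEEvsPHEEbarra} gives $R(\phee) \geq R(\bar\phee)$. So I would fix $\phee \in \Aut(\PB_n(R))$ and aim to show $R(\phee) = \infty$. By Remark~\ref{rmk:Uncharacteristic}, $\mbU_n(R)$ is the Hirsch--Plotkin radical of $\PB_n(R)$, so it is characteristic and $\phee$ restricts to an automorphism $\psi$ of $\mbU_n(R)$. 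Theorem~\ref{thm:Levchuk} writes $\psi = \iota_h \circ \mc{z} \circ \sigma \circ \tau^\veps \circ \alpha_\ast$ for some $h \in \PB_n(R)$, $\mc{z} \in \mc{Z}$, $\sigma \in \mc{U}^{(c)}$, $\veps \in \{0,1\}$ and $\alpha \in \anel{\Aut}(R)$; using Lemma~\ref{lem:ignoreinner} I would replace $\phee$ by $\iota_{h^{-1}} \circ \phee$ (which has the same Reidemeister number) to assume $\psi = \mc{z} \circ \sigma \circ \tau^\veps \circ \alpha_\ast$ from now on.

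Next I would descend to $Q := \PB_n(R)/N$, where $N := [\mbU_n(R),\mbU_n(R)]$ is characteristic in $\PB_n(R)$ as the commutator subgroup of a characteristic subgroup. One has $Q \cong \mbU_n(R)^{\mathrm{ab}} \rtimes A$ with $A := \mbD_n(R)/Z_n(R)$, and $A$ is finitely generated abelian because $\Mult(R)$ is. The induced $\tilde\phee \in \Aut(Q)$ satisfies $R(\phee) \geq R(\tilde\phee)$ by Lemma~\ref{lem:desempre}\bref{PHEEvsPHEEbarra}; it restricts to $\bar\psi \in \Aut(\mbU_n(R)^{\mathrm{ab}})$ and descends further to $\bar{\tilde\phee} \in \Aut(A)$. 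Applying Lemma~\ref{lem:ReidemeisterAbelian} yields a clean dichotomy: either $\bar{\tilde\phee}$ fixes infinitely many elements and so $R(\bar{\tilde\phee}) = \infty$, which forces $R(\tilde\phee) = \infty$ by Lemma~\ref{lem:desempre}\bref{PHEEvsPHEEbarra}; or $R(\bar{\tilde\phee}) < \infty$, in which case Lemma~\ref{lem:desempre}\bref{PHEEbarrafinite_and_PHEE'infinite} reduces everything to verifying that $R(\bar\psi) = \infty$.

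The core computation is then $\bar\psi$ on $\mbU_n(R)^{\mathrm{ab}} \cong \bigoplus_{i=1}^{n-1} \mc{E}_{i,i+1} \cong R^{n-1}$. Since $\mc{z}$ takes values in $Z(\mbU_n(R)) = \mc{E}_{1,n}(R) \subseteq N$, it becomes trivial modulo $N$. For $n \geq 4$, the correction terms $e_{2,n}$, $e_{1,n}$ and $e_{1,n-1}$ produced by the type-$\Sigma$ automorphism all lie in $N$, so $\sigma$ is trivial modulo $N$ as well. The flip $\tau$ sends $e_{i,i+1}(r)$ to $e_{n-i,n-i+1}(r)$ (the sign $(-1)^{j-i-1}$ equals $+1$ when $j = i+1$), and $\alpha_\ast$ applies $\addi\alpha$ coordinatewise. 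Hence $\bar\psi$ is the coordinatewise $\addi\alpha$ if $\veps = 0$, or that same map followed by the coordinate reversal $(r_1,\ldots,r_{n-1}) \mapsto (r_{n-1},\ldots,r_1)$ if $\veps = 1$. In the first case, projecting onto any single $\mc{E}_{i,i+1}$ yields a quotient on which $\bar\psi$ acts as $\addi\alpha$, so $R(\bar\psi) \geq R(\addi\alpha) = \infty$ by the standing hypothesis. In the second, the condition $n \geq 4$ guarantees at least one genuine pair $(\mc{E}_{i,i+1}, \mc{E}_{n-i,n-i+1})$ with $i < n-i$; projecting onto this pair, $\bar\psi$ acts as $(r,s) \mapsto (\addi\alpha(s), \addi\alpha(r))$, which is exactly the $\tau_\alpha$ from the statement, so $R(\bar\psi) \geq R(\tau_\alpha) = \infty$.

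The hard part will be the final book-keeping: I must verify that both $\mc{z}$ and $\sigma$ really do die in the abelianization of $\mbU_n(R)$ (this is exactly where the assumption $n \geq 4$ is needed), and that, after composing the ring automorphism with a possible flip, the restriction of $\bar\psi$ to the chosen pair of factors coincides on the nose with the flip automorphism $\tau_\alpha$ from the statement, with no stray signs or scalar twists introduced by the commutator relations~\bref{rel:commutators}. Once these routine but essential identifications are in place, the Reidemeister hypotheses on $\addi\alpha$ and $\tau_\alpha$ plug in directly, completing the proof.
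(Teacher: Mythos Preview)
Your argument is correct and follows essentially the same route as the paper: reduce to $\PB_n(R)$, invoke Lev\v{c}uk's decomposition on the characteristic subgroup $\mbU_n(R)$, strip off the inner part via Lemma~\ref{lem:ignoreinner}, use the finitely generated diagonal to reduce (via Lemmata~\ref{lem:ReidemeisterAbelian} and~\ref{lem:desempre}\bref{PHEEbarrafinite_and_PHEE'infinite}) to the abelianization of $\mbU_n(R)$, where $\mc{z}$ and $\sigma$ vanish and only $\tau^\veps \circ \alpha_\ast$ survives. The only cosmetic difference is the final projection: the paper always mods out onto the ``middle'' block $E_{\mathrm{mid}}$ (a single factor when $n$ is even, a swapped pair when $n$ is odd), whereas you case-split on $\veps$ and project onto an arbitrary single factor or an arbitrary swapped pair---both choices are valid and yield $\addi{\alpha}$ or $\tau_\alpha$ on the nose.
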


For the remaining of Section~\ref{sec:provadothmB}, we assume that $R$ is as in the statement and $n \geq 4$. Note that the hypotheses force $R \neq \F_2$, so that Levchuk's theorem will be applicable.

Because $\PB_n(R)$ is a characteristic quotient of $\mbB_n(R)$, Lemma~\ref{lem:desempre} shows that $\mbB_n(R)$ will have property $\Ri$ if $\PB_n(R)$ does so. To prove Theorem~\ref{thm:Additive} it thus suffices to show that all $\phee \in \Aut(\PB_n(R))$ satisfy $R(\phee)=\infty$. This analysis will be done in several steps. The first observation is the following. 

\begin{lem}\label{lem:useless}
Let $G$ be a group and let $H \leq G$ be a characteristic subgroup. Suppose there exists a 
subset $A \subseteq \Aut(H)$ 
such that 
$\Aut(H) = p(\Inn(G)) \cdot A$, 
where 
$p(\Inn(G))$ is the image of $\Inn(G)$ under the obvious map $p : \Inn(G) \to \Aut(H)$, $p(\phee) = \phee\vert_{H}$. 
Then any automorphism $\phee \in \Aut(G)$ can be written as a product $\phee = \iota \circ \psi$ where $\iota \in \Inn(G)$ and $p(\psi)=\psi|_H \in A$.
\end{lem}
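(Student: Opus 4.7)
The plan is to use the decomposition hypothesis applied to the restriction of $\phee$ to $H$, and then to correct $\phee$ by an inner automorphism of the whole group $G$. Since $H$ is characteristic in $G$, any $\phee \in \Aut(G)$ restricts to an automorphism $\phee|_H \in \Aut(H)$; this is the only place where characteristicity of $H$ enters, and it is the starting point of the argument.

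Next, I would use the standing hypothesis $\Aut(H) = \Inn(G) \cdot A$ to write
\[ \phee|_H = \iota_g|_H \circ \alpha \]
for some $g \in G$ (so that $\iota_g \in \Inn(G)$ when viewed in $\Aut(H)$) and some $\alpha \in A$. The key observation is that $\iota_g$ is a priori defined as an automorphism of the whole group $G$, not merely of $H$: its image in $\Aut(H)$ is just the restriction $\iota_g|_H$. We can therefore lift the ``inner part'' of the decomposition of $\phee|_H$ to an automorphism of $G$.

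The construction then writes itself. Set $\iota := \iota_g \in \Inn(G)$ and $\psi := \iota_g^{-1} \circ \phee \in \Aut(G)$. Trivially, $\phee = \iota \circ \psi$. Restricting to $H$ (again using that $H$ is characteristic, so that $\psi|_H$ makes sense as an automorphism of $H$), we compute
\[ \psi|_H \;=\; \iota_g^{-1}|_H \circ \phee|_H \;=\; \iota_g^{-1}|_H \circ \iota_g|_H \circ \alpha \;=\; \alpha \;\in\; A, \]
which is exactly the required property.

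There is no real obstacle here; the only point requiring minor care is the bookkeeping between an inner automorphism of $G$ and its image under the restriction map $\Inn(G) \to \Aut(H)$ (which need not be injective, but this is irrelevant for the argument). In other words, the lemma amounts to observing that the decomposition $\Aut(H) = \Inn(G) \cdot A$ can be ``pulled back'' to $\Aut(G)$ along the restriction homomorphism, with the inner factor being realized by an actual inner automorphism of $G$.
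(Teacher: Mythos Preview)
Your proof is correct and follows essentially the same approach as the paper's: decompose the restriction to $H$ using the hypothesis, then lift the inner factor back to $\Aut(G)$. In fact your version is slightly more streamlined---the paper first writes $\phee = \iota_0 \circ \tau$ with $\tau$ in a right transversal for $\Inn(G)$ in $\Aut(G)$ before decomposing $\tau|_H$, but this preliminary step is unnecessary and you rightly apply the decomposition directly to $\phee|_H$.
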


\begin{proof}
For the moment let us use $\iota_h$ and $\kappa_g$ to represent conjugation by $h \in H$ and by $g \in G$, respectively. Since $H\leq G$, any inner automorphism $\iota_h \in \Inn(H)$ obviously extends to $\kappa_h \in \Inn(G)$ by setting $\kappa_h(g)=hgh^{-1}$. Denoting by $\overline{\Inn(H)} = \{\kappa_h \mid h \in H\} \leq \Aut(G)$, we have $p(\kappa_h) = \iota_h$ so that $p\left(\overline{\Inn(H)}\right) = \Inn(H)$ and $\Inn(H)$ is canonically isomorphic to its copy $\overline{\Inn(H)} \leq \Aut(G)$. 
Note moreover that, since $H\leq G$ is characteristic, given any $\Phi, \Psi \in \Aut(G)$ one has $(\Phi \circ \Psi)|_H = \Phi|_H \circ \Psi|_H$.

Let $\phee \in \Aut(G)$ be given, and choose a set of representatives $T \subseteq \Aut(G)$ for the distinct right cosets of $\Inn(G)$ in $\Aut(G)$, 
so that 
$\phee = \kappa_0 \circ \tau$ for some $\kappa_0 \in \Inn(G)$ 
and $\tau \in T$. Projecting to $\Aut(H)$ we obtain $p(\kappa_0 \circ \tau) = p(\kappa_0) \circ \tau\vert_H$. By hypothesis, the restriction $p(\tau)=\tau|_H$ can be written as 
$\tau\vert_H = p(\kappa_1) \circ \alpha$ where $\kappa_1 \in \Inn(G)$ 
and $\alpha \in A$. Now define 
$\psi = \kappa_1^{-1} \circ \tau \in \Aut(G)$ and $\kappa=\kappa_0 \circ \kappa_1 \in \Inn(G)$. 
One then has
\[\phee = \kappa_0 \circ \tau = (\kappa_0 \circ \kappa_1) \circ (\kappa_1^{-1} \circ \tau) = \kappa \circ \psi\] 
and $p(\psi)=\psi\vert_H = (\kappa_1^{-1} \circ \tau)\vert_H = \kappa_1^{-1}\vert_H \circ \tau\vert_H = p(\kappa_1)^{-1} \circ p(\kappa_1) \circ \alpha = \alpha \in A$, 
as desired.
\end{proof}

Lemma~\ref{lem:useless} will be used in conjunction with Theorem~\ref{thm:Levchuk} to rewrite each $\phee \in \Aut(\PB_n(R))$ as $\phee = \iota \circ \psi$ with $\iota \in \Inn(\PB_n(R))$ and where the restriction $\psi'$ of $\psi \in \Aut(\PB_n(R))$ to the characteristic subgroup $\mbU_n(R)$ has a convenient description without conjugation as one of its factors. 
The next steps, given in Section~\ref{sec:gpA}, shall yield $R(\psi')=\infty$. Finally, we use this equality in Section~\ref{sec:provaThmB} to show that $R(\phee) = \infty$. 

\subsection{A subset of automorphisms of $\mbU_n(R)$}\label{sec:gpA} 
As shown in Theorem~\ref{thm:Levchuk}, one may write 
\[\Aut(\mbU_n(R)) = \Inn(\mbU_n(R)) \cdot D \cdot \mc{Z} \cdot \mc{U}^{(c)} \cdot \gera{\tau} \cdot \anel{\Aut}(R). \] 
In this section, we consider the 
subset $\mc{A} = \mc{Z} \cdot \mc{U}^{(c)} \cdot \gera{\tau} \cdot \anel{\Aut}(R) \subseteq \Aut(\mbU_n(R))$. 
We claim that, under the hypotheses of Theorem~\ref{thm:Additive}, 
\begin{align} \label{eq:psilinha}
R(\psi') = \infty \, \text{ for all } \, \psi' \in \mc{A}.
\end{align}

By definition, an automorphism $\psi' \in \mc{A}$ is of the form $ \psi' = \mc{z} \circ \sigma \circ \tau^\veps \circ \alpha_\ast$ with  $\veps \in \set{0,1}$, where $\mc{z} \in \mc{Z}$, $\sigma \in \mc{U}^{(c)}$, $\alpha_\ast \in \anel{\Aut}(R)$; cf. Section~\ref{sec:Levchuk}.

Since $\gamma_2(\mbU_n(R)) = [\mbU_n(R), \mbU_n(R)]$ is characteristic in $\mbU_n(R)$, the map $\psi'$ induces an automorphism $\barra{\psi'} = \barra{\mc{z} \circ \sigma \circ \tau^\veps \circ \alpha_\ast}$ on the abelianization $\mbU_n(R)^{\mathrm{ab}}$ and moreover $R(\psi') \geq R(\barra{\psi'})$; c.f.\ Lemma~\ref{lem:desempre}. It suffices to check that $R(\barra{\psi'}) = \infty$. 
The following two lemmata simplify this task by assuring that $\barra{\mc{z} \circ \sigma} = \id$, so that $\barra{\psi'} = \barra{\tau^\veps \circ \alpha_\ast}$ and hence $R(\barra{\psi'}) = R(\barra{\tau^\veps \circ \alpha_\ast})$.

\begin{lem} \label{obs:centralautomorphisms2}
For $n \geq 3$, any central automorphism $\mc{z} \in \mc{Z} \leq \Aut(\mbU_n(R))$ induces the identity on the abelianization $\mbU_n(R)^{\mathrm{ab}}$.
\end{lem}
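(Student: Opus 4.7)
The plan is to reduce the claim to its generators and then use the elementary commutator relations of Section~\ref{sec:LemmataRinfty} to show each generator acts trivially modulo $[\mbU_n(R),\mbU_n(R)]$.

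First I would recall from Section~\ref{Levchuk:central} that $\mc{Z}$ is generated by the maps $\zeta_i(\lambda)$, $i \in \{1,\ldots,n\}$ and $\lambda \in \End(\Addi(R))$, acting on $x = (a_{i,j}) \in \mbU_n(R)$ by right-multiplication by the central element $\ekl{1,n}(\lambda(a_{i,i+1}))$. Since any group automorphism of $\mbU_n(R)$ that acts trivially on $\mbU_n(R)^{\mathrm{ab}}$ forms a subgroup of $\Aut(\mbU_n(R))$, and since every $\mc{z} \in \mc{Z}$ is a product of generators $\zeta_i(\lambda)$, it suffices to check that each $\zeta_i(\lambda)$ induces the identity on $\mbU_n(R)^{\mathrm{ab}}$.

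Next, using the hypothesis $n \geq 3$, I would observe that $\mc{E}_{1,n}(R) \subseteq [\mbU_n(R),\mbU_n(R)]$: picking any intermediate index $1 < k < n$ (which exists because $n \geq 3$), the commutator relations \bref{rel:commutators} yield
\[ [\ekl{1,k}(r),\ekl{k,n}(1)] = \ekl{1,n}(r) \quad \text{for every } r \in R, \]
so each generator $\ekl{1,n}(r)$ of $\mc{E}_{1,n}(R)$ is a commutator. Consequently, for any $x \in \mbU_n(R)$ the image of $\zeta_i(\lambda)(x) = x \cdot \ekl{1,n}(\lambda(a_{i,i+1}))$ in $\mbU_n(R)^{\mathrm{ab}}$ agrees with that of $x$.

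Combining these two observations gives $\barra{\zeta_i(\lambda)} = \id$ on $\mbU_n(R)^{\mathrm{ab}}$ for all generators of $\mc{Z}$, and hence $\barra{\mc{z}} = \id$ for every $\mc{z} \in \mc{Z}$. There is no genuine obstacle here: the only small point that needs care is the use of the hypothesis $n \geq 3$ to guarantee an index $k$ with $1 < k < n$, without which the displayed commutator identity would be unavailable and the statement would fail (e.g.\ for $n = 2$ the abelianization already coincides with $\mbU_2(R) \cong \Addi(R)$ and central automorphisms are essentially trivial for a different reason).
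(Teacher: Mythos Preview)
Your proof is correct and follows essentially the same approach as the paper's. The paper's version is terser: it observes directly that any $\mc{z} \in \mc{Z}$ acts as multiplication by an element of $Z(\mbU_n(R)) = \mc{E}_{1,n}(R) = \gamma_{n-1}(\mbU_n(R)) \subseteq \gamma_2(\mbU_n(R))$ for $n \geq 3$, without first reducing to generators, but the underlying idea is identical.
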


\begin{proof}
In fact a stronger statement holds: $\mc{z}$ induces the identity on each factor $\gamma_k(\mbU_n(R)) / \gamma_{k+1}(\mbU_n(R))$ for $k < n-1$. This is immediate from the definition since $\mc{z}$ acts as multiplication by an element lying in the center $Z(\mbU_n(R)) = \mc{E}_{1,n}(R)$. (Recall that $\mc{z}$ induces an automorphism $\barra{\mc{z}}_k$ of the quotient $\gamma_k(\mbU_n(R)) / \gamma_{k+1}(\mbU_n(R))$ because the lower central series $\gamma_i(\mbU_n(R))$ is a characteristic series.) In particular, $\barra{\mc{z}}_1 = \id$ on the abelianization $\mbU_n(R)^{\mathrm{ab}} = \gamma_1(\mbU_n(R)) / \gamma_{2}(\mbU_n(R)) = \mbU_n(R) / [\mbU_n(R),\mbU_n(R)]$.
\end{proof}

\begin{lem} \label{obs:sigmaautomorphisms}
For $n \geq 3$, any extremal automorphism $\sigma \in \mathcal{U}^{(c)}$ 
induces the identity on the abelianization $\mbU_n(R)^{\mathrm{ab}}$.
\end{lem}

\begin{proof}
This is also straightforward from the definition, for $e_{2,n}(r)$, $e_{1,n-1}(r)$ and $e_{1,n}(r)$ all belong to $\gamma_2(\mbU_n(R)) = [\mbU_n(R),\mbU_n(R)]$ for all $r \in R$, whence any $\sigma \in \mc{U}^{(c)}$ induces a trivial action on $\mbU_n(R)^{\mathrm{ab}} = \mbU_n(R) / [\mbU_n(R), \mbU_n(R)] = \gamma_1(\mbU_n(R)) / \gamma_2(\mbU_n(R))$. 
\end{proof}

In the following step --- Proposition~\ref{obs:flip} --- we construct a $\barra{\tau^\veps \circ \alpha_\ast}$-invariant subgroup $N$ of the abelianization $\mbU_n(R)^{\mathrm{ab}}$ such that the quotient $Q=\mbU_n(R)^{\mathrm{ab}}/N$ is also $\barra{\tau^\veps \circ \alpha_\ast}$-invariant. Then, in Proposition~\ref{pps:autA}, we derive equation~\bref{eq:psilinha} by showing that the induced automorphism $\barra{\barra{\tau^\veps \circ \alpha_\ast}}$ on $Q$ is such that $R\left(\barra{\barra{\tau^\veps \circ \alpha_\ast}}\right)=\infty$, hence $R(\barra{\tau^\veps \circ \alpha_\ast}) 
=\infty$ by Lemma~\ref{lem:desempre}.

\begin{pps}\label{obs:flip} The abelianization $\mbU_n(R)^{\mathrm{ab}}$ has two $\barra{\tau^\veps \circ \alpha_\ast}$-invariant subgroups --- $E_{\mathrm{mid}}$ and $C_{\mathrm{mid}}$ --- that are  group-theoretic complements of one another. In particular, the map 
{$\barra{\tau^\veps \circ \alpha_\ast}$} further induces automorphisms on both these groups.
\end{pps}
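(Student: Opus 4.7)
The plan is to build a direct sum decomposition of $\mbU_n(R)^{\mathrm{ab}}$ whose two factors are preserved by both $\overline{\tau}$ and $\overline{\alpha_\ast}$, chosen so that the induced action on one of the factors is precisely the map whose Reidemeister number is assumed infinite in Theorem~\ref{thm:Additive}. First, I would identify the abelianization: applying the commutator relations~\bref{rel:commutators}, for any $1 \leq i < k < j \leq n$ one has $\ekl{i,j}(r) = [\ekl{i,k}(1), \ekl{k,j}(r)] \in \gamma_2(\mbU_n(R))$, so the derived subgroup absorbs every non-superdiagonal elementary matrix. Combined with the normal form~\bref{rel:elementsofUn}, this yields a canonical isomorphism
\[\mbU_n(R)^{\mathrm{ab}} \cong \bigoplus_{i=1}^{n-1} \overline{\sbgpekl{i,i+1}(R)} \cong \Addi(R)^{n-1},\]
with each superdiagonal subgroup $\sbgpekl{i,i+1}(R)$ projecting isomorphically onto one direct summand.

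Next I would read off how $\tau$ and $\alpha_\ast$ act on this decomposition. By Remark~\ref{obs:ringautomorphisms}, $\overline{\alpha_\ast}$ preserves every summand and acts as $\addi{\alpha}$ on each factor. Applying the formula in Section~\ref{Levchuk:flip} with $j = i+1$, the flip sends $\ekl{i,i+1}(r)$ to $\ekl{n-i,n-i+1}(r)$ (with trivial sign, since $(-1)^{j-i-1}=1$), so $\overline{\tau}$ permutes the summands via the involution $i \mapsto n-i$ on $\{1, \ldots, n-1\}$.

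I would then define the two invariant subgroups according to the parity of $n$. When $n = 2m$ is even, the involution has the unique fixed point $m$; set $E_{\mathrm{mid}} := \overline{\sbgpekl{m,m+1}(R)}$ and $C_{\mathrm{mid}} := \bigoplus_{i \neq m} \overline{\sbgpekl{i,i+1}(R)}$. When $n = 2m+1$ is odd, the middle orbit of the involution is the swapped pair $\{m, m+1\}$; set $E_{\mathrm{mid}} := \overline{\sbgpekl{m,m+1}(R)} \oplus \overline{\sbgpekl{m+1,m+2}(R)}$ and $C_{\mathrm{mid}} := \bigoplus_{i \notin \{m, m+1\}} \overline{\sbgpekl{i,i+1}(R)}$. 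In both cases the index set defining each of $E_{\mathrm{mid}}$ and $C_{\mathrm{mid}}$ is invariant under $i \mapsto n-i$, so both subgroups are $\overline{\tau}$-stable; they are clearly $\overline{\alpha_\ast}$-stable as well, hence $\barra{\tau^\veps \circ \alpha_\ast}$-invariant for every $\veps \in \{0,1\}$. The induced automorphisms on $E_{\mathrm{mid}}$ and $C_{\mathrm{mid}}$ then arise by restriction.

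I expect no substantive obstacle beyond the bookkeeping forced by the parity of $n$: the proposition amounts to choosing the right invariant summands. The choice is guided by what comes next --- the induced action on $E_{\mathrm{mid}}$ reduces to $\addi{\alpha}$ in the even case and to $\tau_\alpha$ in the odd case with $\veps = 1$, matching precisely the hypotheses $R(\addi{\alpha}) = R(\tau_\alpha) = \infty$ of Theorem~\ref{thm:Additive} that Proposition~\ref{pps:autA} will invoke to conclude $R(\barra{\psi'}) = \infty$.
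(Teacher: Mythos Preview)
Your proposal is correct and follows essentially the same approach as the paper: you build the identical subgroups $E_{\mathrm{mid}}$ and $C_{\mathrm{mid}}$ (the paper packages both parities into the single formula $m = \lceil \tfrac{n-1}{2}\rceil$, but the resulting subgroups coincide with yours), and you verify invariance under $\overline{\tau}$ and $\overline{\alpha_\ast}$ by the same arguments---the coordinate-permutation description of $\overline{\tau}$ and Remark~\ref{obs:ringautomorphisms} for $\overline{\alpha_\ast}$.
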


\begin{proof}
We start by constructing the `middle' subgroup $E_{\mathrm{mid}}$. 
As a consequence of the known shape of $\mbU_n(R)$ and its lower central series, we freely identify $\mbU_n(R)$ isomorphically with the cartesian product $\prod_{i=1}^{n-1} \mc{E}_{i,i+1}(R)$ in the obvious way throughout this proof. 
Under this identification, an element of $\mbU_n(R)^{\mathrm{ab}} \cong \prod_{i=1}^{n-1} \mc{E}_{i,i+1}(R)$ is a tuple $(e_{1,2}(r_{1,2}), \ldots e_{n-1,n}(r_{n-1,n}))$. 
For simplicity, we write 
\[\mathbf{e}_m(r) := (e_{1,2}(r_{1,2}), \ldots e_{n-1,n}(r_{n-1,n})),\] 
whenever $r_{i,i+1}=0$ for all $i \neq m$ and $r_{m,m+1}=r$.

We define the `middle' subgroup by setting 
\[ E_{\mathrm{mid}} = \langle\{ \mathbf{e}_m(r), \mathbf{e}_{n - m}(r) \mid r \in R \} \rangle \leq \prod_{i=1}^{n-1} \mc{E}_{i,i+1}(R),\]
where $m = \left\lceil \frac{n-1}{2} \right\rceil$ and $\left\lceil\cdot\right\rceil$ denotes the ceiling function. 
That is, $E_{\mathrm{mid}}$ is just the canonical image of the product 
$\mc{E}_{m, m + 1}(R) \cdot \mc{E}_{n - m, n - m + 1}(R) \leq \mbU_n(R)$ 
in the abelianization $\mbU_n(R)^{\mathrm{ab}} \cong \prod_{i=1}^{n-1} \mc{E}_{i,i+1}(R)$.

For example --- and to justify the notation `middle' --- consider the cases $n = 4$ and $n = 5$. In the former, one has $m = \left\lceil \frac{3}{2} \right\rceil  = 2$ but also $n - m = 2$, so that $\mc{E}_{m, m + 1}(R) = \mc{E}_{n - m, n - m + 1}(R) = \mc{E}_{2, 3}(R)$, hence $E_{\mathrm{mid}}$ is isomorphic to the elementary subgroup $\mc{E}_{2,3}(R)$ found in the the middle of the superdiagonal of $\mbU_4(R)$. In case $n = 5$, one has $m = \left\lceil \frac{4}{2} \right\rceil  = 2$ and $n-m=5-2=3$, so that $E_{\mathrm{mid}}$ is the direct product $\mc{E}_{2,3}(R) \times \mc{E}_{3,4}(R)$ of the two `middle' subgroups of the superdiagonal of $\mbU_5(R)$.

It is straightforward that the middle subgroup $E_{\mathrm{mid}}$ has the group $C_{\mathrm{mid}} = \prod_{i=1}^{n-1} E_{i,i+1}$ as (group-theoretic) complement, where 
\[ E_{i,i+1} = \begin{cases} \mc{E}_{i,i+1}(R), & \text{ if } i \notin \set{ m, \, n - m }, \\
\set{1} & \text{ otherwise}. \end{cases} \]

Let us have a closer look at the automorphism $\barra{\tau}$ induced by the flip automorphism $\tau$ on the abelianization $\mbU_n(R)^{\mathrm{ab}} \cong \prod_{i=1}^{n-1} \mc{E}_{i,i+1}(R)$.
We observe that $\tau$ does not alter the signs of the first superdiagonal entries of matrices of $\mbU_n(R)$. It also maps any subgroup $\mc{E}_{i,i+1}(R)$ on the superdiagonal isomorphically onto another subgroup $\mc{E}_{j,j+1}(R)$ on the superdiagonal.

Using these facts, one concludes that $\barra{\tau}$ acts on $\mbU_n(R)^{\mathrm{ab}}$ via 
\begin{align} \label{tau1} \begin{split} \barra{\tau}( (e_{1,2}(r_1), \, e_{2,3}(r_2), \ldots, \, e_{n-2,n-1}(r_{n-2}), \, e_{n-1,n}(r_{n-1}) )) & = \\
 (e_{1,2}(r_{n-1}), \, e_{2,3}(r_{n-2}), \ldots, \, e_{n-2,n-1}(r_{2}), \, e_{n-1,n}(r_{1}) ) & \end{split} \end{align}
 flipping coordinates, with $\mbU_n(R)^{\mathrm{ab}}$ identified as before as the direct product $\mc{E}_{1,2}(R) \times \mc{E}_{2,3}(R) \times \cdots \times \mc{E}_{n-1,n}(R)$.

The action~\bref{tau1} of the flip automorphism $\tau$ thus shows that both $E_{\mathrm{mid}}$ and $C_{\mathrm{mid}}$ are $\barra{\tau}$-invariant. The fact that both $E_{\mathrm{mid}}$ and $C_{\mathrm{mid}}$ are $\overline{\alpha_\ast}$-invariant follows from Remark~\ref{obs:ringautomorphisms}.
\end{proof}

\begin{pps}\label{pps:autA} Under the hypotheses of Theorem~\ref{thm:Additive}, each automorphism $\psi' \in \mc{A}$ satisfies $R(\psi')=\infty$. 
\end{pps}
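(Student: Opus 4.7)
The plan is to successively pass to invariant quotients until arriving at an automorphism explicitly covered by the hypothesis of Theorem~\ref{thm:Additive}. Write $\psi' = \mc{z} \circ \sigma \circ \tau^\veps \circ \alpha_\ast$ with $\mc{z} \in \mc{Z}$, $\sigma \in \mc{U}^{(c)}$, $\veps \in \set{0,1}$ and $\alpha \in \anel{\Aut}(R)$. I would first pass to the induced automorphism $\barra{\psi'}$ on the abelianization $\mbU_n(R)^{\mathrm{ab}}$: Lemma~\dref{lem:desempre}{PHEEvsPHEEbarra} gives $R(\psi') \geq R(\barra{\psi'})$, while Lemmas~\ref{obs:centralautomorphisms2} and~\ref{obs:sigmaautomorphisms} collapse the central and type-$\Sigma$ factors to the identity on $\mbU_n(R)^{\mathrm{ab}}$. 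Hence $\barra{\psi'} = \barra{\tau^\veps \circ \alpha_\ast}$, and it suffices to prove $R(\barra{\tau^\veps \circ \alpha_\ast}) = \infty$.

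Next, Proposition~\ref{obs:flip} provides the invariant direct-product decomposition $\mbU_n(R)^{\mathrm{ab}} = E_{\mathrm{mid}} \times C_{\mathrm{mid}}$, so $\mbU_n(R)^{\mathrm{ab}}/C_{\mathrm{mid}} \cong E_{\mathrm{mid}}$. A second application of Lemma~\dref{lem:desempre}{PHEEvsPHEEbarra} reduces the problem to showing that the restriction $\barra{\tau^\veps \circ \alpha_\ast}|_{E_{\mathrm{mid}}}$ has infinite Reidemeister number.

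The final step is a case analysis on the parity of $n$, identifying this restriction via the explicit formula~\bref{tau1} for $\barra{\tau}$ together with the description of $\barra{\alpha_\ast}$ in Remark~\ref{obs:ringautomorphisms}. If $n$ is even, then $m = n/2 = n - m$, so $E_{\mathrm{mid}} \cong \Addi(R)$ and $\barra{\tau}$ fixes $\mc{E}_{m,m+1}(R)$ pointwise; the restriction is therefore $\addi{\alpha}$, which has infinite Reidemeister number by hypothesis. If $n$ is odd, then $m + 1 = n - m$ and $E_{\mathrm{mid}} \cong \Addi(R) \times \Addi(R)$; for $\veps = 0$ the restriction equals $\addi{\alpha} \times \addi{\alpha}$ (and a further projection onto one factor via Lemma~\dref{lem:desempre}{PHEEvsPHEEbarra} reduces to $\addi{\alpha}$), while for $\veps = 1$ the flip $\barra{\tau}$ swaps $\mc{E}_{m,m+1}(R)$ and $\mc{E}_{n-m,n-m+1}(R)$ on the first superdiagonal, so the restriction coincides precisely with $\tau_\alpha$. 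In every subcase, the hypothesis of Theorem~\ref{thm:Additive} yields $R(\barra{\tau^\veps \circ \alpha_\ast}|_{E_{\mathrm{mid}}}) = \infty$.

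The main technicality is matching $\barra{\tau^\veps \circ \alpha_\ast}|_{E_{\mathrm{mid}}}$, under the canonical identifications, with either $\addi{\alpha}$ or $\tau_\alpha$; once this is done, the chain $R(\psi') \geq R(\barra{\psi'}) \geq R(\barra{\tau^\veps \circ \alpha_\ast}|_{E_{\mathrm{mid}}}) = \infty$ set up above delivers the proposition.
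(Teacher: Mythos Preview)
Your proposal is correct and follows essentially the same route as the paper: pass to the abelianization to kill $\mc{z}$ and $\sigma$ via Lemmas~\ref{obs:centralautomorphisms2} and~\ref{obs:sigmaautomorphisms}, use the invariant decomposition from Proposition~\ref{obs:flip} to reduce to $E_{\mathrm{mid}}$, and then identify the induced map with $\addi{\alpha}$ or $\tau_\alpha$ by a parity case analysis. The only cosmetic difference is that the paper phrases the second reduction as an induced automorphism on the quotient $\mbU_n(R)^{\mathrm{ab}}/C_{\mathrm{mid}}$ rather than as a restriction to $E_{\mathrm{mid}}$, but since both factors are invariant in the direct product these coincide.
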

\begin{proof}
Write $ \psi' = \mc{z} \circ \sigma \circ \tau^\veps \circ \alpha_\ast \in \mc{A}$ with  $\veps \in \set{0,1}$, where $\mc{z} \in \mc{Z}$, $\sigma \in \mc{U}^{(c)}$, $\alpha_\ast \in \anel{\Aut}(R)$.  
 As shown in Lemmata~\ref{obs:centralautomorphisms2} and~\ref{obs:sigmaautomorphisms}, $\barra{\mc{z} \circ \sigma} = \id$, whence $\barra{\psi'} = \barra{\tau^\veps \circ \alpha_\ast}$.

Following Proposition~\ref{obs:flip}, consider the `middle' subgroup $E_{\mathrm{mid}} \leq \mbU_n(R)^{\mathrm{ab}}$, i.e., the image in $\mbU_n(R)^{\mathrm{ab}}$ of the product 
\[\mc{E}_{m, m + 1}(R) \cdot \mc{E}_{n-m, n - m + 1}(R) \leq \mbU_n(R),\] 
where $m = \left\lceil \frac{n-1}{2} \right\rceil$.  
Since the complement $C_{\mathrm{mid}} \leq \mbU_n(R)^{\mathrm{ab}}$ is also $\barra{\tau^\veps \circ \alpha_\ast}$-invariant, modding it out yields an induced automorphism $\barra{\barra{\tau^\veps \circ \alpha_\ast}}$ on $E_{\mathrm{mid}}$ and $R(\barra{\tau^\veps \circ \alpha_\ast}) \geq R\left( \barra{\barra{\tau^\veps \circ \alpha_\ast}} \right)$ by Lemma~\ref{lem:desempre}. 

Now if $n$ is even then $m=\left\lceil \frac{n-1}{2} \right\rceil = n - \left\lceil \frac{n-1}{2} \right\rceil=n-m$, in which case the middle subgroup $E_{\mathrm{mid}}$ consists of a single copy $\mc{E}_{m, m + 1}(R)$. That is, 
\[ E_{\mathrm{mid}} = \mc{E}_{m, m + 1}(R) \cong \Addi(R), \] 
and moreover $\barra{\barra{\tau^\veps}} = \id$. In case $n$ is odd, one has $n-m=n-\left\lceil \frac{n-1}{2} \right\rceil = n - \frac{n-1}{2} = \frac{n-1}{2}+1 = \left\lceil \frac{n-1}{2} \right\rceil + 1=m+1$. Thus, $E_{\mathrm{mid}}$ is the direct product
\[ E_{\mathrm{mid}} \cong \mc{E}_{m, m + 1}(R) \times \mc{E}_{m + 1, m + 2}(R) \cong \Addi(R) \times \Addi(R). \]
Using the identification above, if $\veps = 1$ it follows from the action~\bref{tau1} shown in Section~\ref{Levchuk:flip} that $\barra{\barra{\tau^\veps}} = \barra{\barra{\tau}}$ acts on $E_{\mathrm{mid}} \cong \Addi(R) \times \Addi(R)$ by the coordinate flip 
\[
\xymatrix@R=2mm{
\barra{\barra{\tau^\veps}} : \Addi(R) \times \Addi(R) \ar[r] & \Addi(R) \times \Addi(R) \\
(r,s) \ar@{|->}[r] & (s,r).
}
\]

Writing $\alpha \in \anel{\Aut}(R)$ for the ring automorphism which induces $\alpha_\ast$, it follows from the above discussion that the induced automorphism $\barra{\barra{\tau^\veps \circ \alpha_\ast}}$ acts on the middle subgroup $E_{\mathrm{mid}}$ as follows.
\[ \barra{\barra{\tau^\veps \circ \alpha_\ast}}(r) = \addi{\alpha}(r) \, \text{ if } E_{\mathrm{mid}} \cong \Addi(R) \, \text{ (case } n \text{ even), otherwise} \]
\[ \barra{\barra{\tau^\veps \circ \alpha_\ast}}(r,s) = \begin{cases} (\addi{\alpha}(r), \addi{\alpha}(s)) & \text{ if } \veps = 0, \\ (\addi{\alpha}(s), \addi{\alpha}(r)) & \text{ if } \veps=1, \end{cases} \text{ if } E_{\mathrm{mid}} \cong \Addi(R) \times \Addi(R).  \]
In the first two cases above one has $R(\barra{\barra{\tau^\veps \circ \alpha_\ast}}) \geq R(\addi{\alpha})$, whereas in the third case ($n$ odd and $\veps=1$) it even holds $\barra{\barra{\tau^\veps \circ \alpha_\ast}} = \tau_\alpha$, where $\addi{\alpha}$ and $\tau_\alpha$ are the automorphisms defined for the statement of Theorem~\ref{thm:Additive}. Altogether, the hypotheses of Theorem~\ref{thm:Additive} yield $R\left(\barra{\barra{\tau^\veps \circ \alpha_\ast}}\right) = \infty$, whence the equality $R(\barra{\psi'}) = \infty$, as desired.    
\end{proof}

\subsection{Concluding the proof of Theorem~\ref{thm:Additive}}\label{sec:provaThmB}

As explained, it suffices to show that $\PB_n(R)$ has property~$\Ri$. Let $\phee \in \Aut(\PB_n(R))$.

Suppose first that $R^\times=\set{1}$. Then $\PB_n(R) \cong \mbU_n(R)$ as the diagonal part of $\PB_n(R)$ is trivial. Moreover the subgroup $D \leq \Aut(\mbU_n(R))$ of automorphisms given by conjugation by a diagonal matrix is also trivial. It follows from Theorem~\ref{thm:Levchuk} that an arbitrary automorphism $\phee \in \Aut(\PB_n(R)) \cong \Aut(\mbU_n(R))$ is of the form $\phee = \iota \circ \psi$ where $\iota \in \Inn(\mbU_n(R))$ and $\psi \in \mc{A} \subseteq \Aut(\mbU_n(R))$ with $\mc{A}$ as in Section~\ref{sec:gpA}. Thus 
{Proposition~\ref{pps:autA}} and Lemma~\ref{lem:ignoreinner} give $R(\phee)=R(\psi) = \infty$. 

Assume now that the integral domain $R$ has at least two units. 
Apply Lemma~\ref{lem:useless} taking $G = \PB_n(R)$, $H = \mbU_n(R)$ and $A = \mc{A}$ as in Section~\ref{sec:gpA}. Again by Lemma~\ref{lem:ignoreinner} we have $R(\phee) = R(\iota \circ \psi) = R(\psi)$. It thus suffices to show that $R(\psi) = \infty$.

Recall that $\psi \in \Aut(\PB_n(R))$ induces an automorphism $\barra{\psi} \in \Aut\left(\frac{\mbD_n(R)}{Z_n(R)}\right)$ on the diagonal and $\psi' := \psi|_{\mbU_n(R)} \in \mc{A} \subseteq \Aut(\mbU_n(R))$. If $R(\barra{\psi}) = \infty$, Lemma~\ref{lem:desempre} already yields $R(\psi) = \infty$. Assume otherwise that $R(\barra{\psi}) < \infty$. Because $\Mult(R)$ and thus $\frac{\mbD_n(R)}{Z_n(R)}$ is finitely generated, we have that $\barra{\psi}$ has finitely many fixed points by Lemma~\ref{lem:ReidemeisterAbelian}. Since $R(\psi') = \infty$ by 
{Proposition~\ref{pps:autA}} it follows from Lemma~\ref{lem:desempre} that $R(\psi) = \infty$ as well, which finishes off the proof. \qed

\begin{rmk} \label{teoremadoTimur}
The reader familiar with results on property $\Ri$ for nilpotent groups might recall a similar theorem due to T.~Nasybullov, namely~\cite[Theorem~1]{TimurUniTri}. It is stated that, if $I$ is an infinite integral domain of characteristic zero which is {finitely generated} as a $\Z$-module, then the unipotent group $\mbU_n(I)$ has property~$\Ri$ whenever $n > 2|I^\times|$. On the other hand, for any integral domain $R$ with $|R^\times| = k < \infty$, the diagonal part $\mbD_n(R)/Z_n(R)$ is also finite, so that Lemma~\ref{lem:desempre}(iii) holds trivially for the exact sequence $\mbU_n(R) \into \PB_n(R) \onto \mbD_n(R)/Z_n(R)$. Thus Nasybullov's Theorem implies the following special cases of our Theorem~\ref{thm:Additive}: for an integral domain $I$ with $|I^\times|=k<\infty$ such that $\Addi(I)$ is free abelian, the groups $\PB_{2k+\ell}(I)$ and $\mbB_{2k+\ell}(I)$ have property~$R_\infty$ for all $\ell \geq 1$. 
For example, taking $I=\Z[i]$, it was already known by Nasybullov's result that $\PB_{m}(\Z[i])$ and $\mbB_m(\Z[i])$ have $R_\infty$ whenever $m \geq 9$, though groups such as $\PB_n(\Z[\sqrt{2}])$ are not covered by his findings as the base ring has infinitely many units.

We point out that there is a small gap in the proof of \cite[Theorem~1]{TimurUniTri}. 
Indeed, Nasybullov's theorem uses \cite[Proposition~7]{TimurUniTri}, which in turn is cited as a version of 
Levchuk's 
theorem for the case of integral domains and $n \geq 3$. However, as stated, \cite[Proposition~7]{TimurUniTri} does \emph{not} include 
extremal automorphisms 
--- that is, the subgroup $\mc{U}^{(c)} \leq \Aut(\mbU_n(R))$ was not considered in~\cite{TimurUniTri}. The proposition thus cannot be applied as stated since there exist integral domains $R$ for which the subgroup $\mc{U}^{(c)} \leq \Aut(\mbU_n(R))$ is non-trivial --- take e.g., $R = \Z[1/2]$. Moreover when $n \leq 4$ the generators of $\Aut(\mbU_n(R))$ also include automorphisms not considered in~\cite{TimurUniTri}; cf. \cite[Theorems~2 and~3 and Corollary~5]{LevchukOriginal}.

Fortunately, the above mentioned gap in~\cite[Theorem~1]{TimurUniTri} can be overcome. First of all the assumptions on $R$ include having characteristic zero and thus $n \geq 5$ since $R$ contains a copy of $\Z$. Secondly, the omitted 
extremal automorphisms 
act trivially on almost all quotients of the lower central series of $\mbU_n(R)$. More precisely, any $\sigma \in \mc{U}^{(c)}$ induces the identity on $\mbU_n(R) / \gamma_{n-2}(\mbU_n(R))$, as can be seen from the description of $\mc{U}^{(c)}$; cf. Lemma~\ref{obs:sigmaautomorphisms}. The remaining arguments in the proof of \cite[Theorem~1, pages 258--261]{TimurUniTri} thus carry over with appropriate modifications. We take the opportunity to thank T.~Nasybullov for promptly discussing his results with us. 
\end{rmk}

\section{Applications of Theorem~\ref{thm:Additive}}\label{applications}

Here we exhibit new families of soluble matrix groups having property~$\Ri$ by applying Theorem~\ref{thm:Additive}. We keep the notation of the previous section for the automorphisms {$\addi{\alpha}$ and $\tau_\alpha$.}

\begin{pps} \label{pps:FptFptt-1OK} 
Let $R_0 \in \set{\Z, \F_p \mid p \in \N \text{ a prime}}$ and suppose $R \in \set{R_0[t], \laurent{R_0}, \ri_\K}$, where $\ri_\K$ is the ring of integers of an {arbitrary} algebraic number field $\K$. Then $R(\addi{\alpha}) = \infty = R(\tau_\alpha)$ for any ring automorphism $\alpha \in \anel{\Aut}(R)$. In particular, the groups $\mbB_n(R)$ and $\PB_n(R)$ have property~$\Ri$ when $n \geq 4$.
\end{pps}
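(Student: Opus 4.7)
I would prove this by verifying the hypotheses of Theorem~\ref{thm:Additive}: each listed $R$ is an integral domain with finitely generated unit group (the unit groups of $R_0[t]$ and $R_0[t, t^{-1}]$ are, respectively, $R_0^\times$ and $R_0^\times \times \gera{t}$, and Dirichlet's unit theorem handles $\ri_\K$), so the core task is to show $R(\addi{\alpha}) = R(\tau_\alpha) = \infty$ for every $\alpha$. A first reduction disposes of $\tau_\alpha$: the relation defining twisted conjugacy yields $(r', s') = (u + r - \addi{\alpha}(v),\, v + s - \addi{\alpha}(u))$, so summing coordinates gives $r' + s' - (r + s) = (\id - \addi{\alpha})(u + v)$; hence the addition map $R \times R \to R/(\id - \addi{\alpha})(R)$ descends to a surjection on Reidemeister classes and $R(\tau_\alpha) \geq R(\addi{\alpha})$. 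It thus suffices to show $R/(\id - \addi{\alpha})(R)$ is infinite for every $\alpha$.

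For $R = \ri_\K$, $\Addi(R)$ is finitely generated, so Lemma~\ref{lem:ReidemeisterAbelian} reduces the claim to the existence of infinitely many $\addi{\alpha}$-fixed points; since $\alpha$ extends to a $\Q$-algebra automorphism of $\K$, it fixes $\Z \subseteq \ri_\K^\alpha$ pointwise. For $R \in \{\Z[t], \Z[t, t^{-1}]\}$ I would bootstrap from positive characteristic: the ideal $pR$ is $\alpha$-invariant (since $\alpha$ fixes $\Z$), so Lemma~\ref{lem:desempre}\bref{PHEEvsPHEEbarra} applied to $0 \to pR \to R \to R/pR \to 0$ gives $R(\addi{\alpha}) \geq R(\addi{\alpha_p})$ for the induced ring automorphism $\alpha_p$ of $R/pR \in \{\F_p[t], \F_p[t, t^{-1}]\}$.

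It remains to handle $R \in \{\F_p[t], \F_p[t, t^{-1}]\}$, where every ring automorphism has the form $\alpha(t) = ct + d$ or $\alpha(t) = ct^{\pm 1}$ with $c \in \F_p^\times, d \in \F_p$. If $|\alpha|$ is coprime to $p$ --- which is every case except $\alpha(t) = t + d$ with $d \neq 0$ on $\F_p[t]$ and $\alpha(t) = t^{-1}$ on $\F_2[t, t^{-1}]$ --- the averaging projector $N = |\alpha|^{-1}\sum_{i=0}^{|\alpha|-1} \alpha^i$ satisfies $N \circ (\id - \alpha) = 0$ and $N|_{R^\alpha} = \id_{R^\alpha}$, so $R^\alpha$ embeds into $R/(\id - \alpha)(R)$; the fixed subring is always infinite (it contains, e.g., $\F_p[(t - d/(1-c))^{|c|}]$ in the affine case $c \neq 1$ and $\F_p[t + ct^{-1}]$ in the Laurent involution case). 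For the unipotent case $\alpha(t) = t + d$ with $d \neq 0$, I would use that $u := t^p - d^{p-1} t$ is fixed by $\alpha$ and makes $\F_p[t]$ a free $\F_p[u]$-module of rank $p$ with basis $\{1, t, \ldots, t^{p-1}\}$; in this basis $\id - \alpha$ is $\F_p[u]$-linear and strictly upper triangular with superdiagonal entries $-id \in \F_p^\times$, producing a cokernel isomorphic to the rank-one free $\F_p[u]$-module generated by $t^{p-1}$, which is infinite. The last exception $\alpha(t) = t^{-1}$ on $\F_2[t, t^{-1}]$ is handled directly: $(\id - \alpha)(t^n) = t^n + t^{-n}$ identifies $t^n$ with $t^{-n}$ in the cokernel, which then has $\F_2$-basis $\{1, t, t^2, \ldots\}$. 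With $R(\addi{\alpha}) = \infty$ confirmed in every case, Theorem~\ref{thm:Additive} yields property~$\Ri$ for $\mbB_n(R)$ and $\PB_n(R)$ when $n \geq 4$; the principal obstacle I foresee is this modular Laurent exception, where neither the averaging nor the $\F_p[u]$-module arguments apply.
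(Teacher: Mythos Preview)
Your proof is correct and takes a genuinely different route from the paper. The two main departures are: (a) your reduction $R(\tau_\alpha)\ge R(\addi{\alpha})$ via the addition map $(r,s)\mapsto r+s$, which the paper does not use---there $R(\tau_\alpha)=\infty$ is re-proved from scratch in each case by exhibiting explicit infinite families of pairwise non-conjugate elements; and (b) for $\F_p[t]$ and $\F_p[t,t^{-1}]$ you argue structurally (the averaging projector $N=|\alpha|^{-1}\sum\alpha^i$ when $\gcd(|\alpha|,p)=1$, and the Artin--Schreier style $\F_p[u]$-module decomposition for the unipotent case), whereas the paper carries out explicit coefficient comparisons, isolating the $t^{p\theta+p-1}$ terms in Lemma~\ref{claimFpt} to produce the infinite family $\{t^{p(p-1)i+p-1}\}_{i\in\N}$ of non-conjugate elements. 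Your approach is shorter and more conceptual; the paper's is more elementary and gives concrete representatives of distinct Reidemeister classes. One further point in your favour: your enumeration $\alpha(t)=ct^{\pm1}$, $c\in\F_p^\times$, for $\anel{\Aut}(\F_p[t,t^{-1}])$ is the correct one, and your argument handles all of these, while the paper's Section~\ref{sec:RinftyFptt-1} treats only $\alpha(t)=t^{\pm1}$---so your averaging argument in fact closes a small gap. The $\ri_\K$ case and the reduction $\Z[t]\to\F_p[t]$ via $pR$ are handled identically in both.
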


We split the proof in Sections~\ref{sec:Fpt}, \ref{sec:RinftyFptt-1} and~\ref{sec:OK} below. 
This result is a major step towards Theorem~\ref{thm:Lieapplication}, covering the cases where $\RSpec(\Gamma_{n,p}) = \{\infty\}$ for $ n \geq 4$. 

\subsection{First case: polynomials in one variable \texorpdfstring{$R = R_0[t]$}{R = R0[t]}} \label{sec:Fpt} In this section we work with the polynomial rings $R = R_0[t]$ in one variable with coefficients in $R_0 \in \set{\Z, \F_p \mid p \text{ a prime}}$. 
It is a simple exercise to verify that every $\alpha \in \anel{\Aut}(R_0[t])$ is of the form $\alpha(\sum_{i=0}^d f_i t^i) = \sum_{i=0}^d f_i (at+b)^i$ for some $a \in R_0^\times$ and $b \in R_0$. 

We start with the case $R_0 = \F_p$. From the above description, the group $\anel{\Aut}(\F_p[t])$ is finite. We show that $R(\addi{\alpha}) = \infty$ drawing from ideas due to Jabara~\cite{Jabara} and Mitra--Sankaran~\cite{MitraSankaran}. 

Consider the element $s \in R$ given by 
\begin{equation}
s := \prod_{\sigma \in \anel{\Aut}(R)} \sigma(t), \label{eq:Oelemento}
\end{equation}
which can be defined since $R = \F_p[t]$ has finitely many ring automorphisms. As $t \in \F_p[t]$ is not a unit, $s$ is also a non-unit. In particular, we can define proper ideals 
\[ I_n := (s^n) \nsgp R \text{ for } n \in \N, \]
noting that each quotient $R/I_n$ is a finite-dimensional $\F_p$-vector space. 

By construction the element $s \in R$ above is a fixed point of any $\alpha \in \anel{\Aut}(R)$, whence each ideal $I_n$ is $\alpha$-invariant. Thus $\alpha \in \anel{\Aut}(R)$ induces, for every $n \in \N$, a ring automorphism 
\[ \alpha_n \in \anel{\Aut}(R/I_n) \quad \text{ given by } \quad \alpha_n(f+I_n) := \alpha(f) + I_n. \]
Viewing $\alpha$ and $\alpha_n$ as additive automorphisms of $R$ and $R/I_n$, respectively, it follows from Lemma~\ref{lem:desempre} that 
\[R(\addi{\alpha}) \geq R(\addi{(\alpha_n)}) \quad \text{ for every } n \in \N.\]
In what follows we argue that the sequence of natural numbers $(R(\addi{(\alpha_n)}))_{n \in \N}$ admits a strictly increasing subsequence, which therefore implies that $R(\addi{\alpha}) = \infty$

To do so we invoke \cite[Theorem~4.1]{KarelSam} which guarantees that, for an $\F_p$-vector space $V$ and an $\F_p$-linear isomorphism $\Phi : V \to V$, one has 
\[R(\addi{\Phi}) = p^{\dim_{\F_p}(\mathrm{coker}(\Phi - \id))}, \, \text{ where } \, \mathrm{coker}(\Phi - \id) := \frac{V}{\im(\Phi-\id)}\]
and $\addi{\Phi}$ is just the same map viewed as an automorphism of $(V,+)$. In our case we can take $V = R/I_n$ and $\Phi = \alpha_n$ noting that $\alpha_n$ is $\F_p$-linear. Since $R/I_n$ is finite-dimensional, we obtain 
\begin{align*}
\dim_{\F_p}(\ker(\alpha_n-\id)) & = \dim_{\F_p}(R/I_n) - \dim_{\F_p}(\im(\alpha_n-\id)) \\
& =\dim_{\F_p}(\mathrm{coker}(\alpha_n-\id)),
\end{align*}
whence 
\[R(\addi{(\alpha_n)}) = p^{\dim_{\F_p}(\ker(\alpha_n-\id))} = p^{\dim_{\F_p}(\mathrm{Fix}(\alpha_n))}.\] 
The above thus shows that $R(\addi{(\alpha_n)})$ grows with $n$ in case the number of fixed points of $\alpha_n$ increases with $n$. 

\begin{lem} \label{6.1newarg}
For every given $n \in \N$ there exists an $N \in \N$ with $N > n$ and such that $|\mathrm{Fix}(\alpha_n)| < |\mathrm{Fix}(\alpha_N)|$. 
\end{lem}

\begin{proof}
Given $n$, write $d = \dim_{\F_p}(R/I_n)$ and choose $N > p^d + 1 = |R/I_n| + 1$. Then, for every $i,j = 0$, $\ldots$, $p^d$, the elements $s^i + I_N \in R/I_N$ are non-trivial fixed points of $\alpha_N$ and $s^i + I_N \neq s^j + I_N$ if $i \neq j$. Thus 
$|\mathrm{Fix}(\alpha_N)| \geq p^d + 1 > |R/I_n| \geq |\mathrm{Fix}(\alpha_n)|$. 
\end{proof}

Lemma~\ref{6.1newarg} thus implies that
\[R(\addi{\alpha}) \geq \mathrm{sup}\{ R(\addi{(\alpha_n)}) \mid n\in \N\} = \infty,\]
as desired.

Showing that $R(\tau_\alpha) = \infty$ is similar to the previous case. Indeed, 
the direct product of ideals $I_n \times I_n \leq R \times R$ is $\tau_\alpha$-invariant and is the kernel of the natural projection $R \times R \onto R/I_n \times R/I_n$. One analogously defines
\[ \xymatrix@R=2mm{ 
\displaystyle \tau_{\alpha_n} : \frac{R}{I_n} \times \frac{R}{I_n} \ar[r] & \displaystyle \frac{R}{I_n} \times \frac{R}{I_n} \\
(f+I_n, \, g+I_n) \ar@{|->}[r] & (\alpha(g)+I_n, \, \alpha(f)+I_n)
}
 \]
and $R(\tau_\alpha) \geq R(\tau_{\alpha_n})$ holds for all $n\in\N$. Since the elements $(s^i+I_n, s^i+I_n)$ are non-trivial, pairwise distinct elements of $\mathrm{Fix}(\tau_{\alpha_n})$ for every $i < n$, one concludes as in the preceding paragraphs that $\sup_{n}R(\tau_{\alpha_n}) = \infty$. This proves the proposition for $R = R_0[t]$ with $R_0=\F_p$. 

The case $R_0 = \Z$ follows from the previous ones. Take, for instance, the canonical projection $\pi : \Z \onto \Z / 2\Z = \F_2$. Given $\alpha \in \anel{\Aut}(\Z[t])$, which acts via $\alpha(\sum_{i=0}^d f_i t^i) = \sum_{i=0}^d f_i (at+b)^i$, it is clear that the ideal $2\Z[t]$ of polynomials with even coefficients is $\addi{\alpha}$-invariant. Thus $\addi{\alpha}$ induces the automorphism $\barra{\addi{\alpha}}$ on $\Z[t] / 2\Z[t] \cong \F_2[t]$ given by $\addi{\alpha}(\sum_{i=0}^d \pi(f_i) t^i) = \sum_{i=0}^d \pi(f_i) (\pi(a)t + \pi(b))^i$. But $R(\addi{\alpha}) \geq R(\barra{\addi{\alpha}})$ by Lemma~\ref{lem:desempre}, and we have just shown that such an $\barra{\addi{\alpha}}$ on $\F_2[t]$ has infinite Reidemeister number. Similarly, the product of ideals $2\Z[t] \times 2\Z[t]$ is also $\tau_\alpha$-invariant, whence taking the induced automorphism $\barra{\tau}_\alpha$ on $\F_2[t] \times \F_2[t]$ yields $R(\tau_\alpha) \geq R(\barra{\tau}_\alpha) = \infty$ by the previous considerations. Applying Theorem~\ref{thm:Additive} finishes off the proof of the present cases. \qed

\subsection{The case of Laurent polynomials \texorpdfstring{$R = \laurent{R_0}$}{R = R0[t,1/t]}} \label{sec:RinftyFptt-1} 
Turning to the second case of Proposition~\ref{pps:FptFptt-1OK} we let $R$ denote the ring of Laurent polynomials $R = R_0[t,t^{-1}]$ throughout this section. Again, $R_0$ is either the ring of integers $\Z$ or a finite field $\F_p$ with a prime number $p$ of elements. We first remark that, in such cases, $R^\times = \{ ut^{n} \mid u \in R_0^{\times}, n \in \Z\}$. 

A straightforward verification shows that the group of ring automorphisms $\anel{\Aut}(\laurent{R_0})$ is isomorphic to $R_0^\times \times C_2$ for any choice of $R_0 \in \{\Z, \F_p \mid p \text{ a prime}\}$. 
To see why, first note that $\alpha\vert_{R_0} = \id\vert_{R_0}$ when $\alpha\in \anel{\Aut}(\laurent{R_0})$, so that $\alpha$ is completely determined by the assignment $t\mapsto \alpha(t)$. As $t$ is a unit in $\laurent{R_0}$, $\alpha(t)$ must equal some $ut^{n}$ with $u \in R_0^\times$ and $n\in \Z$. But if $n \neq \pm 1$, then $\alpha$ would not be surjective. (Recall that $t$ is a torsion-free unit.) 

Thus, given $\alpha \in \anel{\Aut}(\laurent{R_0})$ and an arbitrary Laurent polynomial $f(t) = \sum_{\ell=-\infty}^\infty f_\ell t^\ell \in \laurent{R_0}$, where only finitely many coefficients $f_\ell\in R_0$ are non-zero, we have $\alpha(f(t)) = \sum_{\ell=-\infty}^\infty f_\ell u^\ell t^{\pm \ell}$.

Now let $\mc{o} = \mathrm{ord}(R_0^\times)$.  
We argue that the elements $t^{\mc{o}\ell} \in \laurent{R_0}$ with $\ell \in \N$ yield infinitely many $\addi{\alpha}$-twisted conjugacy classes. Take $i > j \in \N$. Then $t^{\mc{o}i}$ and $t^{\mc{o}j}$ are $\addi{\alpha}$-conjugate if and only if there exists $h(t) = \sum_{\ell=-\infty}^\infty h_\ell t^\ell \in \laurent{R_0}$ such that 
\begin{equation} 
\label{6.2newarg}
0\neq t^{\mc{o}i} - t^{\mc{o}j} = (\id-\addi{\alpha})(h(t)) = \sum_{\ell=-\infty}^\infty h_\ell t^\ell - \sum_{\ell=-\infty}^\infty h_\ell u^\ell t^{\pm\ell}.
\end{equation}
We consider two cases. If $\alpha$ sends $t$ to $ut$ with $u \in R_0^\times$, Eq.~(\ref{6.2newarg}) implies in particular that the Laurent polynomial $h(t)$ must satisfy the condition 
\[ 1 = h_{\mc{o}i} - u^{\mc{o}i}h_{\mc{o}i} = h_{\mc{o}i}(1-(u^{\mc{o}})^{i}) = h_{\mc{o}i}(1-1)=0, \]
which is impossible and hence no such $h(t)$ exists. 
Assume otherwise that $\alpha$ maps $t$ to $ut^{-1}$ with $u \in R_0^\times$. 
Since $i$ is positive and distinct from $j$, in particular, we obtain from Eq.~(\ref{6.2newarg}) the system of equations  
\[ \begin{cases}
 1 = h_{\mc{o}i} - u^{-\mc{o}i}h_{-\mc{o}i} = h_{\mc{o}i} - (u^{\mc{o}})^{-i}h_{-\mc{o}i} = h_{\mc{o}i} - h_{-\mc{o}i} \\
 0= h_{-\mc{o}i} -u^{\mc{o}i}h_{\mc{o}i} = h_{-\mc{o}i} -(u^{\mc{o}})^{i} h_{\mc{o}i} = -h_{\mc{o}i} + h_{-\mc{o}i}
\end{cases}\]
for some coefficients of $h(t)$, which has no solutions over $R_0$. Thus such an element $h(t) \in \laurent{R_0}$ never exists, that is $t^{\mc{o}i}$ and $t^{\mc{o}j}$ can not be $\addi{\alpha}$-conjugate if $i \neq j$, whence $R(\addi{\alpha}) = \infty$. 

We now check that the additive automorphism $\tau_\alpha((r,s)) = (\addi{\alpha}(s),\addi{\alpha}(r))$ of $\laurent{R_0} \times \laurent{R_0}$ also has $R(\tau_\alpha) = \infty$. Similarly to the above we take $\mc{o} = \mathrm{ord}(R_0^\times)$ and show that the elements $(t^{\mc{o}i},0)$, $(0,-t^{\mc{o}j}) \in \laurent{R_0} \times \laurent{R_0}$ with $i > j \in \N$ define infinitely many distinct $\tau_\alpha$-twisted conjugacy classes. 

To begin with, $(t^{\mc{o}i},0)$ and $(0,-t^{\mc{o}j})$ are $\tau_\alpha$-conjugate if and only if there exists a pair of Laurent polynomials $(h(t), g(t)) = (\sum_{\ell=-\infty}^\infty h_\ell t^\ell, \sum_{m=-\infty}^\infty g_m t^m) \in R_0[t,t^{-1}] \times R_0[t,t^{-1}]$ satisfying 
\begin{align} \label{6.3newarg} 
\begin{split}
& (t^{\mc{o}i}, \, \, t^{\mc{o}j}) = (h(t), \, \, g(t)) - \tau_\alpha( \, (h(t), \, g(t)) \, ) \\
& = (\, h(t) - \addi{\alpha}(g(t)), \, \, g(t) - \addi{\alpha}(h(t)) \, ) \\
& = \left( \sum_{\ell=-\infty}^\infty h_\ell t^\ell - \sum_{m=-\infty}^\infty g_m u^m t^{\pm m} , \, \, \sum_{m=-\infty}^\infty g_m t^m - \sum_{\ell=-\infty}^\infty h_\ell u^\ell t^{\pm \ell} \right).
\end{split}
\end{align}
In particular, one extracts from Eq.~\eqref{6.3newarg} the following two possible systems of coefficients. Firstly, if $\alpha$ sends $t$ to $ut$ with $u \in R_0^\times$, we have the system 
\[ \begin{cases}
 1 = h_{\mc{o}i} - g_{\mc{o}i}u^{\mc{o}i} = h_{\mc{o}i}-g_{\mc{o}i}, \\
 0 = g_{\mc{o}i} - h_{\mc{o}i}u^{\mc{o}i} = -h_{\mc{o}i}+g_{\mc{o}i}. 
\end{cases}\]
Secondly, if $\alpha$ sends $t$ to $ut^{-1}$ with $u \in R_0^\times$, the system we obtain is 
\[ \begin{cases}
 1 = h_{\mc{o}i} - g_{-\mc{o}i} u^{-\mc{o}i} = h_{\mc{o}i} - g_{-\mc{o}i}, \\
 0 = g_{-\mc{o}i} - h_{\mc{o}i} u^{\mc{o}i} = -h_{\mc{o}i} + g_{-\mc{o}i}. 
\end{cases}\] 
In both cases, no solutions exist over $R_0$, so that there is no such pair $(h(t), g(t))$ satisfying the above. Therefore $(t^{\mc{o}i},0)$ and $(0,-t^{\mc{o}j})$ lie in distinct $\tau_\alpha$-twist conjugacy classes for all $i > j \in \N$. 

We now argue that, for $i>j \in \N$, the element $(t^{\mc{o}i},0)$ is never $\tau_{\alpha}$-twisted conjugate to $(t^{\mc{o}j},0)$. (By entirely analogous arguments, $(0,-t^{\mc{o}i})$ and $(0,-t^{\mc{o}j})$ also lie in distinct $\tau_{\alpha}$-twisted conjugacy classes whenever $i > j$.) This will conclude the proof that $R(\tau_\alpha)=\infty$. 

By definition, the elements $(t^{\mc{o}i},0)$ and $(t^{\mc{o}j},0)$ are $\tau_\alpha$-conjugate if and only if there exists a pair of Laurent polynomials $(h(t), g(t)) = (\sum_{\ell=-\infty}^\infty h_\ell t^\ell, \sum_{m=-\infty}^\infty g_m t^m) \in R_0[t,t^{-1}] \times R_0[t,t^{-1}]$ satisfying 
\[(t^{\mc{o}i}-t^{\mc{o}j},0)=\left( \sum_{\ell=-\infty}^\infty h_\ell t^\ell - \sum_{m=-\infty}^\infty g_m u^m t^{\pm m} , \, \, \sum_{m=-\infty}^\infty g_m t^m - \sum_{\ell=-\infty}^\infty h_\ell u^\ell t^{\pm \ell} \right).\]
Again, we split in cases $\alpha(t)=ut$ and $\alpha(t)=ut^{-1}$ with $u \in R_0^\times$. Firstly, if $\alpha(t)=ut$, we have the system 
\[ \begin{cases}
 1 = h_{\mc{o}i} - g_{\mc{o}i}u^{\mc{o}i} = h_{\mc{o}i}-g_{\mc{o}i}, \\
 0 = g_{\mc{o}i} - h_{\mc{o}i}u^{\mc{o}i} = -h_{\mc{o}i}+g_{\mc{o}i}. 
\end{cases}\]
Secondly, if $\alpha(t)=ut^{-1}$, the system we obtain is 
\[ \begin{cases}
 1 = h_{\mc{o}i} - g_{-\mc{o}i} u^{-\mc{o}i} = h_{\mc{o}i} - g_{-\mc{o}i}, \\
 0 = g_{-\mc{o}i} - h_{\mc{o}i} u^{\mc{o}i} = -h_{\mc{o}i} + g_{-\mc{o}i}. 
\end{cases}\] 
Either way, no solutions exist over $R_0$, hence there is no such pair $(h(t), g(t))$ satisfying the above. 

Thus $R(\alpha)=R(\tau_\alpha) = \infty$, so that $\Ri$ for $\mbB_n(\laurent{R_0})$ and $\PB_n(\laurent{R_0})$ with $n\geq 4$ is now a consequence of Theorem~\ref{thm:Additive}. \qed

\subsection{The case where \texorpdfstring{$R = \ri_\K$}{R = Ok}, a ring of integers} \label{sec:OK} 
Recall that the integral closure of $\Z$ in a finite extension of $\Q$ is a free $\Z$-module of finite rank, whence the underlying additive group $\Addi(\ri_\K)$ of any ring of integers $\ri_\K$ is finitely generated. 

Suppose $\alpha$ is any ring automorphism of $\ri_\K$. To prove the proposition it suffices to check that $\addi{\alpha}$ and $\tau_\alpha$ have infinitely many fixed points due to Lemma~\ref{lem:ReidemeisterAbelian} and the observation above. This is obviously true for $\addi{\alpha}$ since $\Z \subseteq \ri_\K$ is always contained in the set of fixed points of $\alpha$. And for the automorphism $\tau_\alpha : \ri_\K \times \ri_\K \to \ri_\K \times \ri_\K$, $\tau_\alpha(r,s) = (\addi{\alpha}(s), \, \addi{\alpha}(r))$, the diagonal of $\Z \times \Z$ is clearly contained in the set of fixed points of $\tau_\alpha$. Using Theorem~\ref{thm:Additive} we conclude the proof of Proposition~\ref{pps:FptFptt-1OK}. \qed

\section{Some groups in positive characteristic that do not have property \texorpdfstring{$\Ri$}{R infinity}} \label{sec:newexampleswithoutRinfty}

In contrast with Theorem~\ref{thm:Additive} we now give examples of metabelian $S$-arithmetic groups in positive characteristic not having property~$\Ri$. Our list includes finitely generated and non-finitely generated examples.


\begin{pps}\label{nori} Let $\F_q$ denote the finite field with $q$ elements. 
\begin{enumerate}
	\item \label{nori.i} 
	There exist automorphisms 
	\[\phee_{\mathbb{A}} \in \Aut(\PB_2(\F_q[t])), \,\, \phee_{\mbB} \in \Aut(\mbB_2(\F_q[t])), \,\, \Phi \in \Aut(\mbU_2(\F_q[t]))\]
	with Reidemeister numbers 
    \[ R(\phee_{\mathbb{A}}) = q-1, \quad R(\phee_{\mbB}) = (q-1)^2, \quad R(\Phi) = 1, \]
    respectively. 
	\item \label{nori.ii} 
	If $q \geq 4$, there exist automorphisms $\phee_{A} \in \Aut(\PBplus_2(\F_q[t,t^{-1}]))$, $\phee_{B} \in \Aut(B^+_2(\F_q[t,t^{-1}]))$ and $\phee' \in \Aut(\mbU_2(\F_q[t,t^{-1}]))$ 
	with 
    \[R(\phee_A) = 2, \quad R(\phee_B) = 4, \quad R(\phee') = 1,\] respectively. 
\end{enumerate}
In particular, 	\emph{none} of the groups above have property~$\Ri$.
\end{pps}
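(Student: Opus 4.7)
The plan is, for each of the three groups in each part, to build an automorphism that acts trivially on the torus (or on its projective quotient) and as a carefully chosen $\F_q$-linear additive automorphism on the abelian unipotent radical $\mbU_2(R) \cong (R,+)$. Lemma~\ref{lem:desempre} then reduces the Reidemeister number to a product of the (finite) torus contribution and a per-column count, which I engineer to be exactly~$1$. Throughout, one identifies $\mbU_2(R) \cong (R,+)$ via the off-diagonal entry, $\mbB_2(R) \cong (R,+) \rtimes (R^\times)^2$ with diagonal $(u_1,u_2)$ acting through the character $\chi(u_1,u_2) = u_1 u_2^{-1}$, and $\PB_2(R) \cong \Aff(R) = (R,+) \rtimes R^\times$. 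In part~(i), $R^\times = \F_q^\times$, so the tori have the claimed orders $(q-1)^2$ and $q-1$.

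The heart of part~(i) is constructing an $\F_q$-linear additive automorphism $\Phi_P$ of $\F_q[t]$ such that $\id - u\Phi_P$ is surjective for every $u \in \F_q^\times$. With $d = \deg P$, I would work in the $\F_q$-basis $\{P^i t^j : i \geq 0,\, 0 \leq j < d\}$ (well-defined by iterated Euclidean division by $P$) and take $\Phi_P = \id + D$, where $D$ sends each basis element to its immediate predecessor in a fixed linear ordering (and the first basis element to~$0$). A direct analysis of the coefficient recursion $(1-u)a_\ell - u\,a_{\ell+1} = c_\ell$ shows that $\id - u\Phi_P$ is bijective for every $u \in \F_q^\times$: setting $a_\ell = 0$ for $\ell$ large and running the recursion backwards gives a unique, compactly supported preimage both for $u = 1$ (where $a_{\ell+1} = -c_\ell$) and for $u \neq 1$ (where $a_\ell = (u\,a_{\ell+1} + c_\ell)/(1-u)$). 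In particular $R(\Phi_P) = 1$. I then define $\phee_{\mbB,P}$ to act as $\Phi_P$ on the additive factor and as the identity on $\mbD_2$; $\F_q$-linearity of $\Phi_P$ makes this a group automorphism. A direct semidirect-product calculation shows that, for each fixed $d$, the $\phee_{\mbB,P}$-twisted class of $(r,d)$ equals the full column $R \times \{d\}$ because $\id - \chi(d)\Phi_P$ is surjective; this yields $R(\phee_{\mbB,P}) = |\mbD_2| = (q-1)^2$, and the identical argument on $\PB_2$ gives $R(\phee_{\mathbb{A},P}) = q - 1$.

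For part~(ii), $R = \F_q[t,t^{-1}]$ has infinite unit group $R^\times = \F_q^\times \times \langle t \rangle$, so the finiteness of the Reidemeister numbers $4$ and $2$ forces one to restrict to the subgroup $B_2^+$, whose diagonal $\mbD_2^+$ is finite (of the respective orders dictated by the paper's conventions). The shift-and-add recipe fails on the bi-infinite basis $\{t^n\}_{n \in \Z}$, so for $\phee'$ on $\mbU_2(\F_q[t,t^{-1}]) \cong (\F_q[t,t^{-1}],+)$ I would take scalar multiplication by some $\lambda \in \F_q^\times \setminus \{1\}$: this is a bijection and $\phee' - \id = (\lambda-1)\id$ is again a bijection, giving $R(\phee') = 1$. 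For $\phee_A$ and $\phee_B$ one must moreover select $\lambda$ so that $\id - u\phee' = (1 - u\lambda)\id$ stays surjective for every image $u \in \F_q^\times$ of the finite diagonal $\mbD_2^+$; the hypothesis $q \geq 4$ is precisely what provides enough scalars to make such a choice. Extending by the identity on $\mbD_2^+$ (respectively on $\mbD_2^+/Z$) and applying the column count of part~(i) then yields $R(\phee_B) = 4$ and $R(\phee_A) = 2$.

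The main obstacle is the uniform surjectivity of $\id - u\Phi_P$ over \emph{all} relevant units $u$ simultaneously: in part~(i) this is handled by exploiting that the index set $\Z_{\geq 0}$ is bounded below so the coefficient recursion terminates, but in part~(ii) the bi-infinite index set $\Z$ invalidates the shift-style construction and forces a genuinely different choice of $\lambda$ that avoids the finitely many obstructing units coming from $\mbD_2^+$.
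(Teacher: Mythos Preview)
Your approach to part~(i) works but differs from the paper's. You take $\Phi_P = \id + D$ with $D$ a one-sided shift, whereas the paper applies the companion matrix $C_P$ block-diagonally on successive $d$-dimensional pieces of $\F_q[t]$. The paper's route genuinely uses the irreducibility of $P$: since $C_P$ has no eigenvalue in $\F_q$, each block map $\id - uC_P$ is invertible, and blockwise invertibility yields the surjectivity of $\id - u\Phi_P$ for every $u \in \F_q^\times$. Your construction, by contrast, does not need irreducibility or $\deg P \geq 2$ at all---the basis $\{P^i t^j\}$ could just as well be $\{t^\ell\}$. (A minor slip: for $u=1$ your preimage is not unique since $a_0$ is free, but only surjectivity matters, so this is harmless.)

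Part~(ii) has a genuine gap. You assert that the diagonal of $B_2^+(\F_q[t,t^{-1}])$ is finite, but by the paper's definition $B_2^+$ retains the \emph{torsion-free} part $R^\times_{\mathrm{tf}} = \langle t \rangle \cong \Z$ of the units on the diagonal, so $\mbD_2^+ \cong \Z^2$ (and its projective quotient is $\Z$). Extending by the identity on this infinite abelian quotient forces $R(\phee_B) \geq R(\id_{\Z^2}) = \infty$ by Lemma~\ref{lem:desempre}\bref{PHEEvsPHEEbarra}, so your candidate cannot have Reidemeister number~$4$. Relatedly, conjugation by a diagonal element $(t^k, t^j)$ multiplies the unipotent entry by $t^{k-j}$, not by a scalar in $\F_q^\times$, so the condition ``$1 - u\lambda \neq 0$ for the relevant units~$u$'' is not the one in play. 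The paper instead lets $\phee_B$ \emph{invert} the diagonal, $t^k \mapsto t^{-k}$, which on the quotient already gives $R(\bar\phee_B) = |\Z^2/2\Z^2| = 4$, and on the unipotent part uses $f(t) \mapsto a\,f(t^{-1})$, coupling the ring involution $t \mapsto t^{-1}$ with a scalar $a$ chosen so that $1 - a^2 \in \F_q^\times$; the latter is where $q \geq 4$ is actually used.
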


The groups $\PBplus_n(R)$ and $B_n^+(R)$ are just variants of $\PB_n(R)$ and $\mbB_n(R)$, respectively, without torsion on the diagonal part; cf. Section~\ref{fqttmo} below for a definition. 

Proposition~\ref{nori} has some overlap with existing results. For $q = p$ a prime, the group $\PBplus_2(\F_p[t,t^{-1}])$ is isomorphic to the lamplighter group $\mc{L}_p = C_p \wr \Z$. 
Using different techniques, Gon\c{c}alves and Wong~\cite{DacibergWongWreath} completely classified which wreath products of the form $A \wr \Z$ have property~$\Ri$, where $A$ is a finitely generated abelian group. In particular, for the small fields $\F_2$, $\F_3$ excluded from Proposition~\dref{nori}{nori.ii}, their results imply that $\PBplus_2(\laurent{\F_2})$ and $\PBplus_2(\laurent{\F_3})$ actually {do have} property~$\Ri$, while $\PBplus_2(\laurent{\F_q})$ does not when $q\geq 4$. 

Combining Propositions~\ref{pps:FptFptt-1OK} and~\ref{nori} --- along with the results of Gon\c{c}alves--Wong on $\PBplus_2(\laurent{\F_2})$ and $\PBplus_2(\laurent{\F_3})$ --- yields Table~\ref{tabela}. 
\begin{table*}[h] 
	\centering
		\begin{tabular}{ c | c | c | c }
				Does $G_n(R)$ have $\Ri$?	& $n=2$ 	& $n=3$ & $n \geq 4$ \\ \hline
				$G_n\in\{\mbB_n, \PB_n\}$ & No 	& Unknown & Yes \\ $R=\F_q[t]$ & & \\ \hline
				$G_n\in \{B_n^+, \PBplus_n\}$ & No 	& Unknown & Unknown \\ $R=\F_q[t]$ & & \\ \hline
				$G_n\in\{\mbB_n, \PB_n\}$ & Unknown  	& Unknown & Yes \\ $R=\F_q[t,t^{-1}]$ &  & \\ \hline
				$G_n\in \{B_n^+, \PBplus_n\}$ & No $\iff q \geq 4$ & Unknown & Unknown \\ $R=\F_q[t,t^{-1}]$ & Yes: $q=2,3$  & \\  \hline
		\end{tabular}
		\caption{Some soluble groups with and without $R_\infty$.} \label{tabela}
\end{table*} 
The $3\times 3$ case, unclear to us, might be an interesting test case for property~$\Ri$. The `plus' versions $B_n^+(\F_q[t])$ and $\PBplus_n(\F_q[t])$ are isomorphic to $\mbU_n(\F_q[t])$ by definition. 
Whether $B_n^+(\laurent{\F_q})$ and $\PBplus_n(\laurent{\F_q})$ have~$\Ri$ for large~$n$, and whether $\mbB_2(\laurent{\F_q})$ and $\PB_2(\laurent{\F_q})$ do not have~$\Ri$, is at the moment unknown to us. Note that our companion result \cite[Theorem~5.1(ii)]{Bn1}, which only deals with finitely presented groups, does not cover these particular $S$-arithmetic rings in positive characteristic.

\begin{rmk}
We stress that there cannot be a version of Theorem~\ref{thm:Additive} in the `plus' case removing torsion from the diagonal --- at least not for all $n \geq 4$. That is, the hypotheses of Theorem~\ref{thm:Additive} might hold while $B_n^+(R)$ and $\PBplus_n(R)$ do not necessarily have~$\Ri$. For example, take $R = \ri_\K$ a ring of integers of a number field $\K$ such that $\ri_\K$ has finitely many units (i.e., $\K$ is either $\Q$ or an imaginary quadratic field). As in Section~\ref{sec:OK}, the maps $\alpha_{\mathrm{add}}$ and $\tau_\alpha$ always have infinite Reidemeister number over $\ri_\K$. On the other hand, $B_n^+(\ri_\K) \cong \PBplus_n(\ri_\K) \cong \mbU_n(\ri_\K)$ and, by \cite[Proposition~8]{TimurUniTri}, it is known that $\RSpec(\mbU_4(\ri_\K)) \neq \{\infty\}$.
\end{rmk}

\subsection{Proof of Proposition~\dref{nori}{nori.i}}\label{fqt}
Throughout this section, $R$ denotes $\F_q[t]$, the polynomial ring in one variable over $\F_q$. 

First note that, because $\F_q$ is not algebraically closed, there exist $d \geq 2$ and a monic polynomial $P(X) = a_0 + a_1 X +  \ldots + a_{d-1} X^{d-1} + X^d$ 
with coefficients in $\F_q$ and \emph{irreducible} over $\F_q$. 
Certainly $a_0 \neq 0$ for otherwise $P(X)$ would be divisible by $X$. The companion matrix 
\[ C_P = \begin{pmatrix} 0 & 0 & \ldots & 0 & -a_0 \\ 1 & 0 & \ldots & 0 & -a_1 \\ 0 & 1 & \ldots & 0 & -a_2 \\ \vdots & \vdots & \ddots & \vdots & \vdots \\ 0 & 0 & \ldots & 1 & -a_{d-1} \end{pmatrix} \]
of $P(X)$ thus has determinant $\det(C_P) = a_0 \neq 0$ and therefore defines an element of $\GL_{d}(\F_q)$. Moreover, the characteristic polynomial of $C_P$ equals $P(X)$ itself. Since $P(X)$ is irreducible, one has {$\det(C_P - \lambda\cdot\mb{1}_d) \neq 0$} for all $\lambda \in \F_q \backslash \set{0}$, hence
\begin{align} \label{eq:Lula}
{\det(\id - a \cdot C_P) \neq 0} 
\text{ for any scalar } a \in \F_q \backslash \set{0}.
\end{align}
That is, the linear map $\id - a \cdot C_P$ still lies in $\GL_d(\F_q)$ if $a \neq 0$.

We would like to define an additive automorphism $\Phi \in \Aut(\Addi(\F_q[t]))$ induced by $C_P$. To do this we first write $\F_q[t]$ as an infinite dimensional $\F_q$-vector space on the standard basis $\set{1, t, t^2, \ldots}$, that is 
\[ \F_q[t] = \bigoplus_{\ell=0}^\infty \F_q \cdot t^\ell. \]
We remark that the elements $x \in \F_q \subset \F_q[t]$ can be viewed both as vectors $xt^0+0t^1+0t^2+\ldots \in \oplus_{\ell=0}^\infty \F_q t^\ell$ and as scalars of the ground field $\F_q$. In particular, an $\F_q$-linear transformation $T : \F_q[t] \to \F_q[t]$ {need not} fix the {vector} $xt^0+0t^1+0t^2+\ldots$ corresponding to the element $x \in \F_q$, though it still satisfies $T(x \cdot v) = x \cdot T(v)$ for any vector $v \in \F_q[t]$ and scalar $x \in \F_q$.

To construct our automorphism $\Phi$ we decompose $\F_q[t]$ into finite-dimensional blocks. More precisely, an arbitrary vector $v \in \F_q[t]$ shall be written as a sum $v = v_1+v_2+v_3+\ldots$ where each $v_k$ lies in the 
$k$-th 
$d$-dimensional block of the space $\oplus_{\ell=0}^\infty \F_q t^\ell$. That is, we decompose 
\[ \F_q[t] = \bigoplus_{\ell=0}^\infty \F_q \cdot t^\ell = \bigoplus_{k=1}^\infty \bigoplus_{\ell=(k-1)d}^{kd-1} \F_q \cdot t^\ell \]
and write $v \in \F_q[t]$ as
\[ v = \sum_{k=0}^\infty v_k \, \text{ where } \, v_k \in \bigoplus_{\ell=(k-1)d}^{kd-1} \F_q \cdot t^\ell \cong \F_q^d. \]
Since $C_P \in \GL_d(\F_q)$, we may define on each $d$-dimensional block  
$\oplus_{\ell=(k-1)d}^{kd-1} \F_q t^\ell \cong \F_q^d$ 
above the (invertible) linear transformation 
 \[ \xymatrix@R=2mm{ 
\displaystyle C_{P,k} : \bigoplus_{\ell=(k-1)d}^{kd-1} \F_q \cdot t^\ell \ar[r] & \displaystyle\bigoplus_{\ell=(k-1)d}^{kd-1} \F_q \cdot t^\ell \\
x \ar@{|->}[r] & C_P(x),
}
 \]
which is just a copy of $C_P$. Thus the map $\Phi : \F_q[t] \to \F_q[t]$ defined via 
\[ 
\xymatrix@R=2mm{ 
\Phi : \displaystyle\bigoplus_{k=1}^\infty \bigoplus_{\ell=(k-1)d}^{kd-1} \F_q \cdot t^\ell \ar[r] & \displaystyle \bigoplus_{k=1}^\infty \bigoplus_{\ell=(k-1)d}^{kd-1} \F_q \cdot t^\ell \\
v = \displaystyle\sum_{k=1}^\infty v_k \ar@{|->}[r] & \displaystyle\sum_{k=1}^\infty C_{P,k}(v_k)
} 
\]
is an automorphism of $\F_q[t]$ as an infinite-dimensional $\F_q$-vector space because each $C_{P,k}$ is an invertible linear transformation of the $d$-dimensional subspace 
$\bigoplus_{\ell=(k-1)d}^{kd-1} \F_q t^\ell$. 
In particular, $\Phi \in \Aut(\Addi(\F_q[t]))$.

Given a unit $a \in \F_q \backslash \set{0}$, let $\mc{m}_a : \Addi(\F_q[t]) \to \Addi(\F_q[t])$ denote the additive automorphism which is just multiplication by $a$. We claim that 
\begin{equation} \label{eq:R(maPhi)}
R(\mc{m}_a \circ \Phi) = 1 \text{ for all } a \in \F_q \backslash \set{0},
\end{equation} 
where $\mc{m}_a$ and $\Phi$ are here viewed as automorphisms of $\Addi(\F_q[t])$. Let then 
$r = \sum_{k=1}^\infty r_k \in \F_q[t]$ 
be arbitrary, where the $r_k$ are as above. Since $\id-a\cdot C_{P,k} \in \GL_d(\F_q)$ by the definition of $C_{P,k}$ and by equality~\bref{eq:Lula}, we may define 
\[ s_k := (\id-a\cdot C_{P,k})^{-1}(r_k) \in \bigoplus_{\ell=(k-1)d}^{kd-1} \F_q \cdot t^\ell \] 
for all $k$ and set 
$s = \sum_{k=1}^\infty s_k \in \F_q[t]$. 
One then has 
\begin{align*} 
r = \sum_{k=1}^\infty r_k = \sum_{k=1}^\infty (\id-a\cdot C_{P,k})(s_k) & = (\id-a\cdot\Phi)\left(\sum_{k=1}^\infty s_k\right) \\ & = s - \mc{m}_a \circ \Phi(s).
\end{align*} 
Thus $r$ is $(\mc{m}_a\circ\Phi)$-conjugated to the zero vector $0 \in \F_q[t]$, whence the claim.

Equality~\bref{eq:R(maPhi)} alone implies that $\mbU_2(\F_q[t]) = \left( \begin{smallmatrix} 1 & \F_q[t] \\ 0 & 1 \end{smallmatrix} \right) \cong \Addi(\F_q[t])$ does not have property~$\Ri$. To verify that the same is true for the groups $\PB_2(\F_q[t])$ and $\mbB_2(\F_q[t])$, consider the maps 
\[
\xymatrix@R=2mm{ \phee_{\mathbb{A}} : \PB_2(\F_q[t]) \ar[r] & \PB_2(\F_q[t]) \\ 
{\begin{bmatrix} u & r \\ 0 & v \end{bmatrix}} \ar@{|->}[r] & {\begin{bmatrix} u & \Phi(r) \\ 0 & v \end{bmatrix}} } 
\]
and 
\[ 
\xymatrix@R=2mm{ \phee_{\mathbf{B}} : \mbB_2(\F_q[t]) \ar[r] & \mbB_2(\F_q[t]) \\ 
{\begin{pmatrix} u & r \\ 0 & v \end{pmatrix}} \ar@{|->}[r] & {\begin{pmatrix} u & \Phi(r) \\ 0 & v \end{pmatrix}}.} 
\]
These are in fact automorphisms because $\Phi : \F_q[t] \to \F_q[t]$ is $\F_q$-linear. By definition, $\phee_{\mathbb{A}}$ and $\phee_{\mathbf{B}}$ preserve the diagonal and unipotent parts of the given groups. 
Recalling that $\mbU_2(\F_q[t])$ is characteristic in the groups above and because their diagonal parts are finite, 
we know from Lemma~\ref{lem:desempre} that $R(\phee_{\mathbb{A}})$ and $R(\phee_{\mathbf{B}})$ are finite in case $R(\iota_{\mathbb{A}} \circ \phee_{\mathbb{A}}')$ and $R(\iota_{\mathbf{B}} \circ \phee_{\mathbf{B}}')$ are also finite for all $\iota_{\mathbb{A}} \in \Inn(\PB_2(\F_q[t]))$ and $\iota_{\mbB} \in \Inn(\mbB_2(\F_q[t]))$. Here, $\phee_{\mathbb{A}}'$ and $\phee_{\mbB}'$ denote the restrictions to $\mbU_2(\F_q[t]) \cong \Addi(\F_q[t])$ of $\phee_{\mathbb{A}}$ and $\phee_{\mbB}$, respectively. But direct matrix computations show that $\iota_{\mathbb{A}} \circ \phee_{\mathbb{A}}'$ and $\iota_{\mathbf{B}} \circ \phee_{\mathbf{B}}'$ are of the form $\mc{m}_a \circ \Phi$ and $\mc{m}_b \circ \Phi$, respectively, for some units $a,b \in \F_q \backslash \set{0}$. Since $R(\mc{m}_a \circ \Phi) = R(\mc{m}_b \circ \Phi) = 1$ by equality~\bref{eq:R(maPhi)}, it follows that $\phee_{\mathbb{A}}$ and $\phee_{\mbB}$ have finite Reidemeister number as well. Therefore $\PB_2(\F_q[t])$ and $\mbB_2(\F_q[t])$ do not have property $\Ri$.

Lastly, the Reidemeister numbers of the maps above can be promptly computed. Firstly, $R(\Phi)=R(\mc{m}_1 \circ \Phi) = 1$. Now, since $\phee_{\mathbb{A}}$ and $\phee_{\mbB}$ fix the diagonal pointwise, every diagonal matrix lies in a distinct Reidemeister class, so that $R(\phee_{\mathbb{A}}) \geq q-1$ and $R(\phee_{\mbB}) \geq (q-1)^2$. Since every $r \in \F_q[t]$ can be written as $r = s - u \cdot \Phi(s)$ for any unit $u \in \F_q \backslash \{0\}$ and for some $s \in \F_q[t]$, it follows that an arbitrary element $\left( \begin{smallmatrix} u & r \\ 0 & v\end{smallmatrix} \right) \in \mbB_2(\F_q[t])$ is $\phee_{\mbB}$-conjugate to its diagonal part $\left( \begin{smallmatrix} u & 0 \\ 0 & v\end{smallmatrix} \right)$ because 
\[ \begin{pmatrix} u & r \\ 0 & v \end{pmatrix} =\begin{pmatrix} u & s - u \cdot \Phi(s) \\ 0 & v \end{pmatrix} = \begin{pmatrix} 1 & s \\ 0 & v \end{pmatrix} \cdot \begin{pmatrix} u & 0 \\ 0 & v \end{pmatrix} \cdot \begin{pmatrix} 1 & \Phi(s) \\ 0 & v \end{pmatrix}^{-1}.\] 
This means that 
\[\left[\begin{pmatrix}	u & 0 \\ 0 & v \end{pmatrix}\right]_{\varphi_{\mathbf{B}}}= \left\{\begin{pmatrix} u & r \\ 0 & v \\ \end{pmatrix}\mid r \in \F_q[t] \right\}.
\]
(Analogously for $ \PB_2(\F_q[t])$.)  Thus $R(\phee_{\mathbb{A}}) = q-1$ and $R(\phee_{\mbB}) = (q-1)^2$, as claimed. \qed

\subsection{Proof of Proposition~\dref{nori}{nori.ii}} \label{fqttmo}
Throughout this section, $R$ denotes the Laurent polynomial ring $\F_q[t,t^{-1}]$ with $q\geq 4$ a power of a prime. 

The group of units of $R$ is 
\[R^{\times}=\{u t^k \mid u \in \F_q^\times, \, k \in \Z\},\] 
which has torsion subgroup $t(R^\times)=\F_{q}^{\times}$. The group $R^\times_{\mathrm{tf}}=\{t^k\mid k \in \Z\}$ is a torsion-free complement of $t(R^\times)$, and we define 
\begin{alignat*}{3} B_{n}^{+}(R)&:= \mbB_n(R ; R^\times_{\mathrm{tf}}) &&:= \left\{ b=(b_{i,j}) \in \mbB_n(R) \bigm| b_{i,i} \in R^\times_{\mathrm{tf}},\, b_{i,j} \in R \right\},\\	
\PBplus_n(R) &:= \PB_n(R ; R^\times_{\mathrm{tf}}) &&:= \left\{ b=[b_{i,j}] \in \PB_n(R) \bigm| b_{i,i} \in R^\times_{\mathrm{tf}},\, b_{i,j} \in R \right\}.
\end{alignat*}
In other words, $B_n^+(R)\leq \mbB_n(R)$ and $\PBplus_n(R) \leq \PB_n(R)$ are the subgroups whose non-trivial diagonal entries are torsion-free units. (These subgroups are not unique setwise since they depend on the choice of $R^\times_{\mathrm{tf}}$, but they are unique up to isomorphism.)

\begin{lem}\label{lemmaa} Let $\F_q$ denote a finite field with $q \geq 4$ elements. There exists a non-zero $a \in \F_q$ such that $1-a^2$ is a unit of $\F_q$. In particular, $1-a$ is a unit of $\F_q$, and the system
\begin{align*} X-aY&=\ell\\
			  -aX+Y&=m
\end{align*}
has a solution $(X,Y) \in \F_{q}^{2}$ for all choices of $\ell, m \in \F_q$.
\end{lem}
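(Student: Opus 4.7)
The plan is to split according to the characteristic and then exhibit an explicit element $a$. First I would observe that the factorization $1-a^2=(1-a)(1+a)$ reduces the task to finding a nonzero $a\in\F_q$ with $a\neq 1$ and $a\neq -1$, and that any such $a$ automatically makes $1-a$ a unit (since the nonzero elements of a field are precisely its units).

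The main step is then a counting argument on the excluded set $\{0,1,-1\}\subseteq \F_q$. If $\carac(\F_q)=2$ then $1=-1$, so this set has exactly two elements, and since $q\geq 4$ the set $\F_q\setminus\{0,1\}$ is nonempty; pick any $a$ there. If $\carac(\F_q)\neq 2$ then $1\neq -1$, and the condition $q\geq 4$ combined with $q$ being a prime power forces $q\geq 5$, so $\F_q\setminus\{0,1,-1\}$ has at least $q-3\geq 2$ elements; again pick any such $a$. In both cases $1-a$ and $1+a$ are nonzero, hence units, so $1-a^2=(1-a)(1+a)\in\F_q^\times$.

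For the final assertion about the linear system, the plan is simply to note that its coefficient matrix is
\[ M=\begin{pmatrix} 1 & -a \\ -a & 1 \end{pmatrix}, \]
with $\det(M)=1-a^2\in\F_q^\times$ by the previous step. Hence $M\in\GL_2(\F_q)$ and the system $M\cdot (X,Y)^{T}=(\ell,m)^{T}$ admits a (unique) solution $(X,Y)\in\F_q^2$ for every choice of $\ell,m\in\F_q$. I do not expect any genuine obstacle here: the only mild subtlety is remembering to handle characteristic two separately so that the exclusion set $\{0,\pm 1\}$ is counted correctly, which is why the bound $q\geq 4$ (rather than $q\geq 3$) is exactly what is needed.
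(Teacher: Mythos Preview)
Your argument is correct. The paper's proof takes a slightly different (and shorter) route: it simply lets $a$ be a generator of the cyclic group $\F_q^\times$, and observes that since $|\F_q^\times|=q-1\geq 3$, the element $a$ has order at least~$3$, so $a^2\neq 1$ and hence $1-a^2\in\F_q^\times$. Your approach instead counts the excluded set $\{0,\pm 1\}$ and uses $q\geq 4$ (together with the observation that $q\geq 4$ in odd characteristic forces $q\geq 5$) to find an element outside it. Both arguments are elementary; the paper's avoids the characteristic case split by exploiting the cyclic structure of $\F_q^\times$, while yours has the minor advantage of showing that in fact \emph{any} $a\notin\{0,\pm1\}$ works, not just a generator. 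The treatment of the linear system is the same in spirit in both.
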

\begin{proof} Take $a$ to be a generator of $\F_{q}^{\times}$. Since $|\F_q|\geq 4$, it follows that $a^2\neq 1$.  
\end{proof}

Let $a\in \F_{q}^{\times}$ be as in Lemma~\ref{lemmaa}. Let also $k,j \in \Z$ and $f(t) \in \F_q[t,t^{-1}]$. Define 
\[ 
\xymatrix@R=2mm{ \phee_{B} : B_2^+(\F_q[t,t^{-1}]) \ar[r] & B_2^+(\F_q[t,t^{-1}]) \\ 
{\begin{pmatrix} t^k & f(t) \\ 0 & t^j \end{pmatrix}} \ar@{|->}[r] & {\begin{pmatrix} t^{-k} & af(t^{-1}) \\ 0 & t^{-j} \end{pmatrix}}} 
\]
and 
\[ 
\xymatrix@R=2mm{ \phee_{A} : \PBplus_2(\F_q[t,t^{-1}]) \ar[r] & \PBplus_2(\F_q[t,t^{-1}]) \\ 
{\begin{bmatrix} t^k & f(t) \\ 0 & t^j \end{bmatrix}} \ar@{|->}[r] & {\begin{bmatrix} t^{-k} & af(t^{-1}) \\ 0 & t^{-j} \end{bmatrix}}.} 
\]
It is a simple exercise to check that these maps are automorphisms. As $\mbU_2(R)$ is characteristic in $B_2^+(R)$ and $\PBplus_2(R)$, we also consider the automorphism  $\phee'= \phee_{B}|_{\mbU_2(R)}=\phee_{A}|_{\mbU_2(R)}$ on $\mbU_2(R)$.

\begin{pps}\label{pps:norifqttmo} The automorphisms defined above satisfy 
\begin{align*}\mathcal{R}(\phee_{B})&=\left\{\left[\left(\begin{smallmatrix} 1&0\\0&1 \end{smallmatrix}\right)\right]_{\phee_{B}},\left[\left(\begin{smallmatrix} t&0\\0&1 \end{smallmatrix}\right)\right]_{\phee_{B}},\left[\left(\begin{smallmatrix} 1&0\\0&t \end{smallmatrix}\right)\right]_{\phee_{B}},\left[\left(\begin{smallmatrix} t&0\\0&t \end{smallmatrix}\right)\right]_{\phee_{B}}\right\},\\
\mathcal{R}(\phee_{A})&=\left\{\left[\left(\begin{smallmatrix} 1&0\\0&1 \end{smallmatrix}\right)\right]_{\phee_{A}},\left[\left(\begin{smallmatrix} t&0\\0&1 \end{smallmatrix}\right)\right]_{\phee_{A}}\right\} \text{ and }\\
\mathcal{R}(\phee')&=\left\{\left[\left(\begin{smallmatrix} 1&0\\0&1 \end{smallmatrix}\right)\right]_{\phee'}\right\},
\end{align*}
where $\mathcal{R}(\phi)$ denotes the set of Reidemeister classes of the automorphism $\phi$.
\end{pps}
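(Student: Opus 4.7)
The plan is to reduce each of the three Reidemeister-class computations to solving a single type of additive equation
\[ s - a\, t^{N}\, s(t^{-1}) \;=\; -f \qquad \bigl(\, s, f \in R := \F_q[t,t^{-1}],\ N \in \Z \,\bigr) \]
over the abelian group $(R,+)$, and then to leverage the semidirect product decompositions $\PBplus_2(R) \cong \mbU_2(R) \rtimes R_{\mathrm{tf}}^\times$ and $B_2^+(R) \cong \mbU_2(R) \rtimes (R_{\mathrm{tf}}^\times)^2$, together with the commutator and conjugation relations~\bref{rel:commutators} and~\bref{rel:projectiveconjugation}, to climb from the unipotent radical to the full groups.

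First I would dispatch $\phee'$. Under the identification $\mbU_2(R) \cong \Addi(R)$, the map $\phee'$ reads $f \mapsto a\,f(t^{-1})$, so $f$ is $\phee'$-conjugate to $0$ precisely when $s - a\,s(t^{-1}) = -f$ has a solution. Expanding $s = \sum s_\ell t^\ell$, the coefficient of $t^\ell$ in the left-hand side is $s_\ell - a\,s_{-\ell}$; pairing $\ell$ with $-\ell$ yields a $2\times 2$ system of determinant $1 - a^2$ for every $\ell \neq 0$, and the scalar equation $(1-a)s_0 = -f_0$ for $\ell = 0$. Lemma~\ref{lemmaa} supplies the invertibility of both $1 - a^2$ and $1 - a$ in $\F_q$, so every $f$ is $\phee'$-conjugate to $0$, giving $R(\phee') = 1$.

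Next, for $\phee_A$, I would write an arbitrary element in normal form $\ekl{1,2}(f)[d_1(t^k)]$ with $f \in R$, $k \in \Z$, and conjugate by $h = \ekl{1,2}(s)[d_1(t^m)]$. A direct matrix computation, relying on~\bref{rel:projectiveconjugation}, should produce
\[ h \cdot \ekl{1,2}(f)[d_1(t^k)] \cdot \phee_A(h)^{-1} \;=\; \ekl{1,2}\!\bigl(\, s + t^{m} f - a\, t^{\,k+2m}\, s(t^{-1}) \,\bigr)\,[d_1(t^{\,k+2m})]. \]
The diagonal exponent changes only modulo $2$, giving at least the two classes represented by $[\mathbf{1}_2]$ and $[d_1(t)]$; to show these fibers do not split further I would fix $m = 0$ and solve $s - a\,t^k s(t^{-1}) = -f$ by the same pairing argument, now pairing indices $\ell \leftrightarrow k - \ell$. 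The analysis of $\phee_B$ is entirely parallel but with an extra diagonal coordinate, yielding the four parity classes $\mathbf{1}_2$, $d_1(t)$, $d_2(t)$, $d_1(t) d_2(t)$ and, within each fiber, the collapsing equation $s - a\, t^{\,k_1 - k_2}\, s(t^{-1}) = -f$.

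The hard part will be the pairing bookkeeping: one has to verify that for every integer shift $N$ the family of $2 \times 2$ subsystems coming from the pairs $\ell \leftrightarrow N - \ell$ stays invertible, and that the potentially fixed index $\ell = N/2$ (possible only when $N$ is even) still gives a solvable scalar equation. Both requirements reduce to the invertibility of $1 - a^2$ and $1 - a$, which is precisely why the hypothesis $q \geq 4$ is needed and what Lemma~\ref{lemmaa} provides.
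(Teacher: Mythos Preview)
Your proposal is correct and follows essentially the same route as the paper's proof: both reduce the Reidemeister-class count to solving an additive equation of the form $s - a\,t^{N} s(t^{-1}) = (\text{given})$ over $\F_q[t,t^{-1}]$, and both handle this by pairing coefficient indices $\ell \leftrightarrow N-\ell$ into $2\times 2$ systems with determinant $1-a^2$ (and the fixed index $\ell=N/2$, when it occurs, into the scalar equation with coefficient $1-a$), invoking Lemma~\ref{lemmaa} for solvability. The only cosmetic difference is that the paper works out $B_2^+$ first in full matrix notation and declares $\PBplus_2$ and $\mbU_2$ analogous, whereas you start with $\mbU_2$ and climb upward; the underlying computation is identical. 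One small point worth spelling out when you write it up: after fixing $m=0$ to kill the unipotent part you still need $[d_1(t^k)]\sim[d_1(t^{k\bmod 2})]$, which follows at once by taking $s=0$ and varying $m$.
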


\begin{proof} We show the result for $B_{2}^{+}(R)$, the proofs for $\PBplus_2(R)$ and for $\mbU_2(R)$ follow from entirely analogous arguments. 

Given elements of $B_{2}^{+}(R)$ of the form 
\[\left(\begin{smallmatrix} t^{k}&f(t)\\0&t^{\ell}\end{smallmatrix}\right)\text{ and }\left(\begin{smallmatrix} t^{x}&0\phantom{|}\\0&t^{y}\end{smallmatrix}\right),\] 
we have 
\[ \left(\begin{smallmatrix} t^{k}&f(t)\\0&t^{\ell}\end{smallmatrix}\right)\left(\begin{smallmatrix} t^{x}&0\phantom{|}\\0&t^{y}\end{smallmatrix}\right)\phee_{B}\left(\left(\begin{smallmatrix} t^{k}&f(t)\\0&t^{\ell}\end{smallmatrix}\right)\right)^{-1} = \left(\begin{smallmatrix} t^{2k+x}&t^{\ell+y}f(t)-at^{2k+\ell+x}f(t^{-1})\\0&t^{2\ell+y}\end{smallmatrix}\right).\]

Consequently, an element $b=\left(\begin{smallmatrix} t^{i}&h(t)\\0&t^{j}\end{smallmatrix}\right) \in B_{2}^{+}(R)$ 
belongs to the Reidemeister class $\left[\left(\begin{smallmatrix} t^{x}&0\\0&t^{y}\end{smallmatrix}\right)\right]_{\phee_B}$ if and only if the following hold:
\begin{enumerate}
    \item There exist $k,\ell \in \Z$ such that $i = 2k+x$ and $j = 2\ell+y$, and
    \item there exists $f(t) \in \F_q[t,t^{-1}]$ such that 
    \begin{equation}
\label{eq:hf}h(t)=t^{\ell+y}f(t)-at^{2k+\ell+x}f(t^{-1}).
\end{equation}
\end{enumerate}
We now show that such an $f(t)$ always exists for any arbitrary (but fixed) $x,y,k,\ell \in \Z$ and $h(t) \in \F_q[t,t^{-1}]$.

Write $h(t)=\sum_{m\in\Z}h_m t^m$, where only finitely many $h_m\in\F_q$ are non-zero. For each $m \in \Z$, consider the number $\lambda_m=2k+2\ell+x+y-m \in \Z$. We observe that there can be at most one $m \in \Z$ for which the equality $m = \lambda_m$ holds. Now start by defining $f_m$ in the following cases. 
\[f_m=\begin{cases} (1-a)^{-1}h_m, &\text{ if } m=\lambda_m, \\ 
0, &\text{ if } h_m=h_{\lambda_m}=0.
\end{cases}\]
In case it simultaneously holds that $m \neq \lambda_m$ {and} that $h_m$ and $h_{\lambda_m}$ are not {both} zero, we define each pair $(f_m,f_{\lambda_m}) \in \F_q^2$ to be a solution to the system 
\begin{align*} X-aY&=h_m\\
			  -aX+Y&=h_{\lambda_m}
\end{align*}
as in Lemma~\ref{lemmaa}. We remark that, since only finitely many of the coefficients $h_m \in \F_q$ are non-zero, there are only finitely many equations as above to be considered. 

By construction only finitely many coefficients $f_m$ are non-zero, i.e., $f(t)=\sum_{m \in \Z}f_mt^m$ is indeed an element of $\F_q[t,t^{-1}]$. Moreover,  
\begin{align*} t^{\ell+y}f(t)-at^{2k+\ell+x}f(t^{-1}) & = \sum_{m\in\Z} \left(f_mt^{m+\ell+y}-af_mt^{2k+\ell+x-m}\right)\\
& = \sum_{m\in\Z} (f_{m-\ell-y}-af_{2k+\ell+x-m})t^m\\
& = \sum_{m\in\Z} h_mt^m = h(t),
\end{align*}
hence equality~\bref{eq:hf} is satisfied for this choice of $f(t)$. 

The fact that such an $f(t) \in \F_q[t,t^{-1}]$ can always be constructed means that an arbitrary element $b=\left(\begin{smallmatrix} t^{i}&h(t)\\0&t^{j}\end{smallmatrix}\right) \in B_{2}^{+}(R)$ is contained in the Reidemeister class $\left[\left(\begin{smallmatrix} t^{x}&0\\0&t^{y}\end{smallmatrix}\right)\right]_{\phee_B}$ if and only if 
$i \equiv x \mod 2$ and $j \equiv y \mod 2$, without any further assumptions on $h(t)$. Therefore, 
\[\mathcal{R}(\phee_{B})=\left\{\left[\left(\begin{smallmatrix} 1&0\\0&1 \end{smallmatrix}\right)\right]_{\phee},\left[\left(\begin{smallmatrix} t&0\\0&1 \end{smallmatrix}\right)\right]_{\phee},\left[\left(\begin{smallmatrix} 1&0\\0&t \end{smallmatrix}\right)\right]_{\phee},\left[\left(\begin{smallmatrix} t&0\\0&t \end{smallmatrix}\right)\right]_{\phee}\right\},\]
as claimed.
\end{proof}

\section{Proof of Theorem \ref{thm:Lieapplication}, and final remarks} \label{sec:backtoarithmetic}

The promised theorem on $S$-arithmetic groups is now an obvious consequence of Propositions~\ref{pps:FptFptt-1OK} and~\ref{nori} together with known results from the literature.

\begin{proofof}{Theorem \ref{thm:Lieapplication}}
We define the ground field $\K_{p}$ (or $\K_{2,p}$) and the $S$-arithmetic groups $\Gamma_{n,p}$ (or $\til{\Gamma}_{n,p}$) case-by-case. 

If the chosen characteristic $p$ equals zero, set $\K_{0} = \Q(i)$, whose ring of integers is $\Z[i]$. 
For the arithmetic groups, take 
\[\Gamma_{2,0} = \mbU_2(\Z[i]) \rtimes_J C_3 \quad \text{ and } \quad  \Gamma_{n,0} = \PB_n(\Z[i]) \text{ for } n \geq 3,\]
where the generator of the cyclic part of $\Gamma_{2,0}$ acts on $\mbU_2(\Z[i]) \cong \Z^2$ via the matrix $J = \left( \begin{smallmatrix} -1 & 1 \\ -1 & 0 \end{smallmatrix} \right)$. The groups $\Gamma_{n,0} \leq \PB_n(\Q(i))$ with $n \geq 3$ are obviously arithmetic subgroups of $\PB_n(\Q(i))$, while $\Gamma_{2,0}$ is an arithmetic subgroup of $\PB_2(\Q(i))$ since it is commensurable with $\PB_2(\Z[i])$ via their common subgroup of finite index $\mbU_2(\Z[i])$. By Proposition~\ref{pps:FptFptt-1OK}, $\Gamma_{n,0}$ has property~$\Ri$ whenever $n \geq 4$. To see that $\{\infty\}\neq \RSpec(\Gamma_{2,0}) \neq \{1,\infty\}$ we draw from work of Dekimpe--Pennickx. More precisely, by (the proof of) \cite[Theorem~4.1]{KarelPenni}, the crystallographic group $\Gamma_{2,0} \cong \Z^2 \rtimes_J C_3$ admits an automorphism $\phee_D$ with finite Reidemeister number given by conjugation by the matrix $D = \left( \begin{smallmatrix} 0 & 1 \\ -1 & 1 \end{smallmatrix} \right)$. Since $D$ and $J$ commute, the map $\phee_D$ leaves the quotient $C_3$ invariant and thus $\phee_D$ has at least three twisted conjugacy classes. Thus $\RSpec(\Gamma_{2,0})$ is neither $\{\infty\}$ nor $\{1,\infty\}$. (The fact that all $\Gamma_{n,0}$ are finitely presented is well-known, but can also be seen directly from the fact that $\Gamma_{n,0}$ is virtually polycyclic.)

For the case in which the chosen characteristic $p$ is positive, we can set $\K_{p} = \F_p(t)$ for all $n$. Then $\Gamma_{n,p} = \PB_n(\F_p[t])$ gives, by Propositions~\ref{nori}\bref{nori.i} and~\ref{pps:FptFptt-1OK}, the required properties for the Reidemeister spectra. 
However, this choice yields groups $\PB_n(\F_p[t])$ that are \emph{non}-finitely generated, which is a well-known statement; see, e.g., \cite[Theorem~1.3]{YuriSoluble}. 

To find finitely generated $S$-arithmetic candidates satisfying the required properties in positive characteristic, we split into the cases $p \in \{2,3\}$ and $p \geq 5$. 
If the chosen characteristic $p$ is at least $5$, we take $\K_{p} = \F_p(t)$ and set 
\[ \Gamma_{2,p} = \PBplus_2(\F_p[t,t^{-1}]) \quad \text{ and } \quad \Gamma_{n,p} = \PB_n(\F_p[t,t^{-1}]) \text{ for } n \geq 3, \] 
noting that $\Gamma_{2,p}$ is commensurable with $\PB_2(\F_p[t,t^{-1}])$, and apply Propositions~\ref{nori}\bref{nori.ii} and~\ref{pps:FptFptt-1OK}.  
For $p \in \{2,3\}$ one needs to slightly vary the fields. In this case, set 
\[ \K_{2,p} = \F_{p^2}(t) \quad \text{ and } \quad \K_{p} = \F_p(t), \]
and take 
\[ \til{\Gamma}_{2,p} = \PBplus_2(\F_{p^2}[t,t^{-1}]) \quad \text{ and } \quad \til{\Gamma}_{n,p} = \PB_n(\F_p[t,t^{-1}]) \text{ for } n \geq 3. \] 
These groups are all finitely generated; see, e.g., \cite[Theorem~1.3]{YuriSoluble}. 
The proof concludes as in the previous paragraph. 
%
%
%
\end{proofof}

The proof of Theorem~\ref{thm:Lieapplication} relied on the flexibility of the definition of $S$-arithmetic groups, which allows one to pass over to suitable commensurable groups. We observe, for instance, that the arithmetic subgroup $\PB_2(\Z[i]) \leq \PB_2(\Q(i))$ {does} have property~$\Ri$, as can be seen from the classification of low-dimensional crystallographic groups with~$\Ri$; cf. \cite{KarelPenni}. And as discussed in Section~\ref{sec:newexampleswithoutRinfty} it is unknown, for example, whether $\PB_2(\F_5[t,t^{-1}])$ has property~$\Ri$, whence the descent to the subgroup of finite index $\PBplus_2(\F_5[t,t^{-1}])$.

On a similar token, to get a class-$2$ soluble, non-nilpotent, finitely generated $S$-arithmetic group $\Gamma$ with $\{\infty\} \neq \RSpec(\Gamma) \neq \{1,\infty\}$ and in characteristic two or three, we also needed to pass over to extensions of $\F_2(t)$ and $\F_3(t)$ since the groups $\PBplus_2(\F_2[t,t^{-1}]) \cong C_2 \wr \Z$ and $\PBplus_2(\F_3[t,t^{-1}]) \cong C_3 \wr \Z$ have~$\Ri$ as shown in~\cite{DacibergWongWreath}.



\section*{Acknowledgments}
PMLA was supported by the long term structural funding \emph{Methusalem grant} of the Flemish Government. YSR was partially supported by the German Research Foundation (DFG) through the Priority Program 2026 `Geometry at infinity'. 
Part of this project was inspired by a talk given by Peter Wong at the ICM 2018 Satellite Conference ``Geometric Group Theory'' in Campinas, Brazil. We thank him and Timur Nasybullov for helpful discussions. We are also indebted to Karel Dekimpe for valuable remarks, particularly for pointing us to a strengthening of Theorem~\ref{thm:Additive} and some corrections throughout an earlier draft. The authors thank the anonymous referee for a very careful reading of earlier versions of the paper and for the many suggestions for improvement and corrections.

 \def\cprime{$'$} \def\cprime{$'$}
 \providecommand{\bysame}{\leavevmode\hbox to3em{\hrulefill}\thinspace}
 \providecommand{\MR}{\relax\ifhmode\unskip\space\fi MR }

\printbibliography

\end{document}